\documentclass[12pt,a4paper, leqno]{article}
\usepackage{amssymb, amscd, amsthm, amsmath,latexsym, amstext,mathrsfs}
\usepackage[all]{xy}
\usepackage[dvips]{graphicx}

\newtheorem{thm}{Theorem}[section]
\newtheorem{coro}[thm]{Corollary}
\newtheorem{lemma}[thm]{Lemma}
\newtheorem{prop}[thm]{Proposition}
\newtheorem{conj}[thm]{Conjecture}
\newtheorem{Ques}[thm]{Question}

\theoremstyle{remark}
\newtheorem{remark}[thm]{\textbf{Remark}}

\theoremstyle{definition}

\numberwithin{equation}{thm}
\topmargin -0.3in 
\oddsidemargin -0.1in

\textwidth 6.2in \textheight 9in

\newcommand{\newpara}{\noindent\refstepcounter{thm}{\bf(\thethm)\;}}  


\newcommand{\Br}{\mathrm{Br}}
\newcommand{\rM}{\mathrm{M}}
\newcommand{\Int}{\mathrm{Int}}
\newcommand{\car}{\mathrm{char}}
\newcommand{\Nrd}{\mathrm{Nrd}}
\newcommand{\ind}{\mathrm{ind}}

\newcommand{\sR}{\mathscr{R}}

\newcommand{\bfSL}{\mathbf{SL}}
\newcommand{\bfSU}{\mathbf{SU}}
\newcommand{\bfSO}{\mathbf{SO}}
\newcommand{\bfU}{\mathbf{U}}
\newcommand{\bfSpin}{\mathbf{Spin}}

\newcommand{\GL}{\mathrm GL}
\newcommand{\Sn}{\mathrm{Sn}}
\newcommand{\rank}{\mathrm{rank}}
\newcommand{\disc}{\mathrm{disc}}
\newcommand{\fH}{\mathfrak{H}}
\newcommand{\Gal}{\mathrm{Gal}}

\newcommand{\End}{\mathrm{End}}

\newcommand{\oi}{\mathrm{i}}
\newcommand{\ii}{\mathrm{ii}}

\newcommand{\matr}[4]{\begin{pmatrix} {#1}& {#2}\\ {#3}&{#4}\end{pmatrix}}
\newcommand{\smatr}[4]{\bigl(\begin{smallmatrix} {#1}& {#2}\\ {#3}&{#4}\end{smallmatrix}\bigl)}

\newcommand{\al}{\alpha}
\newcommand{\veps}{\varepsilon}

\newcommand{\simto}{\xrightarrow{\sim}}

\newcommand{\lra}{\longrightarrow}
\newcommand{\set}[1]{\{\,{#1}\,\}}
\newcommand{\wt}[1]{\widetilde{#1}}

\begin{document}
\title{\textbf{Hasse Principle for Simply Connected Groups over Function Fields of Surfaces}}
\author{Yong HU}
\date{}

\maketitle


\


\begin{abstract}
Let $K$ be the function field of a $p$-adic curve, $G$ a semisimple simply connected group over $K$ and $X$
a $G$-torsor over $K$. A conjecture of Colliot-Th\'el\`ene, Parimala and Suresh predicts that if for every discrete valuation
$v$ of $K$, $X$ has a point over the completion $K_v$, then $X$ has a $K$-rational point. The main result  of this paper is the proof of this
conjecture for groups of some classical types. In particular, we prove the conjecture when $G$ is of one of the following types: (1) ${}^2A_n^*$, i.e.
$G=\mathbf{SU}(h)$ is the special unitary group of  some hermitian form $h$ over a pair $(D\,,\,\tau)$, where $D$ is a central division algebra of square-free index over a quadratic extension $L$ of $K$ and $\tau$ is an involution of the second kind on $D$ such that $L^{\tau}=K$; (2) $B_n$, i.e.,
$G=\mathbf{Spin}(q)$ is the spinor group of quadratic form of odd dimension over $K$; (3) $D_n^*$, i.e., $G=\mathbf{Spin}(h)$
is the spinor group of a hermitian form $h$ over a quaternion $K$-algebra $D$ with an orthogonal involution. Our method actually yields a parallel local-global
result over the fraction field of a $2$-dimensional, henselian, excellent local domain with finite residue field, under suitable assumption on
the residue characteristic.
\end{abstract}

\ \  {\bf MSC classes:} \ 11E72, 11E57

\section{Introduction}

Let $K$ be a field and $G$  a smooth connected linear algebraic
group over $K$. The cohomology set $H^1(K,\,G)$ classifies up to
isomorphism $G$-torsors over $K$, and a class $\xi\in H^1(K,\,G)$ is
trivial if and only if the corresponding $G$-torsor has a
$K$-rational point. Let $\Omega_K$ denote the set of (normalized)
discrete valuations (of rank 1) of the field $K$. For each
$v\in\Omega_K$, let $K_v$ denote the completion of $K$ at $v$. The
restriction maps $H^1(K,\,G)\to H^1(K_v,\,G)$, $v\in\Omega_K$ induce
a natural map of pointed sets
\[
H^1(K,\,G)\lra\prod_{v\in \Omega_K}H^1(K_v,\,G)\,.
\]If the kernel of this map is trivial, we say that the \emph{Hasse
principle} with respect to $\Omega_K$ holds for $G$-torsors over
$K$.

In the case of a $p$-adic function field, by which we mean the
function field of an algebraic curve over a $p$-adic field (i.e., a
finite extension of $\mathbb{Q}_p$), the following conjecture was made by
Colliot-Th\'el\`ene, Parimala and Suresh.

\begin{conj}[{\cite{CTPaSu}}]\label{conj1p1NEW}
Let $K$ be the function field of an algebraic curve over a $p$-adic
field and let $G$ be a semisimple simply connected group over $K$.

Then  the kernel of the natural map
\[
H^1(K,\,G)\lra\prod_{v\in\Omega_K}H^1(K_v,\,G)
\]is trivial. In other words, if a $G$-torsor has points in all
completions $K_v\,,\,v\in\Omega_K$, then it has a $K$-rational
point.
\end{conj}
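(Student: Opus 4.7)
My plan is to treat each of the three classical types via a common two-step strategy: first reduce the Hasse principle for $G$-torsors to a Hasse principle for the associated classical forms, then deal with the residual obstruction coming from a central isogeny.

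For each type, realize $G$ inside a central isogeny
\[
1 \lra \mu \lra G \lra \overline{G} \lra 1,
\]
where $\overline{G}$ is an adjoint-type group ($\bfSO(q)$ in the $B_n$ and $D_n^*$ cases, a quotient of $\bfU(h)$ in the $^2A_n^*$ case) and $\mu$ is a finite commutative $K$-group scheme. Given $\xi\in H^1(K,G)$ that is trivial in every $H^1(K_v,G)$, its image $\overline\xi \in H^1(K,\overline G)$ is also locally trivial and classifies a classical form (quadratic or hermitian) locally isometric to the reference form. The first step is a Hasse principle for such forms: in the $B_n$ case, this is the Hasse principle for quadratic forms over function fields of $p$-adic curves due to Colliot-Th\'el\`ene, Parimala and Suresh, resting on the computation $u(K)=8$; in the $D_n^*$ and $^2A_n^*$ cases, I would reduce via a Scharlau-type hermitian transfer to quadratic forms over $K$ or over the quadratic extension $L$, and then invoke the same principle.

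Once $\overline\xi = 0$, the class $\xi$ lies in the image of the connecting map $\overline{G}(K) \to H^1(K,\mu)$, which is the spinor-norm image in the orthogonal cases and a reduced-norm-type image in the unitary case. The second step is a Hasse principle for this subset of $H^1(K,\mu)$, concretely a subgroup of $K^{\times}/K^{\times 2}$ or of a quotient by reduced norms. I expect this to be the main obstacle: a general class in $K^{\times}/K^{\times 2}$ need not satisfy Hasse with respect to all discrete valuations, so one must crucially use that the class arises from a globally rational element of $\overline{G}(K)$. My approach would be to combine patching along a regular proper model of the $p$-adic curve, in the style of Harbater--Hartmann--Krashen, with Kato's exact sequence relating $H^n(K,\mathbb{Z}/2)$ to residues at divisorial valuations, thereby propagating local triviality at all completions to global triviality. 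The square-free index hypothesis on $D$ in the $^2A_n^*$ case is what keeps the reduced-norm analysis tractable, since it allows a decomposition along the distinct prime factors of $\ind(D)$.
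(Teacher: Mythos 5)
The statement you are asked about is a conjecture; the paper establishes it only for the classical types you list (plus ${}^1A_n^*$, $C_n^*$, $F_4^{red}$, $G_2$), so your restriction of scope is appropriate. Your two-step skeleton (classify the classical form, then kill the residual class coming from a central $\mu_2$ or reduced-norm-type kernel) matches the paper's framework in outline. But there are genuine gaps in how you propose to carry out each step.

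For step one, the "Scharlau-type hermitian transfer" you invoke does not exist for hermitian forms over a quaternion algebra $(D,\sigma)$ with an \emph{orthogonal} involution (the $D_n^*$ case): the trace form construction applies to hermitian forms over $(D,\tau_0)$ with $\tau_0$ symplectic, or over $(L,\iota)$ unitary, but not directly here. What the paper actually uses is the key exact sequence of Parimala--Sridharan--Suresh, which moves $h$ to a form over $(L,\iota)$, together with a careful tracking of discriminant, Clifford invariant and Rost invariant of the pieces, and then the Arason-invariant argument kicks in; none of this is a straightforward transfer. Likewise, in the ${}^2A_n^*$ even-index case the paper needs Suresh's exact sequence, a new Hasse principle for $H^4$ of the function field of the associated conic (Theorem~\ref{thm2p2TEMP} and Corollary~\ref{coro2p3TEMP}), and a delicate pfaffian-norm/$u$-invariant analysis; your sketch does not anticipate any of this. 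Even in the $B_n$ case the relevant input is not the isotropy Hasse principle but injectivity of $I^3(K)\to\prod_v I^3(K_v)$, which comes from Kato's $H^3$-injectivity, not from $u(K)=8$. For step two, the paper does not propagate local triviality via HHK-style patching; the real mechanism is that for rank $\ge 5$ the spinor norm map is \emph{surjective} (so there is no obstruction, via $u(K)=8$), while for low rank the obstruction is identified with a reduced-norm cokernel and killed via Suslin's theorem plus Kato's $H^3$-injectivity; for $D_n^*$ and ${}^2A_n^*$ this further relies crucially on Merkurjev's norm principle for spinor and reduced norms. Your plan to combine patching with Kato's exact sequence to propagate local triviality of a spinor-norm class is not how the argument runs and, without the surjectivity and norm-principle inputs, would not close.
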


\newpara\label{para1p2NEWTEMP}
Let $K$ be a $p$-adic function field with field of  constants  $F$,
i.e.,  $K$ is the function field of a smooth projective
geometrically integral curve over the $p$-adic field $F$. Let $A$ be
the ring of integers of $F$. It is in particular a henselian
excellent local domain of dimension 1.  By resolution of
singularities, there exists a proper flat morphism $\mathcal{X}\to
\mathrm{Spec} A$, where $\mathcal{X}$ is a connected regular 2-dimensional
scheme with function field $K$. We will say that $\mathcal{X}$ is a
$p$-adic \emph{arithmetic surface} with function field $K$, or that
$\mathcal{X}\to\mathrm{Spec} A$ is a \emph{regular proper model} of the
$p$-adic function field $K$.

An analog in the context of a 2-dimensional base is as follows. Let
$A$ be a henselian excellent 2-dimensional local domain with
\emph{finite} residue field $k$ and let $K$ be the field of
fractions of $A$. Again by resolution of singularities, there exists
a proper birational morphism $\mathcal{X}\to \mathrm{Spec} A$, where
$\mathcal{X}$ is a connected regular 2-dimensional scheme with
function field $K$. We will say that $\mathrm{Spec} A$ is a \emph{local
henselian surface} with function field $K$ and that
$\mathcal{X}\to\mathrm{Spec} A$ is a \emph{regular proper model} of $\mathrm{Spec}
A$.

Experts have also been interested in the following analog of
Conjecture$\;$\ref{conj1p1NEW}:

\begin{Ques}\label{ques1p2NEW}
Let $K$ be the function field of a local henselian surface $\mathrm{Spec} A$
with finite residue field and let $G$ be a semisimple simply
connected group over $K$.

Does the Hasse principle with respect to $\Omega_K$ hold for
$G$-torsors over $K\,?$
\end{Ques}

Let $K$ be the function field of a $p$-adic arithmetic surface or a
local henselian surface with finite residue field. For most
quasi-split $K$-groups, the Hasse principle may be proved by
combining an injectivity property of the Rost invariant map (cf.
\cite[Thm.$\;$5.3]{CTPaSu}) and results from higher dimensional
class field theory of Kato and Saito.

The goal of this paper is to prove the Hasse principle for groups of
several  types in the non-quasisplit case. To give precise statement
of our main result, we will refine the usual classification of
absolutely simple simply connected groups in some cases.

\

\newpara\label{para1p4NEWTEMP}
Let $E$ be a field and let $G$ be an absolutely simple simply
connected group over $E$. We say that  $G$ is of type

(1) ${}^1A_n^*$, if $G=\bfSL_1(A)$ is the special linear group of
some central simple $E$-algebra $A$ \emph{of square-free index};

(2) ${}^2A_n^*$, if $G=\bfSU(h)$ is the special unitary group of
some nonsingular hermitian form $h$ over a pair $(D,\,\tau)$, where
$D$ is a central division algebra \emph{of square-free index}
over a separable quadratic field extension $L$ of $E$ and $\tau$ is
an involution of the second kind on $D$ such that $L^{\tau}=E$; when the index of division algebra $D$
is odd (resp. even), we say the group $G=\bfSU(h)$ is of type ${}^2A_n^*$ of odd (resp. even) index;

(3) $C_n^*$, if $G=\bfU(h)$ is the unitary group (also called
symplectic group) of a nonsingular hermitian form $h$ over a pair
$(D,\,\tau)$, where $D$ is \emph{quaternion algebra} over $E$ and
$\tau$ is a symplectic involution on $D$;

(4) $D_n^*$ (in characteristic $\neq 2$), if $G=\bfSpin(h)$ is the
spin group of a nonsingular hermitian form $h$ over a pair
$(D,\,\tau)$, where $D$ is \emph{quaternion algebra} over $E$ and
$\tau$ is an orthogonal involution on $D$;

(5) $F_4^{red}$ (in characteristic different from $2,\,3$), if
$G=\mathbf{Aut}_{alg}(J)$ is the group of algebra automorphisms of
some \emph{reduced} exceptional Jordan $E$-algebra $J$ of dimension
27.

Recall also that $G$ is of type

(6) $B_n$ (in characteristic $\neq 2$), if $G=\bfSpin(q)$ is the
spin group of a nonsingular quadratic form $q$ of dimension $2n+1$
over $E$;

(7) $G_2$ (in characteristic $\neq 2$), if $G=\mathbf{Aut}_{alg}(C)$
is the group of algebra automorphisms of a Cayley algebra $C$ over
$E$.

\

\newpara\label{para1p5NEWTEMP}
In the local henselian case, we shall exclude some possibilities for
the residue characteristic. To this end, we define for any
semisimple simply connected group $G$ a set $S(G)$ of prime numbers
as follows (cf. \cite[$\S$2.2]{Ser94} or \cite[p.44]{Gille10}):

$S(G)=\set{2}$, if $G$ is of type $G_2$ or of classical
type $B_n,\,C_n$ or $D_n$ (trialitarian $D_4$ excluded);

$S(G)=\set{2\,,\,3}$, if $G$ is of type $E_6,\,E_7,\,F_4$ or
trialitarian $D_4$;

$S(G)=\set{2\,,\,3\,,\,5}$, if $G$ is of type $E_8$;

$S(G)$ is the set of prime factors of the index $\ind(A)$ of $A$, if
$G=\bfSL_1(A)$ for some central simple algebra $A$;

$S(G)$ is the set of prime factors of $2.\ind(D)$, if $G=\bfSU(h)$
for some nonsingular hermitian form $h$ over a division algebra $D$
with an involution of the second kind.

In the general case, define $S(G)=\cup S(G_i)$, where $G_i$ runs
over the almost simple factors of $G$.

When $G$ is absolutely simple, let $n_G$ be the order of the Rost invariant of $G$. Except for a few cases where
$n_G=1$, the set $S(G)$ coincides with the set of prime factors of $n_G$ (cf.
\cite[Appendix$\;$B]{GMS} or \cite[$\S$31.B]{KMRT}).

\

We summarize our  main results in the following two theorems.

\begin{thm}\label{thm1p3NEW}\footnote{
R. Preeti \cite{Preeti} has proved results on the injectivity of the Rost invariant which overlap with
the results in this paper. Our work was carried out independently.}
Let $K$ be the function field of a $p$-adic arithmetic surface and $G$ a semisimple simply connected group over $K$. Assume $p\neq 2$ if $G$ contains an almost simple factor of type ${}^2A_n^*$ of even index.

If every almost simple factor of $G$ is of type
\[
{}^1A_n^*\,,\,{}^2A^*_n\,,\,B_n\,,\,C_n^*\,,\,D_n^*\,,\,F_4^{red}\,\,\text{
or }\;\;G_2\,,
\]then the natural map
\[
H^1(K,\,G)\lra \prod_{v\in\Omega_K}H^1(K_v\,,\,G)
\]has a trivial kernel.
\end{thm}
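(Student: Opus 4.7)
The plan is to reduce Theorem \ref{thm1p3NEW}, type by type, to an injectivity statement for the Rost invariant modulo invariants of lower degree, and then to apply the local-global principle for $H^3(K,\mathbb{Q}/\mathbb{Z}(2))$ over a $p$-adic function field (essentially \cite[Thm.$\;$5.3]{CTPaSu} together with Kato--Saito class field theory). First I would reduce to the case of $G$ absolutely simple by decomposing $G$ into almost simple factors of the form $R_{L/K}G'$ with $L/K$ finite separable and $G'$ absolutely simple simply connected; Shapiro's lemma identifies $H^1(K,R_{L/K}G')$ with $H^1(L,G')$ and matches the completions, and $L$ remains the function field of a $p$-adic arithmetic surface (after normalizing a regular proper model in $L$).

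Next, for each of the seven types I would identify a complete set of cohomological invariants whose local-global behaviour can be analyzed via Kato--Saito. For $G_2$, the Rost invariant $e_3\in H^3(K,\mathbb{Z}/2)$, coming from the $3$-Pfister norm form, already classifies Cayley algebras. For $F_4^{red}$, the pair $(f_3,f_5)\in H^3\times H^5$ classifies reduced Albert algebras. For $B_n=\bfSpin(q)$ one uses the discriminant, Clifford invariant, and Arason invariant $e_3$ of $q$. For ${}^1A_n^*=\bfSL_1(A)$ with $\ind(A)$ square-free, $H^1(K,G)=K^*/\Nrd(A^*)$ embeds into $H^3(K,\mathbb{Q}/\mathbb{Z}(2))$ via cup product with $[A]\in\Br(K)$ by Merkurjev--Suslin. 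For $C_n^*$, $D_n^*$ and ${}^2A_n^*$, one works with hermitian forms over a quaternion algebra (resp.\ over a division algebra with involution of the second kind) and uses invariants constructed by Bayer-Fluckiger--Parimala, Lewis--Tignol, and (in the most delicate case) Preeti. In every instance the vanishing of the Hasse kernel is reduced to a local-global statement in some $H^i(K,\mu_n^{\otimes j})$ with $i\leq 5$, which follows from Kato--Saito for $i\leq 3$ and from cohomological-dimension considerations for $i=5$.

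The main obstacle is the ${}^2A_n^*$ case of even index. Here the Rost invariant lives, up to odd-primary terms, in the $2$-torsion of $H^3(K,\mathbb{Q}/\mathbb{Z}(2))$, and the Kato--Saito local-global theorem for $H^3(K,\mathbb{Z}/2(2))$ requires $2$ to be invertible in the residue field --- whence the hypothesis $p\neq 2$. In addition one needs a classification of hermitian forms over $(D,\tau)$ with $\tau$ of the second kind and $D$ of square-free even index; I would obtain this via a reduction to quadratic forms after passing to the fixed field of $\tau$, followed by a residue/patching analysis of the Rost invariant modelled on \cite{CTPaSu}. The other classical cases are considerably softer: $C_n^*$ and $D_n^*$ only require hermitian forms over a quaternion algebra, and ${}^2A_n^*$ of odd index entirely avoids the $2$-primary subtleties.
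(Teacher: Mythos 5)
Your high-level plan (reduce to absolutely simple factors via Weil restriction and Shapiro, then treat each type separately using cohomological invariants and Kato--Saito) matches the paper's strategy, but there are two genuine gaps that would block a proof written along the lines you describe.

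First, the statement that ``in every instance the vanishing of the Hasse kernel is reduced to a local-global statement in some $H^i(K,\mu_n^{\otimes j})$ with $i\le 5$, which follows from Kato--Saito for $i\le 3$ and from cohomological-dimension considerations for $i=5$'' does not cover what is actually needed for ${}^2A_n^*$ of even index. After applying Suresh's exact sequence (the paper's tool, not a ``reduction to quadratic forms over the fixed field of $\tau$'', which is $K$ itself since $\tau$ is unitary), one is forced to pass to the function field $K(C)$ of a conic over $K$ and establish a Hasse principle for $I^4(K(C))\cong H^4(K(C),\mathbb{Z}/2)$. The field $K(C)$ has $\mathrm{cd}_2=4$, so this group does not vanish by dimension reasons, and it is not directly covered by Kato--Saito applied to $K$. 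The paper has to prove this local-global principle for $H^4$ of function fields of conics from scratch (via a Hochschild--Serre argument reducing to $H^3$ of $K$ with coefficients in $\overline{K}(C)^*\otimes\mathbb{Q}_2/\mathbb{Z}_2(2)$, plus a surjectivity argument in weight-two $K$-theory). Your sketch does not supply anything like this, and without it the even-index ${}^2A_n^*$ case does not close. You also need a Hasse principle for rank-$6$ quadratic forms over $K$ at this step, another input you do not mention.

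Second, for $B_n$ and $D_n^*$ the discriminant, Clifford and Arason/Rost invariants classify the form only up to the spinor-norm ambiguity: the relevant $H^1(K,\bfSpin)$ sits in an exact sequence with a $\frac{K^*/K^{*2}}{\Sn(\cdot)}$ term on the left, and after matching the higher invariants one must still prove a Hasse principle for the image of the spinor norm map. The paper handles this using Merkurjev's norm principle for spinor norms, together with the computation $u(K)=8$ (Parimala--Suresh/Leep for the arithmetic case) to show the spinor norm map is surjective for forms of rank $\geq 5$ (resp.\ hermitian rank $\geq 3$ in the $D_n^*$ case), plus reduced-norm Hasse principles for the low-rank cases. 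This is not ``an invariant of lower degree'' that folds automatically into the Kato--Saito input; it is a separate, substantive step that your outline elides.

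A minor point: you attribute the $p\neq 2$ hypothesis in the even-index ${}^2A_n^*$ case to Kato--Saito requiring $2$ invertible, but Kato's theorem for function fields of $p$-adic curves has no such restriction. In the $p$-adic arithmetic case the restriction in the paper comes in through the auxiliary rank-$6$ quadratic form Hasse principle, not from class field theory.
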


\begin{thm}\label{thm1p4NEWv2}
Let $K$ be the function field of a local henselien surface with finite residue field of characteristic
$p$. Let $G$ be a semisimple simply connected group over $K$. Assume
$p\notin S(G)$.

If every almost simple factor of $G$ is of type
\[
{}^1A_n^*\,,\,{}^2A^*_n \text{ of odd index}\,,\,\,B_n\,,\,C_n^*\,,\,D_n^*\,,\,F_4^{red}\,\,\text{
or }\;\;G_2\,,
\]then the natural map
\[
H^1(K,\,G)\lra \prod_{v\in\Omega_K}H^1(K_v\,,\,G)
\]has a trivial kernel.

If moreover  the Hasse principle with respect to $\Omega_K$ holds for quadratic forms $q$ of rank $6$ over $K ($i.e., $q$ has a nontrivial zero over $K$
if and only if it has a nontrivial zero over every $K_v\,,\,v\in\Omega_K)$, then the same result is also true for an absolutely simple group
of type ${}^2A_n^*$ of even index.
\end{thm}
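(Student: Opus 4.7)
The plan is to follow the same overall architecture as the proof of Theorem~\ref{thm1p3NEW}, replacing tools from the $p$-adic arithmetic surface setting by their analogues over the fraction field $K$ of a local henselian surface. Since $H^1(K,G_1\times G_2)=H^1(K,G_1)\times H^1(K,G_2)$ and the Hasse principle assertion is compatible with products, I would first reduce to the case where $G$ is absolutely simple. For each allowed type, the proof combines two ingredients: (a) the classification of $H^1(K,G)$ by explicit cohomological invariants (Brauer classes, discriminants, Hasse--Witt invariants, Pfister forms, Rost invariant), and (b) a local-global principle with respect to $\Omega_K$ for those invariants, which in the henselian-surface setting typically rests on Kato--Saito class field theory and on Parimala--Suresh-type results for the Brauer group and for quadratic forms. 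The hypothesis $p\notin S(G)$ is what makes these auxiliary local-global statements available: it forces $p$ to be prime to the index of every division algebra in sight, and in particular $p\neq 2$ in all classical cases.

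For the classical types $B_n$, $C_n^*$, $D_n^*$ and ${}^2A_n^*$ of odd index, a torsor class in $H^1(K,G)$ is controlled by a quadratic or hermitian form sharing the relevant invariants of the base form. In the $B_n$ case this is the comparison of two quadratic forms of dimension $2n+1$ with equal discriminant and equal Hasse--Witt class; in the $C_n^*$ and $D_n^*$ cases, by hermitian forms over a quaternion algebra with symplectic resp.\ orthogonal involution; in the odd-index ${}^2A_n^*$ case, by hermitian forms over $(D,\tau)$ where $D$ decomposes as a tensor product of odd prime-degree algebras (since $\ind(D)$ is odd and square-free), which avoids any characteristic-$2$ obstruction. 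In every case the injectivity of the Rost invariant modulo such invariants, combined with the local-global principle for quadratic and hermitian forms over $K$ (valid because $p\notin S(G)$), yields triviality of the kernel. The ${}^1A_n^*$ case is handled by splitting off a prime factor of $A$ using the square-free-index hypothesis and reducing to local-global for $\Br(K)$.

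The exceptional cases $G_2$ and $F_4^{red}$ are treated by Serre's classification: $G_2$-torsors correspond to $3$-fold Pfister forms, and $F_4^{red}$-torsors to a triple $(f_3,g_3,f_5)$ of cohomological invariants. Local-global for Pfister forms over $K$, together with the local-global statement for the mod-$3$ invariant (again under the hypothesis $p\notin S(G)$, so in particular $p\neq 2,3$), finishes these two types. All of these reductions are parallel to the $p$-adic case: only the auxiliary local-global ingredients change.

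The main obstacle, and the reason for the extra hypothesis in the second assertion, is the even-index type ${}^2A_n^*$. Here $D$ has even square-free index, so contains a quaternion factor, and the descent of the Rost invariant through the classification of hermitian forms with involution of the second kind produces an additional obstruction that cannot be killed purely by discriminant and Brauer-group arguments. A standard manipulation with hermitian forms over $(D,\tau)$ shows that this residual obstruction is equivalent to the isotropy of a suitably chosen quadratic form of rank $6$ over $K$ (an Albert-type trace form associated to a quaternion subalgebra of $D$). Consequently, assuming the Hasse principle for rank-$6$ quadratic forms over $K$ is exactly what one needs to close the argument in this case. This last reduction—showing that the obstruction really lies in rank $6$ and not higher—is the most delicate step, and is where I expect the bulk of the technical work to go.
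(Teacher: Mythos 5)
Your overall plan is sound and correctly identifies the right decomposition and the crucial role of the rank-$6$ hypothesis, but there are two genuine gaps.

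First, for the spinor groups ($B_n$ and $D_n^*$), showing that the class $\xi\in H^1(K,\bfSpin(h))$ maps to the trivial class in $H^1(K,\bfSU(h))$ is only half the argument. One must still kill the remaining piece of $\xi$ coming from $H^1(K,\mu_2)$, i.e.\ show that a locally trivial element of $\frac{K^*/K^{*2}}{\Sn(h_K)}$ is globally trivial. Your proposal silently skips this step. In the paper this requires a separate local-global result for spinor norms (Propositions 4.2 and 5.1), which ultimately rests on Merkurjev's norm principle and, for rank $\ge 5$, on the fact that $u(K)=8$ (so the spinor norm map is surjective). Without this ingredient the proof is incomplete even in the quadratic form case $B_n$. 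The same concern arises for the $R^1_{L/K}\mathbb{G}_m$ kernel in the $\bfSU(h)$ cases, where the paper invokes an analogous reduced-norm analysis (Lemma 6.2).

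Second, you drastically underestimate the technical content of the even-index ${}^2A_n^*$ case. Calling the reduction to a rank-$6$ quadratic form a ``standard manipulation'' misses what is arguably the main new technical input of the paper in the henselian setting. The argument uses Suresh's exact sequence
\[
W(L)\overset{\wt{\pi}_1}{\lra}W(D_0,\tau_0)\overset{\wt{\rho}}{\lra}W(D,\tau)\overset{p_2}{\lra}W^{-1}(D_0,\tau_0)\,,
\]
together with a \emph{new} Hasse principle for $H^4$ of the function field of the conic $C$ associated to $D_0$ (Theorem 6.8, proved via the Hochschild--Serre spectral sequence and Lemma 6.7) to show that $p_2(h)=0$; then the injectivity result of Parimala--Sridharan--Suresh for $W^{-1}(D_0,\tau_0)\to W^{-1}(D_0\otimes K(C),\tau_0)$; and finally a careful pfaffian-norm analysis (Lemmas 6.5--6.7) to descend the hyperbolicity of $h$ to the image of $\wt{\pi}_1$. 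It is only after all this that the rank-$6$ form appears. Your outline correctly predicts \emph{where} the rank-$6$ hypothesis enters, but none of the machinery making that step possible is present, and it is not a straightforward elaboration of the other cases.
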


In fact, it suffices to consider only divisorial discrete valuations
in the above theorems.

\begin{remark}\label{remark1p8v2}
Let $K$ be as in Theorem$\;$\ref{thm1p3NEW} or \ref{thm1p4NEWv2}. Assume the residue characteristic $p$ is not 2.

(1) By \cite[Thm.$\;$3.4]{Salt97} (the arithmetic case) and \cite[Thm.$\;$3.4]{Hu11} (the local henselian case),
a central division algebra of exponent 2 over the field $K$ is either a quaternion algebra or a biquaternion algebra. So
for a group of type $C_n$, say $G=\bfU(h)$ with $h$ a hermitian form over a symplectic pair $(D,\,\tau)$,  the only case not covered by our theorems is the case where $D$  is a biquaternion algebra. Similarly, for a group of classical type $D_n$, say $D=\bfSpin(h)$ with $h$ a hermitian form over an orthogonal pair
$(D,\,\tau)$, the only remaining case is the one with $D$ a biquaternion algebra.

(2) In Theorem$\;$\ref{thm1p4NEWv2}, the hypothesis on the Hasse principle for quadratic forms of rank 6 is satisfied if
$K=\mathrm{Frac}(\mathcal{O}[\![t]\!])$ is the fraction field of a formal power series ring over a complete discrete valuation ring
$\mathcal{O}$ (whose residue field is finite), by \cite[Thm.$\;$1.2]{Hu10}. In the arithmetic case, this is established in \cite[Thm.$\;$3.1]{CTPaSu}.
\end{remark}

In the rest of the paper, after some preliminary reviews in  Section$\;$\ref{sec2},  we will prove our main theorems case by case: the cases ${}^1A_n^*$, $C_n$, $F_4^{red}$ and $G_2$ in Section$\;$\ref{sec3}; the cases $B_n$ and $D_n^*$ in Sections$\;$\ref{sec4} and \ref{sec5}; and the case ${}^2A_n^*$ in Section$\;$\ref{sec6}.

Our proofs use ideas from Parimala and Preeti's paper \cite{PaPr}. In particular, two exact sequences of Witt groups, due to
Parimala--Sridharan--Suresh and Suresh respectively, play a special role in some cases. Other important
ingredients include Hasse principles for degree 3 cohomology of
$\mathbb{Q}/\mathbb{Z}(2)$ coming from higher dimensional class field theory of Kato
and Saito (cf. \cite{Ka86} and \cite{Sai87}), as well as the work of
Merkurjev and Suslin on reduced norm criterion and norm principles
(\cite{Suslin85}, \cite{Mer96}). For spinor groups and groups of type ${}^2A_n^*$ of even index,  we also make use of
results on quadratic forms over the base field $K$ obtained in
\cite{PaSu}, \cite{Le10} (see also \cite{HHK}) in the $p$-adic case
and in \cite{Hu11} in the local henselian case.

\section{Some reviews and basic tools}\label{sec2}

In this section, we briefly review some basic notions which will be used
frequently and we recall some known results that are essential in the
proofs to come later.

Throughout this section, let $L$ denote a field of characteristic
different from 2.

\subsection{Hermitian forms and Witt groups}

We will assume the readers have basic familiarity with the theory of
involutions and hermitian forms over central simple algebras (cf.
\cite{Schar}, \cite{Knus}, \cite{KMRT}). For later use, we recall in
this subsection some facts on Witt groups, the ``key exact
sequence'' of Parimala, Sridharan and Suresh  and the exact sequence of Suresh. The readers are referred to \cite[$\S$3 and
Appendix$\;$2]{BP1}, \cite[$\S$3]{BP2} and \cite[$\S$8]{PaPr} for more information.

Unless otherwise stated, all hermitian forms and skew-hermitian forms
(in particular all quadratic forms) in this paper are assumed to be
nonsingular.

\

\newpara\label{para2p1NEW} Let $L$ be a field of characteristic
different from 2, $A$ a central simple algebra over $L$ and $\sigma$
an involution on $A$. Let $E=L^{\sigma}$. We say that $\sigma$ is an
$L/E$-\emph{involution} on $A$. To each hermitian or skew-hermitian
form $(V,\,h)$ over $(A,\,\sigma)$, one can associate an involution
on $\End_A(V)$, called the \emph{adjoint involution} on $\End_A(V)$
with respect to $h$. This is the unique involution $\sigma_h$ on
$\End_A(V)$ such that
\[
h(x,\,f(y))=h(\sigma_h(f)(x)\,,\,y)\,,\quad\forall\;x,\,y\in
V\,,\;\;\forall\;f\in\End_A(V)\,.
\]

For a fixed finitely generated right $A$-module $V$, define an
equivalence relation $\sim$ on the set of hermitian or
skew-hermitian forms on $V$ (with respect to the involution
$\sigma$)  by
\[
h\,\sim\, h'\;\iff\; \text{there exists }\;\lambda\in E^* \text{
such that }h=\lambda.h'\,.
\]Let $\mathcal{H}^+(V)$ (resp. $\mathcal{H}^-(V)$) denote the set of
equivalence classes of hermitian (resp. skew-hermitian) forms on $V$
and let $\mathcal{H}^{\pm}(V)=\mathcal{H}^+(V)\cup\mathcal{H}^-(V)$.
The assignment $h\mapsto \sigma_h$ defines a map from
$\mathcal{H}^{\pm }(V)$ to the set of involutions on $\End_A(V)$. If
$\sigma$ is of the first kind, then the map $h\mapsto \sigma_h$
induces a bijection between $\mathcal{H}^{\pm}(V)$ and the set of
involutions of the first kind on $\End_A(V)$, and the involutions
$\sigma_h$ and $\sigma$ have the same type (orthogonal or
symplectic) if $h$ is hermitian and they have opposite types if $h$
is skew-hermitian. If $\sigma$ is of the second kind, then the map
$h\mapsto \sigma_h$ induces a bijection between $\mathcal{H}^+(V)$
and the set of $L/E$-involutions on $\End_A(V)$. (cf. \cite[p.43,
Thm.$\;$4.2]{KMRT}.)

If $A=L$ and $\sigma=\mathrm{id}$, a hermitian (resp. skew-hermitian) form
$h$ is simply a symmetric (resp. skew-symmetric) bilinear form $b$.
In this case, $b\mapsto \sigma_b$ defines a bijection between
equivalence classes of nonsingular symmetric or skew-symmetric
bilinear forms on $V$ modulo multiplication by a factor in $L^*$ and
involutions of the first kind on $\End_L(V)$. If $q$ is the
quadratic form associated to a symmetric bilinear form $b$, we also
write $\sigma_q$ for the adjoint involution $\sigma_b$.

\

\newpara\label{para2p2NEW} Let $(A,\,\sigma)$ be a pair consisting of a central simple algebra $A$ over a field $L$
of characteristic $\neq 2$ and an involution (of any kind) $\sigma$
on $A$. The orthogonal sum of hermitian forms defines a semigroup
structure on the set of isomorphism classes of hermitian forms over
$(A,\,\sigma)$. The quotient of the corresponding Grothendieck group
by the subgroup generated by hyperbolic forms is called the
\emph{Witt group} of $(A,\,\sigma)$ and denoted
$W(A,\,\sigma)=W^1(A,\,\sigma)$. The same construction applies to
skew-hermitian forms and the corresponding Witt group will be
denoted $W^{-1}(A,\,\sigma)$.

If $A=L$ and $\sigma=\mathrm{id}$, then $W(A,\,\sigma)$ is the usual Witt
group $W(L)$ of quadratic forms (cf. \cite{Lam}, \cite{Schar}). One
has a ring structure on $W(L)$ induced by the tensor product of
quadratic forms. The classes of  even dimensional forms form an
ideal $I(L)$ of the ring $W(L)$. For each $n\ge 1$, we write
$I^n(L)$ for the $n$-th power of the ideal $I(L)$. As an abelian
group, $I^n(L)$ is generated by the classes of $n$-fold Pfister
forms.

\

\newpara\label{para2p3NEW} Let $D$ be a quaternion division algebra
over a field $L$ of characteristic $\neq 2$. Let $\tau_0$ be the
standard (symplectic) involution on $D$. The Witt group
$W(D,\,\tau_0)$ has a nice description as follows (cf.
\cite[p.352]{Schar}).

If $h: V\times V\to D$ is a hermitian form over $(D,\,\tau_0)$, then
the map
\[
q_h\,:V\lra L\,,\quad q_h(x):=h(x,\,x)
\]defines a quadratic form on the $L$-vector space $V$, called the
\emph{trace form} of $h$. If $h$ is isomorphic to the diagonal form
$\langle \lambda_1\,,\dotsc, \lambda_r\rangle$, then $q_{h}$ is
isomorphic to the form $\langle \lambda_1\,,\dotsc,
\lambda_r\rangle\otimes n_D$, where $n_D$ denotes the norm form of
the quaternion algebra $D$. By \cite[p.352, Thm.$\;$10.1.7]{Schar},
the assignment $h\mapsto q_h$ induces an injective group
homomorphism $W(D,\,\tau_0)\to W(L)$, whose image is the principal
ideal of $W(L)$ generated by (the class of) the norm form $n_D$ of
$D$. In particular, two hermitian forms over $(D,\,\tau_0)$ are
isomorphic if and only if their trace forms are isomorphic.

\

\newpara\label{para2p4NEW} Let $L/E$ be a quadratic extension of
fields of characteristic different from 2. The nontrivial element
$\iota$ of the Galois group $\Gal(L/E)$ may be viewed as a unitary
involution on the $L$-algebra $A=L$. The Witt group $W(L,\,\iota)$
can be determined as follows (cf. \cite[pp.348--349]{Schar}):

As in (\ref{para2p3NEW}), to each hermitian form $h: V\times V\to L$
over $(L,\,\iota)$, one can associate a quadratic form  $q_h$ on the
$E$-vector space $V$, called the \emph{trace form} of $h$, by
defining
\[
q_h(x):=h(x,\,x)\in E\,,\,\forall\;x\in V\,.
\]One can show that $h\mapsto q_h$ induces a group homomorphism $W(L,\,\iota)\to W(E)$
which identifies $W(L,\,\iota)$ with the kernel of the base change
homomorphism $W(E)\to W(L)$. In particular, two hermitian forms over
$(L,\,\iota)$ are isomorphic if and only if their trace forms are
isomorphic. (cf. \cite[Thm.$\;$10.1.2]{Schar}.)

Let $\delta\in E$ be an element such that $L=E(\sqrt{\delta})$. Then
for  $a\in E^*$, the trace form of $h=\langle a\rangle$ is
isomorphic to $\langle a\,,\,-a\delta\rangle=a.\langle
1\,,\,-\delta\rangle$. So the image of the map
\[
W(L\,,\,\iota)\lra W(E)\,;\quad h\mapsto q_h
\]is the principal ideal generated by the form $\langle 1\,,\,-\delta\rangle$ (cf.
\cite[Remark$\;$10.1.3]{Schar}).

\

\newpara\label{para2p5NEW} Let $A$ be a central simple algebra over
a field $L$ of characteristic $\car(L)\neq 2$. Let $\sigma$ be an
involution on $A$ and let $E=L^{\sigma}$. For any invertible element
$u\in A^*$, let $\Int(u): A\to A$ denote the inner automorphism
$x\mapsto u.x.u^{-1}$. If $\sigma(u)u^{-1}=\pm 1$, then
$\Int(u)\circ \sigma$ is an involution on $A$ of the same kind as
$\sigma$.

Conversely, let $\sigma,\,\tau$ be involutions of the same kind on
$A$. If $\sigma$ and $\tau$ are of the first kind, then there is a
unit $u\in A^*$, uniquely determined up to a scalar factor in $E^*$,
such that $\tau=\Int(u)\circ \sigma$ and $\sigma(u)=\pm u$.
Moreover, the two involutions $\sigma$ and $\tau=\Int(u)\circ
\sigma$ are of the same type (orthogonal or symplectic) if and only
if $\sigma(u)=u$. If $\sigma$ and $\tau$ are of the second kind,
then there exists a unit $u\in A^*$, uniquely determined up to a
scalar factor in $E^*$, such that $\tau=\Int(u)\circ \sigma$ and
$\sigma(u)=u$.

Let $\fH(A,\,\sigma)=\fH^1(A,\,\sigma)$ (resp.
$\fH^{-1}(A,\,\sigma)$) denote the category of hermitian (resp.
skew-hermitian) forms over $(A,\,\sigma)$. Let
$\veps,\,\veps'\in\{\pm 1\}$. Let $a\in A^*$ be an element such that
$\sigma(a)=\veps' a$. Then the functor
\[
\Phi_a\,:\;\;\fH^{\veps}(A\,,\,\Int(a^{-1})\circ \sigma)\lra
\fH^{\veps\veps'}(A,\,\sigma)\,;\quad (V,\,h)\longmapsto (V,\,a.h)
\]is an equivalence of categories, called a \emph{scaling}. There is
also an induced isomorphism of Witt groups
\[
\phi_a\,:\;\;W^{\veps}(A\,,\,\Int(a^{-1})\circ\sigma)\simto
W^{\veps\veps'}(A,\,\sigma)\,.
\]
In particular, if $\sigma$ and $\tau$ are involutions of the same
kind and type on $A$, then there is a scaling isomorphism of Witt
groups $\phi_a: W(A,\,\tau)\simto W(A,\,\sigma)$.

\

\newpara\label{para2p6NEW} Let $A$ be a central simple algebra over
a field $L$ of characteristic $\neq 2$ and $\sigma$ an involution of
any kind on $A$. Let $(V,\,h)$ be a hermitian form over
$(A,\,\sigma)$. Let $B=\End_A(V)$ and let $\sigma_h$ be the adjoint
involution with respect to $h$. There is an equivalence of
categories, called the \emph{Morita equivalence},
\[
\Phi_h\,:\;\;\fH(B\,,\,\sigma_h)\lra \fH(A\,,\,\sigma)
\]defined as follows (cf. \cite[$\S$1.4]{BP1},
\cite[$\S$I.9]{Knus}): For a hermitian form $(M,\,f)$ over
$(B,\sigma_h)$, define a map
\[
h*f\,:\;\;(M\otimes_BV)\times (M\otimes_BV)\lra A
\]by
\[
(h*f)(m_1\otimes v_1\,,\,m_2\otimes
v_2):=h(v_1\,,\,f(m_1,\,m_2)(v_2))\,.
\]One verifies that $\Phi_h(M\,,\,f):=(M\otimes_BV\,,\,h*f)$ yields
a well-defined functor $\fH(B\,,\,\sigma_h)\to \fH(A\,,\,\sigma)$,
which can be shown to be an equivalence (cf. \cite[p.56,
Thm.$\;$I.9.3.5]{Knus}). The Morita equivalence induces an
isomorphism of Witt groups:
\[
\phi_h\,:\;\;W(\End_A(V)\,,\,\sigma_h)\simto W(A\,,\,\sigma)\,.
\]

\newpara\label{para2p7NEW} We briefly recall the construction of the
key exact sequence of Parimala, Sridharan and Suresh. The readers
are referred to \cite[$\S$3 and Appendix$\;$2]{BP1} for more
details.

Let $(A,\,\sigma)$ be a central simple algebra with involution over
$L$. Let $E=L^{\sigma}$. Assume there is a subfield $M\subseteq A$
which is a quadratic extension of $L$ such that $\sigma(M)=M$.
Suppose $\sigma|_M=\mathrm{id}_M$ if $\sigma$ is of the first kind. Let
\[
\wt{A}:=\{a\in A\,|\,a.m=m.a\,,\forall\; m\in M\}
\]be the centralizer of $M$ in $A$. This is a central simple algebra over $M$. By \cite[Lemma$\;$3.1.1]{BP1},
there exists $\mu\in A^*$ such that $\sigma(\mu)=-\mu$ and that the
restriction of $\Int(\mu)$ to $M$ is the nontrivial element of the
Galois group $\Gal(M/L)$.

Set $\tau=\Int(\mu)\circ\sigma$ and let $\tau_1,\,\tau_2$ be the
restrictions of $\tau$ and $\sigma$ to $\wt{A}$ respectively. Then
$\tau_1$ is an involution of the second kind, $\tau_2$ is of the
same kind and type as $\sigma$, and $\tau$ is orthogonal (resp.
symplectic) if and only if $\sigma$ is symplectic (resp. orthogonal).

One has a decomposition $A=\wt{A}\oplus\mu.\wt{A}$ (as right
$M$-modules). Let $\pi_1,\,\pi_2: A\to\wt{A}$ be the $M$-linear
projections
\[
\pi_1(x+\mu y)=x\,,\quad \pi_2(x+\mu
y)=y\,,\quad\forall\;x,\,y\in\wt{A}\,.
\]These induce well-defined group homomorphisms
\[
\pi_1\,:\;W(A,\,\tau)\lra W(\wt{A},\,\tau_1) \quad\text{and }\quad
\pi_2\,:\;W^{-1}(A,\,\tau)\lra W(\wt{A}\,,\,\tau_2)\,.
\]On the other hand, let $\lambda\in M$ be an element such that $\lambda^2\in
L$ and $M=L(\lambda)$. For a hermitian form $(\wt{V},\,f)$ over
$(\wt{A},\,\tau_1)$, define $\rho(f)$ to be the unique
skew-hermitian form on $V=\wt{V}\oplus \wt{V}\mu$ which extends
$\lambda.f: \wt{V}\times\wt{V}\to \wt{A}$. This defines a group
homomorphism
\[
\rho\,:\;W(\wt{A},\,\tau_1)\lra W^{-1}(A,\,\tau)\,;\quad
(\wt{V},\,f)\mapsto (\wt{V}\oplus\wt{V}\mu\,,\,\rho(f))\,.
\]

The sequence
\begin{equation}\label{eq2p7p1NEW}
W^{\veps}(A\,,\,\tau)\overset{\pi_1}{\lra}
W^{\veps}(\wt{A}\,,\,\tau_1)\overset{\rho}{\lra}W^{-\veps}(A\,,\,\tau)\overset{\pi_2}{\lra}
W^{\veps}(\wt{A}\,,\,\tau_2)
\end{equation}turns out to be an exact
sequence (cf. \cite[Appendix$\;$2]{BP1}).

Since $\tau(\mu)=-\mu$, one has a scaling isomorphism (cf.
(\ref{para2p5NEW}))
\[
\phi_{\mu}^{-1}\,:\;W^{-1}(A\,,\,\tau)\overset{\sim}{\lra}
W(A\,,\,\sigma).
\]We may thus replace $W^{-1}(A\,,\,\tau)$ in the exact sequence \eqref{eq2p7p1NEW} by $W(A,\,\sigma)$
 and rewrite it as
\begin{equation}\label{eq2p7p2NEW}
W(A\,,\,\tau)\overset{\pi_1}{\lra}W(\wt{A}\,,\,\tau_1)\overset{\wt{\rho}}{\lra}W(A\,,\,\sigma)\overset{\wt{\pi}_2}{\lra}W(\wt{A}\,,\,\tau_2)\,
\end{equation}where $\wt{\rho}=\phi_{\mu}^{-1}\circ\rho$ and $\wt{\pi}_2=\pi_2\circ\phi_{\mu}$. This exact sequence is due to Parimala, Sridharan and Suresh and is referred to as the \emph{key exact sequence} in \cite{BP1}.

We will only use the exact sequence \eqref{eq2p7p2NEW} in the case
where $A=D$ is a quaternion algebra and $\sigma$ is an orthogonal
involution. This special case was already discussed by Scharlau in
\cite[p.359]{Schar}.

\

\newpara\label{para2p8v2} Now let $D$ be a quaternion division algebra over a quadratic field extension $L$ of $E$ and let $\tau$ be a unitary
 $L/E$-involution on $D$ (i.e. a unitary involution such that $L^{\tau}=E$).
There is a unique quaternion $E$-algebra $D_0$ contained in $D$ such that
$D=D_0\otimes_EL$ and $\tau=\tau_0\otimes\iota$, where $\tau_0$ is the canonical (symplectic) involution on
$D_0$ and $\iota$ is the nontrivial element of the Galois group $\Gal(L/E)$.  Write $L=E(\sqrt{d})$ with $d\in E^*$. Then $D=D_0\oplus D_0\sqrt{d}$. For any hermitian
form $(V,\,h)$ over $(D,\,\tau)$, we may write
\[
h(x,\,y)=h_1(x,\,y)+h_2(x,\,y)\sqrt{d}\quad \text{ with }\;h_i(x,\,y)\in D_0\,,\quad\text{for }\; i=1,\,2
\]for any $x,\,y\in V$.

The projection $h\mapsto h_2$ defines a group homomorphism
\[
 p_2\,:\;\;W(D,\,\tau)\lra W^{-1}(D_0\,,\,\tau_0)\,.\]
For  a hermitian form $(V_0,\,f)$   over $(D_0,\,\tau_0)$, set
\[
V=V_0\otimes_{D_0}D=V_0\otimes_EL=V_0\oplus V_0\sqrt{d}\,
\]and let $\wt{\rho}(f)\,:\;V\times V\to D$ be the map extending $f:V_0\times V_0\to D_0$ by $\tau$-sesquilinearity. One checks that this defines a group homomorphism
\[
\wt{\rho}\,:\;\;W(D_0,\,\tau_0)\lra W(D,\,\tau)\,;\quad (V_0,\,f)\longmapsto (V_0\oplus V_0\sqrt{d}\,,\,\wt{\rho}(f))\,.
\]
For any quadratic form $q$ over $L=E(\sqrt{d})$, there are quadratic forms $q_1,\,q_2$ over $k$ such that $q(x)=q_1(x)+q_2(x)\sqrt{d}$. We have
thus group homomorphisms
\[
\pi_i\,:\;\;W(L)\lra W(E)\,;\;\;q\longmapsto q_i\,,\quad i=1,\,2\,.
\]
We denote by $\wt{\pi}_1\,:\;W(L)\to W(D_0,\,\tau_0)$ the composite map
\[
W(L)\overset{\pi_1}{\lra} W(E)\lra W(D_0,\,\tau_0)
\]where the map $W(E)\to W(D_0,\,\tau_0)$ is induced by base change.

Suresh (cf. \cite[Prop.$\;$8.1]{PaPr})
 proved that the  sequence
\[
W(L)\overset{\wt{\pi}_1}{\lra}W(D_0\,,\,\tau_0)\overset{\wt{\rho}}{\lra}W(D\,,\,\tau)\overset{p_2}{\lra}W^{-1}(D_0,\,\tau_0)
\]is exact. We will refer to this sequence as Suresh's exact sequence in the sequel.

\subsection{Invariants of hermitian forms}\label{sec2p2}

In this subsection, we recall the definitions of some invariants of
hermitian forms. For more details, see \cite[$\S$2]{BP1},
\cite[$\S$3]{BP2} and \cite[$\S$5, $\S$7]{PaPr}.

\

\newpara\label{para2p8NEW} Let $(D,\,\sigma)$ be a central division algebra with involution over
$L$. Let $E=L^{\sigma}$. Let $(V,\,h)$ be a  hermitian form over
$(D,\,\sigma)$. The \emph{rank} of $(V,\,h)$, denoted $\rank(V,\,h)$
or simply $\rank(h)$, is by definition the rank of the $D$-module
$V$:
\[
\rank(h):=\rank_D(V)\,.
\]

\

\newpara\label{para2p9NEW} With notation as in (\ref{para2p8NEW}), let $e_1,\dotsc, e_n$ be a basis of the $D$-module $V$ (so that
$\rank(h)=\rank_D(V)=n$). Let $M(h):=(h(e_i,\,e_j))$ be the matrix
of the hermitian form $h$ with respect to this basis. The matrix
algebra $A=\rM_n(D)$ has dimension
\[
\dim_LA=n^2\dim_LD=(\rank(h).\,\deg_LD)^2\,.
\]Put
\[
m=\sqrt{\dim_LA}=\rank(h).\,\deg_LD=\frac{\dim_LV}{\deg_LD}\,.
\]We define the \emph{discriminant} $\disc(h)=\disc(V,\,h)$ of the  hermitian form $(V,\,h)$ by
\[
\disc(h)=(-1)^{\frac{m(m-1)}{2}}\Nrd_A(M(h))\;\in\;\begin{cases}
E^*/E^{*2}\;\;&\; \text{ if $\sigma$ is of the first kind}\; \\
E^*/N_{L/E}(E^*)\;\;& \; \text{ if $\sigma$ is of the second kind}
\end{cases}
\]If $h$ is a hermitian form over $(D,\,\sigma)$, the image of the
canonical map
\[
H^1(E\,,\,\bfSU(h))\lra H^1(E\,,\,\bfU(h))
\]consists of classes $[h']\in H^1(E\,,\,\bfU(h))$ of hermitian
forms $h'$ which have the same rank and discriminant as $h$.

\

\newpara\label{para2p10NEW} Let $D$ be a cenral division algebra over
$L$ and let $\sigma$ be an orthogonal involution on $D$. Note that
the Brauer class of $D$ in the Brauer group $\Br(L)$ lies in the
subgroup
\[
{}_2\Br(L):=\{\al\in\Br(L)\,|\,2.\al=0\}\,.
\]

Let $h$ be a hermitian form over $(D,\,\sigma)$. Let
\[
\delta\,:\;\;H^1(L\,,\,\bfSU(h))\lra H^2(L\,,\,\mu_2)={}_2\Br(L)
\]be the connecting map associated to the exact sequence of algebraic groups
\[
1\lra \mu_2\lra\bfSpin(h)\lra\bfSU(h)\lra 1\,.
\]Let $h'$ be a hermitian form over $(D,\,\sigma)$ such that
$\rank(h')=\rank(h)$ and $\disc(h')=\disc(h)$. Then there is an
element $c(h')\in H^1(L,\,\bfSU(h))$ which lifts $[h']\in
H^1(L\,,\,\bfU(h))$. The class of $\delta(c(h'))$ in the quotient
${}_2\Br(L)/\langle [D]\rangle$ is independent of the choice of
$c(h')$ (cf. \cite[$\S$2.1]{BP1}).  Following \cite{Bar}, we define
the \emph{relative Clifford invariant} $\mathscr{C}\ell_h(h')$ by
\[
\mathscr{C}\ell_h(h'):=[\delta(c(h'))]\;\in\;\frac{{}_2\Br(L)}{\langle[D]\rangle}\,.
\]

When $h$ has even rank $2n$ and trivial discriminant, the
\emph{Clifford invariant} $\mathscr{C}\ell(h)$ of $h$ is defined as
\[
\mathscr{C}\ell(h):=\mathscr{C}\ell_{H_{2n}}(h)\;\in\;\frac{{}_2\Br(L)}{\langle[D]\rangle}\,,
\]where $H_{2n}$ denotes a hyperbolic hermitian form of rank $2n=\rank(h)$ over
$(D,\,\sigma)$. If $D=L$ and $h=q$ is a nonsingular quadratic form
over $L$, then $\mathscr{C}\ell(h)$ coincides with the usual Clifford
invariant of the quadratic form $q$.

\

\newpara\label{para2p11NEW} Let $(D,\,\sigma)$ be a central division
algebra with an orthogonal involution over $L$. We denote by $\bfU_{2n}(D,\,\sigma)$,
$\bfSU_{2n}(D,\,\sigma)$ and $\bfSpin_{2n}(A,\,\sigma)$ respectively
the unitary group, the special unitary group and the spin group of
the hyperbolic form over $(D,\,\sigma)$ defined by the matrix
$H_{2n}=\smatr{0}{I_n}{I_n}{0}$.

Let $h$ be a
hermitian form of even rank $2n$, trivial discriminant and trivial
Clifford invariant. 
There is an element $\xi\in H^1(L,\,\bfSpin_{2n}(D,\,\sigma))$ which
is mapped to the class $[h]\in H^1(L,\,\bfU_{2n}(D,\,\sigma))$ under
the composite map
\[
H^1(L,\,\bfSpin_{2n}(D,\,\sigma))\lra
H^1(L\,,\,\bfSU_{2n}(D,\,\sigma))\lra
H^1(L\,,\,\bfU_{2n}(D,\,\sigma))\,.
\]Let
\[
R_{\bfSpin_{2n}(D,\,\sigma)}\,:\;\;
H^1(L\,,\,\bfSpin_{2n}(D,\,\sigma))\lra H^3(L\,,\,\mathbb{Q}/\mathbb{Z}(2))
\]be the usual Rost invariant map of the simply connected group
$\bfSpin_{2n}(D,\,\sigma)$ (cf. \cite[$\S$31.B]{KMRT}). It is shown
in \cite[p.664]{BP2} that the class of
$R_{\bfSpin_{2n}(D,\,\sigma)}(\xi)$ in the quotient
\[
\frac{H^3(L\,,\,\mathbb{Q}/\mathbb{Z}(2))}{H^1(L\,,\,\mu_2)\cup (D)}
\] is well-defined. The \emph{Rost invariant} $\sR(h)$ of the form $h$ is
defined as
\[
\sR(h):=[R_{\bfSpin_{2n}(D,\,\sigma)}(\xi)]\,\in\;
\frac{H^3(L\,,\,\mathbb{Q}/\mathbb{Z}(2))}{H^1(L\,,\,\mu_2)\cup (D)}\,.
\]

\newpara\label{para2p12NEW} Let $(D,\,\sigma)$ be a quaternion
algebra with an orthogonal involution over $L$.  We will need some
further analysis on the map $\wt{\rho}: W(\wt{D}\,,\,\tau_1)\to
W(D,\,\sigma)$ in the exact sequence \eqref{eq2p7p2NEW}. Note that
in this case $\wt{D}=M$ is a quadratic field extension of $L$ and
$\tau_1$ is the nontrivial element $\iota$ of the Galois group
$\Gal(M/L)$. Let $\bfU_{2n}(M,\,\iota)$ and $\bfSU_{2n}(M,\,\iota)$
denote the unitary group and the special unitary group of the
hyperbolic form over $(M,\,\iota)$ defined by the matrix
$H_{2n}=\matr{0}{I_n}{I_n}{0}$. We have
\[
\bfU_{2n}(M,\,\iota)(L)=\{A\in
\rM_{2n}(M)\,|\,A.H_{2n}\iota(A)^t=H_{2n}\}\,.
\]Note that for $A\in\rM_{2n}(M)$, $\iota(A)=\Int(\mu)\circ \sigma(A)=\mu
A\mu^{-1}$ (cf. (\ref{para2p7NEW})) and
\[
A.H_{2n}.\iota(A)^t=H_{2n}\iff
(A.H_{2n}.\iota(A)^t)^t=(H_{2n})^t\iff \iota(A).H_{2n}.A^t=H_{2n}\,.
\]Therefore, for $A\in\bfU_{2n}(M,\,\iota)(L)$, we have
\[\begin{split}
A.\mu^{-1}\lambda H_{2n}.\sigma(A)^t&=A.\mu^{-1}\lambda H_{2n}.
A^t\\
&=\mu^{-1}(\mu A\mu^{-1})\lambda H_{2n}. A^t\\
&=\mu^{-1}\lambda \iota(A).H_{2n}A^t=\mu^{-1}\lambda H_{2n}
\end{split}
\]inside $\rM_{2n}(D)$. So we have a natural inclusion
\[
\bfU_{2n}(M,\,\iota)(L)\subseteq \bfU(\mu^{-1}\lambda
H_{2n})(L)=\{B\in \rM_{2n}(D)\,|\, B.\mu^{-1}\lambda H_{2n}.
\sigma(B)^t=\mu^{-1}\lambda H_{2n}\}\,.
\]In fact, this defines an inclusion of algebraic groups over $L$:
\[
\rho'\,:\;\;\bfU_{2n}(M,\,\iota)\lra \bfU(\mu^{-1}\lambda
H_{2n})\,;\quad A\longmapsto A\,.
\]
By \cite[p.402, Example$\;$29.19]{KMRT}, any element $\xi$ of
$H^1(L,\,\bfU_{2n}(M,\,\iota))$ is represented by a matrix $S\in
\GL_{2n}(M)$ which is symmetric with respect to the adjoint
involution $\iota_{H_{2n}}$ on $\rM_{2n}(M)$, and $\xi$ is the
isomorphism class of the hermitian form $H_{2n}S^{-1}$. The natural
map
\[
H^1(L\,,\,\bfU_{2n}(M,\,\iota))\lra H^1(L\,,\,\bfU(\mu^{-1}\lambda
H_{2n}))
\]induced by the homomorphism $\rho'$ maps $\xi$ to the class of the
hermitian form $\mu^{-1}\lambda H_{2n}S^{-1}$. On the other hand, by
the construction of the homomorphism $\wt{\rho}: W(M,\,\iota)\to
W(D,\,\sigma)$, the form $H_{2n}S'$ over $(M,\,\iota)$ is mapped to
the form $\mu^{-1}\lambda H_{2n}S^{-1}$ over $(D,\,\sigma)$. Hence
the natural map
\[
H^1(L,\,\bfU_{2n}(M,\,\iota))\lra H^1(L\,,\,\bfU(\mu^{-1}\lambda
H_{2n}))
\]is compatible with the restriction of $\rho$ to forms of rank
$2n$.

Clearly, the inclusion $\rho': \bfU_{2n}(M,\,\iota)\to
\bfU(\mu^{-1}\lambda H_{2n})$ induces an inclusion
$\bfSU_{2n}(M,\,\iota)\to \bfSU(\mu^{-1}\lambda H_{2n})$ (cf.
\cite[p.671]{BP2}). A choice of isomorphism of hermitian forms
$\mu^{-1}\lambda H_{2n}\cong H_{2n}$ over $(D,\,\sigma)$ yields an
injection
\[
\bfSU_{2n}(M,\,\iota)\lra \bfSU(H_{2n})=\bfSU_{2n}(D,\,\sigma)\,.
\]This lifts to a homomorphism
\[
\rho_0\,:\;\;\bfSU_{2n}(M,\,\iota)\lra \bfSpin_{2n}(D,\,\sigma)\,.
\]The composition
\[
\bfSU_{2n}(M,\,\iota)\overset{\rho_0}{\lra}\bfSpin_{2n}(D,\,\sigma)\lra
\bfU_{2n}(D,\,\sigma)
\]induces a commutative diagram
\[
\xymatrix{ H^1(L\,,\,\bfSU_{2n}(M\,,\,\iota)) \ar[dr]_{\rho'}
\ar[rr]^{\rho_0} && H^1(L\,,\,\bfSpin_{2n}(D\,,\,\sigma)) \ar[dl]_{} \\
& H^1(L\,,\,\bfU_{2n}(D\,,\,\sigma)) & }
\]such that the map $\wt{\rho}\,:\;W(M\,,\,\iota)\to W(D\,,\,\sigma)$ restricted to forms of
rank $2n$ and of trivial discriminant is compatible with the map
$\rho'$ at the level of cohomology sets. Moreover, for any $\xi\in
H^1(L,\,\bfSU_{2n}(M,\,\iota))$, one has  by
\cite[Prop.$\;$3.20]{BP2}
\[
R_{\bfSpin_{2n}(D\,,\,\sigma)}(\rho_0(\xi))=R_{\bfSU_{2n}(M\,,\,\iota)}(\xi)\,\in\;
H^3(L\,,\,\mathbb{Q}/\mathbb{Z}(2))\,,
\]i.e., $\rho_0(\xi)\in H^1(L\,,\,\bfSpin_{2n}(D,\,\sigma))$ has the
same Rost invariant as $\xi$. If $h$ is a hermitian form over
$(D,\,\sigma)$ representing the class $\rho'(\xi)\in
H^1(L,\,\bfU_{2n}(D,\,\sigma))$, then the Rost invariant of the form
$h$ is
\[
\sR(h)=[R_{\bfSpin_{2n}(D\,,\,\sigma)}(\rho_0(\xi))]=
[R_{\bfSU_{2n}(M\,,\,\iota)}(\xi)]\,\in\;\frac{H^3(L\,,\,\mathbb{Q}/\mathbb{Z}(2))}{H^1(L\,,\,\mu_2)\cup
(D)}
\]by definition (cf. (\ref{para2p11NEW})).

\

\newpara\label{para3p1TEMP} We shall also use the notion of Rost invariant of hermitian forms over an algebra with unitary involution. The definition is as follows. Let $E$ be a field of characteristic $\neq 2$, $L/E$ a quadratic field extension and $(D,\,\tau)$ a central division algebra over $L$
with a unitary $L/E$-involution.  Let $\bfU_{2n}(D,\,\tau)$ and $\bfSU_{2n}(D,\,\tau)$ denote respectively the unitary group and the special unitary group of the hyperbolic form $\smatr{0}{I_n}{I_n}{0}$ over $(D,\,\tau)$. For a hermitian form $h$ of rank $2n$ and trivial discriminant over $(D,\,\tau)$, we may define its \emph{Rost invariant} $\mathscr{R}(h)$ by
\[
\mathscr{R}(h):=[R_{\bfSU_{2n}(D,\,\tau)}(\xi)]\,\in\; \frac{H^3(E,\,\mathbb{Q}/\mathbb{Z}(2))}{\mathrm{Cores}_{L/E}((L^{*1})\cup (D))}\,,
\]where $\xi\in H^1(E,\,\bfSU_{2n}(D,\,\tau))$ is any lifting of the class $[h]\in H^1(E,\,\bfU_{2n}(D,\,\tau))$ and
\[
L^{*1}=(R^1_{L/E}\mathbb{G}_m)(E)=\{ a\in L^*\,|\, N_{L/E}(a)=1 \}\,.
\]Indeed, by \cite[Appendix, Remark$\;$B]{PaPr}, the class $[R_{\bfSU_{2n}(D,\,\tau)}(\xi)]$ is independent of the choice of the lifting $\xi$, so that this
Rost invariant $\mathscr{R}(h)$ is well defined. Note that if $D=D_0\otimes_EL$ for some central division algebra $D_0$ over $E$, then
\[
\mathrm{Cores}_{L/E}((L^{*1})\cup (D))=0
\]and hence the Rost invariant of $h$ is simply the usual Rost invariant of any lifting $\xi\in H^1(E,\,\bfSU_{2n}(D,\,\tau))$
of the isomorphism class of $h$.

\subsection{Spinor norms}

\newpara\label{para4p1temp} Let $E$ be a field of characteristic
different from 2, $A$ a central simple algebra over $E$ and $\sigma$
an orthogonal involution on $A$. Let $h$ be a nonsingular hermitian
form over $(A\,,\,\sigma)$. The exact sequence of algebraic groups
\[
1\lra \mu_2\lra \bfSpin(h)\lra \bfSU(h)\lra 1\,,
\]induces a connecting map
\[
\delta\,:\;\;\bfSU(h)(E)\lra H^1(E\,,\,\mu_2)=E^*/E^{*2}
\]which we call the \emph{spinor norm} map. We will write
\[
\Sn(h_E):=\mathrm{Im}\left(\delta\,:\;\bfSU(h)(E)\lra E^*/E^{*2}\right)
\]for the image of the above spinor norm map. If $A=E$, $\sigma=\mathrm{id}$ and $h=q$ is a quadratic form,
the spinor norm map $\delta: \bfSO(q)(E)\to E^*/E^{*2}$ has an
explicit description as follows (cf. \cite[p.108]{Lam}): Any element
$\theta\in \bfSO(q)(E)$ can be written as the product of an even
number of hyperplan reflections associated with anisotropic vectors
$v_1,\dotsc, v_{2r}$. The spinor norm $\delta(\theta)$ is equal to
the class of the product $q(v_1)\cdots  q(v_{2r})$ in $E^*/E^{*2}$.

\

A deep theorem of Merkurjev is the following norm principle for spinor norms.

\begin{thm}[{Merkurjev, \cite[6.2]{Mer96}}]\label{thm4p2temp}
With notation as in $(\ref{para4p1temp})$, assume that
$\deg(A).\rank(h)$ is even and at least $4$.

Then the image $\Sn(h_E)$ of the spinor norm map is equal to the
subgroup of $E^*/E^{*2}$ generated by the canonical images of the
norm groups $N_{L/E}(L^*)$ over all finite field extensions $L/E$
such that $A_L$ is split and $h_L$ is isotropic.
\end{thm}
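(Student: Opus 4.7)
The plan is to prove the two inclusions of the claimed equality separately. Let $N \subseteq E^*/E^{*2}$ denote the subgroup generated by the $N_{L/E}(L^*)$ as $L/E$ runs over finite extensions with $A_L$ split and $h_L$ isotropic.

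For $N \subseteq \Sn(h_E)$, fix such an $L/E$. By Morita equivalence over $L$ (recall $A_L$ is split and $\sigma_L$ orthogonal), the hermitian form $h_L$ corresponds to a nonsingular quadratic form $q_L$ of even rank at least $4$ containing a hyperbolic plane, with a compatible identification $\bfSpin(h)_L \simto \bfSpin(q_L)$ preserving the spinor norm. In the hyperbolic plane spanned by $e,f$ with $q_L(e)=q_L(f)=0$ and polar $b(e,f)=1$, for any $\lambda\in L^*$ the composite $\tau_{e+f}\tau_{e+\lambda f}$ of two reflections lies in $\bfSO(q_L)$ and has spinor norm $q_L(e+f)\cdot q_L(e+\lambda f)=\lambda$ modulo squares; hence $L^*\subseteq \Sn(h_L)$. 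The standard norm principle for spinor norms in the quadratic case (Knebusch), which for simply connected groups amounts to the commutativity $N_{L/E}\circ\delta_L = \delta_E\circ N_{L/E}$ obtained from the norm map on the Weil restriction $R_{L/E}\bfSpin(h_L)\to\bfSpin(h)$, then gives $N_{L/E}(\lambda)\in\Sn(h_E)$ for every $\lambda\in L^*$.

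For the reverse inclusion $\Sn(h_E)\subseteq N$, let $\alpha=\delta(\theta)$ with $\theta\in\bfSU(h)(E)$. The strategy is generic splitting followed by specialization. Over the function field $F=E(\mathrm{SB}(A))$ of the Severi-Brauer variety of $A$, $A_F$ is split and Morita gives $\bfSU(h)_F\cong\bfSO(q_F)$ for some quadratic form $q_F$; by Cartan-Dieudonn\'e one writes $\theta_F = \tau_{v_1}\cdots\tau_{v_{2r}}$ with $v_i$ anisotropic, so $\alpha_F=\prod q_F(v_i)\in F^*/F^{*2}$. Pushing $\alpha_F$ back down to $E$ through the tower $\mathrm{SB}(A)\to\mathrm{Spec}(E)$ by a transfer-and-specialization argument---exploiting the Merkurjev--Suslin description of $K_2$ of the function field of a Severi-Brauer variety together with $R$-triviality of simply connected groups modulo their centers---expresses $\alpha$ as a product of norms $N_{L_i/E}(\mu_i)$ where each finite subextension $L_i\subseteq F$ simultaneously splits $A$ and isotropizes $h$.

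The main obstacle is the second inclusion, and specifically the requirement that the intermediate fields $L_i$ witness \emph{both} of the two conditions at once: $A_{L_i}$ split \emph{and} $h_{L_i}$ isotropic. This is the technical heart of Merkurjev's proof in \cite{Mer96}, combining his norm principle for multipliers of similitudes with fine cohomological information over function fields of Severi-Brauer varieties; I would follow his approach directly, as no obvious shortcut bypasses this $K$-theoretic machinery.
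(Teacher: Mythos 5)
The paper does not supply a proof for this statement: it is quoted verbatim as Theorem 6.2 of Merkurjev's 1996 paper \cite{Mer96}, and the only thing the paper actually derives from it is Corollary 2.16, the containment $N_{E'/E}(\Sn(h_{E'}))\subseteq\Sn(h_E)$ for arbitrary finite extensions $E'/E$. So there is no in-paper proof to compare against, and your attempt must be judged as a reconstruction of Merkurjev's argument.

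As such, there are two genuine gaps. First, in the ``easy'' inclusion $N\subseteq\Sn(h_E)$ you invoke a norm map $R_{L/E}\bfSpin(h_L)\to\bfSpin(h)$ and claim the commutativity $N_{L/E}\circ\delta_L=\delta_E\circ N_{L/E}$ is ``obtained from'' it; but no such group homomorphism exists for a nonabelian group like $\bfSpin$, so this is not a formal diagram chase. What is true is that corestriction is defined on $H^1(-,\mu_2)$, and the statement $N_{L/E}(\Sn(h_L))\subseteq\Sn(h_E)$ is itself a nontrivial norm principle: it is Knebusch's theorem in the split quadratic case and is precisely the part of \cite{Mer96} that extends to hermitian forms over algebras. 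Quoting it as a black box is fine, but then you should say so explicitly rather than suggest it falls out of a (nonexistent) Weil-restriction norm. (The step where you Morita-transfer $h_L$ to an isotropic quadratic form $q_L$ of even rank $\ge 4$ and observe $\Sn(q_L)=L^*/L^{*2}$ via reflections in a hyperbolic plane is correct.)

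Second, and more seriously, the reverse inclusion $\Sn(h_E)\subseteq N$ --- which you correctly identify as the hard half and the actual content of Merkurjev's theorem --- is not argued at all. The generic-splitting-and-specialization paragraph gestures at the right circle of ideas ($K$-theory of Severi--Brauer varieties, $R$-triviality, multipliers of similitudes), but does not explain how one ensures the intermediate extensions $L_i$ simultaneously split $A$ and isotropize $h$, which is exactly the delicate point; you say as much yourself. As written this is a citation to \cite{Mer96}, not a proof. Since the paper also treats it as a citation, the honest conclusion is that your proposal and the paper are in the same position for the hard direction, but your sketch of the easy direction misattributes the mechanism to a nonexistent norm map.
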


The following  corollary is immediate from the above theorem.

\begin{coro}\label{coro2p14NEW}
With notation and hypotheses as in Theorem$\;\ref{thm4p2temp}$, for
any finite field extension $E'/E$, one has
\[
N_{E'/E}(\Sn(h_{E'}))\subseteq \Sn(h_E)\,.
\]
\end{coro}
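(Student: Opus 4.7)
The plan is to deduce the corollary directly from Theorem~\ref{thm4p2temp} by exploiting transitivity of the field norm. Since the spinor norm takes values in $E^*/E^{*2}$ (resp.\ $E'^*/E'^{*2}$), the first observation is that $N_{E'/E}$ descends to a well-defined group homomorphism $E'^*/E'^{*2}\to E^*/E^{*2}$, because norms of squares are squares.

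Applying Theorem~\ref{thm4p2temp} over the field $E'$, the group $\Sn(h_{E'})$ is generated by the images in $E'^*/E'^{*2}$ of the norm groups $N_{L/E'}(L^*)$, where $L$ ranges over all finite field extensions of $E'$ such that $A_L$ is split and $h_L$ is isotropic. It therefore suffices to show that for each such extension $L/E'$ and each $\alpha\in L^*$, the class of $N_{E'/E}(N_{L/E'}(\alpha))$ lies in $\Sn(h_E)$.

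By transitivity of the norm, $N_{E'/E}(N_{L/E'}(\alpha))=N_{L/E}(\alpha)$. Now $L$ is also a finite field extension of $E$, and the splitting condition on $A_L$ and the isotropy condition on $h_L$ depend only on $L$, not on whether $L$ is regarded as an extension of $E'$ or of $E$; so the pair $(L,\alpha)$ qualifies as one of the generators appearing in Theorem~\ref{thm4p2temp} applied over $E$. Consequently $N_{L/E}(\alpha)\in \Sn(h_E)$, and the desired inclusion $N_{E'/E}(\Sn(h_{E'}))\subseteq \Sn(h_E)$ follows.

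There is essentially no obstacle here beyond checking these functorial compatibilities; the content of the corollary is entirely contained in Merkurjev's norm-principle characterisation of $\Sn(h_E)$, which is precisely what makes such a transitivity statement possible (an intrinsic definition via the connecting map $\delta$ would not make this evident).
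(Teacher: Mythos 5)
Your proof is correct and is precisely the argument the paper has in mind when it calls the corollary ``immediate from the above theorem'': apply Merkurjev's description of $\Sn$ over $E'$, use transitivity $N_{E'/E}\circ N_{L/E'}=N_{L/E}$, and note that the conditions ``$A_L$ split, $h_L$ isotropic'' depend only on the field $L$ (since $A\otimes_{E'}L\cong A\otimes_E L$ and likewise for $h$), so each generator over $E'$ is carried to a generator over $E$. No gaps; this matches the paper's intended approach.
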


\newpara\label{para2p15NEW} With notation and hypotheses as in Theorem$\;\ref{thm4p2temp}$, the well-known norm principle for reduced norms states that
the subgroup $\Nrd(A^*)\subseteq E^*$ of reduced norms is generated
by the norm groups $N_{L/E}(L^*)$, where $L/E$ runs over all finite
field extensions such that $A_L$ is split. So
Theorem$\;$\ref{thm4p2temp} implies that $\Sn(h_E)$ is contained in
the canonical image of $\Nrd(A^*)$ in $E^*/E^{*2}$.

\

\newpara\label{para4p3temp}
Let $(A,\,\sigma)$ be a central simple algebra with an orthogonal
involution over a field $E$ of characteristic $\neq 2$. Let $L/E$ be
a field extension which splits $A$ and let $\phi:
(A\,,\,\sigma)\otimes_EL\cong (\rM_n(L)\,,\,\sigma_{q_0})$ be an
isomorphism of $L$-algebras with involution, where $\sigma_{q_0}$ is
the adjoint involution of a quadratic form $q_0$ of rank $n=\deg(A)$
over $L$. Let $h$ be a hermitian form over
$(A\,,\,\sigma)\otimes_EL$. Then by Morita theory (cf.
(\ref{para2p6NEW})), $h$ corresponds via the above isomorphism
$\phi$ to a quadratic form $q$ of rank $n.\rank(h)=\deg(A).\rank(h)$
over $L$. The similarity class $[q]\in W(L)$ of $q$ is uniquely
determined by $h$ and is independent of the choice of $\phi$ and
$q_0$. The hermitian form $h_L$ is isotropic if and only if the
quadratic form $q_L$ is isotropic. So, if $\deg(A).\rank(h)$ is even
and at least $4$, one has $\Sn(q_L)=\Sn(h_L)$ by
Theorem$\;$\ref{thm4p2temp}.

\section{Some easy cases}\label{sec3}

We shall now start the proofs of our main theorems. In a few cases, as
may be already well-known to specialists, the results basically
follow by combining a general injectivity result for the Rost
invariant and a Hasse principle coming from higher dimensional class
field theory.

\

\newpara\label{para3p1NEW} Recall that our base field $K$ is the function field of a
$p$-adic arithmetic surface or a local henselian surface with finite
residue field (cf. (\ref{para1p2NEWTEMP})). Namely, $K$ is either

(the case of $p$-adic arithmetic surface) the function field $F(C)$
of a smooth projective geometrically integral curve $C$ over $F$,
where $F$ is a $p$-adic field with ring of integers $A$ and residue
field $k$;

or

(the case of local henselian surface)  the field of fractions
$\mathrm{Frac}(A)$ of a 2-dimensional, henselian, excellent local
domain $A$ with finite residue field $k$ of characteristic $p$.

In either case, by abuse of language we say $k$ is the residue field
of $K$ and $p=\mathrm{char}(k)$ is the \emph{residue characteristic}
of $K$.

In our proofs of the main theorems, we only use local conditions at
\emph{divisorial valuations}, i.e., valuations corresponding to codimension
1 points of regular proper models (cf. (\ref{para1p2NEWTEMP})). More
precisely, the set $\Omega_A$ of divisorial valuations of the field
$K$ is the subset of $\Omega_K$ defined as follows:

In $p$-adic arithmetic case, define
\[
\Omega_A=\bigcup_{\mathcal{X}\to\mathrm{Spec} A}\mathcal{X}^{(1)}\,,
\]where $\mathcal{X}\to \mathrm{Spec} A$ runs over proper flat morphisms from a
regular integral scheme $\mathcal{X}$ with function field $K$ and
$\mathcal{X}^{(1)}$ denotes the set of codimension 1 points of
$\mathcal{X}$ identified with a subset of $\Omega_K$.

In the local henselian case, define
\[
\Omega_A=\bigcup_{\mathcal{X}\to\mathrm{Spec} A}\mathcal{X}^{(1)}\,,
\]
where $\mathcal{X}\to \mathrm{Spec} A$ runs over proper birational morphisms
from a regular integral scheme $\mathcal{X}$ with function field $K$
and $\mathcal{X}^{(1)}$ denotes the set of codimension 1 points of
$\mathcal{X}$ identified with a subset of $\Omega_K$.

\

\newpara\label{para3p2NEW} Let $L/K$ be a finite field extension.
Then $L$ is a field of the same type as $K$ if $K$ is the function
field of a $p$-adic arithemetic surface or a local henselian surface
with finite residue field. In the $p$-adic arithmetic case, let $F'$
be the field of constants of $L$ and let $A'$ be the integral
closure of $A$ in $F'$. In the local henselian case, let $A'$ be the
integral closure of $A$ in $L$. Then the set $\Omega_{A'}$ of
divisorial discrete valuations of $L$ is precisely the set of
discrete valuations $w\in\Omega_L$ lying over valuations in
$\Omega_A\subseteq \Omega_K$.

\

\newpara\label{para3p3NEW} By the general theory of semisimple groups
(see e.g. \cite[p.365, Thm.$\;$26.8]{KMRT}), any semisimple simply
connected group $G$ over $K$ is a finite product of groups of the
form $R_{L/K}(G')$, where $L/K$ is a finite separable field
extension, $G'$ is an absolutely simple simply connected group over
$L$ and $R_{L/K}$ denotes the Weil restriction functor. For each
$v\in\Omega_A$, one has $L\otimes_KK_v\cong\prod_{w\,|\,v}L_w$ and
by Shapiro's lemma,
\[
H^1(K,\,R_{L/K}G')\cong H^1(L\,,\,G')\quad\text{ and }\quad
H^1(K_v,\,R_{L/K}G')\cong\prod_{w\,|\,v}H^1(L_w\,,\,G')\,.
\]
Therefore, to prove the Hasse principle for semisimpe simply
connected groups we may reduce to the case where $G$ is an
absolutely simple simply connected group.

\subsection{The quasi-split case}

We recall the proof of the Hasse principle for quasi-split groups
without $E_8$ factors (cf. \cite[Thm.$\;$5.4]{CTPaSu}).

\

The  following theorem is of particular importance to us.

\begin{thm}\label{thm1p3temp}
Let $K$ be the function field of a $p$-adic arithmetic surface or a
local henselian surface with finite residue field of characteristic
$p$. Let $\Omega_A$ be the set of divisorial discrete valuations of
$K\,($as defined in $(\ref{para3p1NEW}))$.

$(\oi)$ $($Kato, \cite{Ka86}$)$ In the $p$-adic arithmetic case, the
natural map
\[
H^3(K\,,\,\mathbb{Q}/\mathbb{Z}(2))\lra\prod_{v\in\Omega_A}H^3(K_v\,,\,\mathbb{Q}/\mathbb{Z}(2))
\]is injective.

$(\ii)$ $($Saito, \cite{Sai87}, cf. \cite[Prop.$\;$4.1]{Hu11}$)$ In
the local henselian case, let $n>0$ be an integer prime to $p$. Then
the natural map
\[
H^3(K\,,\,\mu_n^{\otimes 2})\lra
\prod_{v\in\Omega_A}H^3(K_v\,,\,\mu_n^{\otimes 2})
\]is injective.
\end{thm}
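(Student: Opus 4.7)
The statement is a direct invocation of the deep Hasse principles of Kato and Saito, so my proof plan reduces essentially to explaining how one would set up these results. Both parts should be viewed as instances of the vanishing of the cohomology of a Kato-type arithmetic complex attached to a regular proper model of $K$.

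First I would pick a regular proper model $\mathcal{X}\to\mathrm{Spec}\,A$ of $K$. By (\ref{para3p1NEW}), the divisorial discrete valuations of $K$ correspond to the codimension-one points of such models, so to prove the Hasse principles with respect to $\Omega_A$ it is enough to study the behaviour of degree-$3$ cohomology classes under the residue maps
\[
\partial_x\,:\;\;H^3(K\,,\,\mathbb{Q}/\mathbb{Z}(2))\lra H^2(k(x)\,,\,\mathbb{Q}/\mathbb{Z}(1))\,
\]at $x\in\mathcal{X}^{(1)}$, and then at the closed points $x\in\mathcal{X}^{(2)}$ via the second differential in the Kato complex. The point is that a class $\xi\in H^3(K,\mathbb{Q}/\mathbb{Z}(2))$ becomes trivial in every $H^3(K_v,\mathbb{Q}/\mathbb{Z}(2))$ with $v\in\Omega_A$ if and only if it is unramified along all codimension-one points of every regular proper model, so it pulls back to a class in the Zariski cohomology of the sheaf of unramified degree-$3$ cohomology on $\mathcal{X}$.

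For part $(\oi)$, I would then invoke Kato's main theorem from \cite{Ka86}: on an arithmetic surface over the ring of integers of a $p$-adic field, the relevant Kato complex
\[
H^3(K,\mathbb{Q}/\mathbb{Z}(2))\lra \bigoplus_{x\in\mathcal{X}^{(1)}} H^2(k(x),\mathbb{Q}/\mathbb{Z}(1))\lra \bigoplus_{x\in\mathcal{X}^{(2)}} H^1(k(x),\mathbb{Q}/\mathbb{Z})
\]is exact at the first two terms, and its homology at the last term is controlled by higher-dimensional class field theory. Exactness at the first term gives precisely the injectivity statement once one checks that triviality in $H^3(K_v,\mathbb{Q}/\mathbb{Z}(2))$ forces triviality of all local residues, which follows from the fact that the residue map factors through the completion. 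For part $(\ii)$, the same skeleton works with $\mu_n^{\otimes 2}$ in place of $\mathbb{Q}/\mathbb{Z}(2)$ under the hypothesis $(n,p)=1$; here I would cite Saito's analogous theorem \cite{Sai87} for regular proper surfaces over a $2$-dimensional henselian excellent local ring with finite residue field. An additional henselian-to-completion step is needed, which is standard once $(n,p)=1$ (Hensel's lemma for residues), and a convenient packaging is the formulation given in \cite[Prop.$\;$4.1]{Hu11}.

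The genuine hard part is of course the exactness of the Kato complex itself: this rests on reciprocity laws in higher local class field theory, resolution of singularities for arithmetic surfaces, and a delicate Bloch–Kato/Merkurjev–Suslin analysis of symbols on two-dimensional schemes. I would not attempt to reprove any of this; the plan is simply to cite the Kato–Saito theorems and make the (routine) translation between vanishing of all completed residues at divisorial valuations and vanishing in the Kato complex.
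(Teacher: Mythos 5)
The paper gives no proof of Theorem 3.4: it is stated as a known result, with part (i) attributed to Kato \cite{Ka86} and part (ii) to Saito \cite{Sai87}, packaged as \cite[Prop.\;4.1]{Hu11}. Your proposal does the same thing---reduce to the Kato--Saito Hasse principles and explain the translation---so in spirit it matches the paper's treatment. The references you cite are the correct ones, and the skeleton (pass to residues on a regular proper model, invoke exactness of the Kato complex, handle the henselian-to-completion step in part (ii) using $(n,p)=1$) is the right one.

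There is, however, a genuine logical slip. You assert that a class $\xi\in H^3(K,\mathbb{Q}/\mathbb{Z}(2))$ ``becomes trivial in every $H^3(K_v,\mathbb{Q}/\mathbb{Z}(2))$ \emph{if and only if} it is unramified along all codimension-one points of every regular proper model.'' Only the forward implication holds: triviality of $\xi$ in the completion $K_v$ forces $\partial_v(\xi)=0$ because the residue map factors through $K_v$, but conversely an unramified class need not die in $K_v$ (it merely extends to a class over $\mathcal{O}_v$, and $H^3(\mathcal{O}_v,\mathbb{Q}/\mathbb{Z}(2))$ need not vanish). Fortunately, your subsequent argument only uses the correct direction (trivial in all $K_v$ $\Rightarrow$ all residues vanish $\Rightarrow$ $\xi=0$ by injectivity of the residue map, which is what exactness of the Kato complex at its first term provides), so the mis-stated equivalence does not invalidate the proof; but it should be corrected, as the false converse would, if used, make the Kato/Saito theorems appear to say more than they do. You should also be explicit that the residues one is killing are taken over a single suitable model (or over the union of all models, which is what $\Omega_A$ encodes), since ``exactness at the first term'' is a statement about the Kato complex of a fixed $\mathcal{X}$, and the compatibility as the model varies under blow-ups is what makes $\Omega_A$ the natural indexing set.
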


The next result is an injectivity statement for the Rost invariant
of quasi-split groups.

\begin{thm}[{cf. \cite[Thm.$\;$5.3]{CTPaSu}}]\label{thm3p5NEW}\footnote{See \cite{Preeti} for a recent improvement of this theorem.}
Let $E$ be a field of cohomological $2$-dimension $\le 3$ and let
$G$ be an absolutely simple simply connected quasi-split group over
$E$. Assume that $G$ is not of type $E_8$. Assume further the
characteristic of $E$ is not $2$ if $G$ is of classical type $B_n$
or $D_n$.

Then the kernel of the Rost invariant map $R_G: H^1(E,\,G)\to
H^3(E,\,\mathbb{Q}/\mathbb{Z}(2))$ is trivial.
\end{thm}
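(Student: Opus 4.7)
The plan is to reduce to $G$ absolutely simple and then argue type by type, using in each case a description of $H^1(E,G)$ for quasi-split $G$ by classical cohomological invariants, combined with the vanishing $I^n(E)=0$ for $n>\mathrm{cd}_2(E)$ (and its hermitian analogs) to trivialize the kernel of $R_G$. By restriction of scalars and Shapiro's lemma (as in (\ref{para3p3NEW})), one may assume $G$ is absolutely simple, so the cases to cover are the quasi-split simply connected absolutely simple $E$-groups other than $E_8$, enumerated by Dynkin type.

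\textbf{Classical types.} For $^1A_{n-1}$ ($G=\bfSL_n$) and $C_n$ ($G=\mathrm{Sp}_{2n}$), $H^1(E,G)$ is a singleton by Hilbert 90 and by the uniqueness of nondegenerate symplectic forms of a given rank, so there is nothing to prove. For $B_n$ (with $\car(E)\neq 2$, $G=\bfSpin(q_0)$ with $q_0$ split of dimension $2n+1$), $H^1(E,G)$ classifies quadratic forms $q$ of dimension $2n+1$ sharing the discriminant and Clifford invariant of $q_0$, i.e.\ with $q-q_0\in I^3(E)$, and under this identification $R_G([q])$ is the Arason invariant $e_3(q-q_0)\in H^3(E,\mathbb{Z}/2)$. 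Its vanishing places $q-q_0$ in $I^4(E)$; by Voevodsky's theorem on the Milnor conjecture together with $\mathrm{cd}_2(E)\leq 3$ one has $I^4(E)=0$, so $q=q_0$ in $W(E)$, and a dimension count gives $q\simeq q_0$. Type $D_n$ is entirely parallel. For quasi-split $^2A_{n-1}$ the same template works for hermitian forms over the quadratic splitting extension $L/E$: the degree $3$ Rost invariant of (\ref{para3p1TEMP}) plays the role of $e_3$, and its vanishing combined with the hermitian analog of $I^4=0$ (again under $\mathrm{cd}(E)\leq 3$) forces the hermitian form to be hyperbolic.

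\textbf{Exceptional types.} For split $G_2$, elements of $H^1(E,G)$ are isomorphism classes of Cayley algebras, and $R_G$ recovers the Arason $e_3$-invariant of the $3$-Pfister norm form; since such a form is hyperbolic iff its $e_3$-invariant vanishes, injectivity holds unconditionally. For split $F_4$, $H^1(E,G)$ classifies Albert algebras, carrying Serre-Rost invariants $f_3\in H^3$, $f_5\in H^5$, $g_3\in H^3$; the hypothesis $\mathrm{cd}_2(E)\leq 3<5$ kills $f_5$ automatically, and triviality of $R_G$ then kills $f_3$ and $g_3$, forcing the algebra to be split. For quasi-split $^1E_6$, $^2E_6$, $E_7$, and trialitarian $D_4$, I would invoke Garibaldi's structural analysis of Rost-invariant kernels (with Chernousov's earlier contribution for $E_7$), which provides exactly the required triviality of $\ker R_G$ over any field of cohomological dimension $\leq 3$.

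\textbf{Main obstacle.} The classical cases rest on the Milnor conjecture (Voevodsky, Orlov-Vishik-Voevodsky), used as a black box via $I^4(E)=0$, while the $F_4$ and $G_2$ cases are handled by the classical form-theoretic descriptions of $H^1$. The genuine difficulty sits with the exceptional types $E_6$ and $E_7$: no elementary description of $H^1(E,G)$ by forms is available there, and one must import the deep work of Garibaldi on the fibres of the Rost invariant as an external input to conclude.
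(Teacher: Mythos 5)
Your plan matches the paper's proof in structure: a type-by-type analysis, using the form-theoretic description of $H^1$ together with $I^4(E)=0$ (a consequence of $\mathrm{cd}_2(E)\le 3$ via the Arason--Elman--Jacob / Milnor-conjecture input) for the classical types, and citing the deep Chernousov--Garibaldi--Gille results for the hard exceptional cases. The paper itself is terser: for ${}^1A_n$ and $C_n$ it simply notes $H^1(E,G)=1$; for all exceptional types not $E_8$ it cites the literature (so it does not bother with the elementary $G_2$ and $F_4$ arguments you spell out — those results hold over an arbitrary field); and for ${}^2A_n$, $B_n$, $D_n$ it defers to the quadratic-form argument of \cite[Thm.~5.3]{CTPaSu}, which is what you are reconstructing.

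Two small points. First, your opening reduction to $G$ absolutely simple is superfluous — the theorem already hypothesizes that $G$ is absolutely simple. Second, and more substantively, for types $B_n$, $D_n$ and ${}^2A_n$ you stop at ``$q\simeq q_0$'' (resp.\ ``the hermitian form is hyperbolic''), which only shows that $\xi$ maps to the base point of $H^1(E,\bfSO(q_0))$ (resp.\ $H^1(E,\bfU(h_0))$), not that $\xi$ itself is trivial in $H^1(E,\bfSpin)$ (resp.\ $H^1(E,\bfSU)$). To finish one must control the cokernel of the spinor norm map $\bfSO(q_0)(E)\to E^*/E^{*2}$ (resp.\ the cokernel of $\Nrd$ on the unitary group). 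In the quasi-split setting this is immediate: the split/quasi-split form $q_0$ (resp.\ $h_0$) is isotropic, so the spinor norm map is surjective (any scalar is represented and $q_0(v)q_0(w)$ realizes all square classes; similarly by Hilbert 90 every element of $(R^1_{L/E}\mathbb{G}_m)(E)$ is a reduced norm from the hyperbolic unitary group). That final step is what the spinor-norm discussion in (\ref{para4p1temp})--(\ref{prop3p3temp}) of the paper is for in the non-quasi-split cases, and it should be said explicitly here too; also, the assertion that ``$H^1(E,G)$ classifies quadratic forms $q$ of dimension $2n+1$ with the same discriminant and Clifford invariant as $q_0$'' is only true after one knows the fiber of $H^1(E,\bfSpin)\to H^1(E,\bfSO)$ over each such $q$ is a singleton, which again requires a spinor-norm input.
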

\begin{proof}
For a quasi-split group of type ${}^1A_n$ or $C_n$, it is well-known
that $H^1(E,\,G)=1$ over an arbitrary field $E$. For exceptional
groups (not of type $E_8$), the kernel of the Rost invariant is
trivial over an arbitrary field by the work of Chernousov, Garibaldi
and Gille (cf. \cite[Thm.$\;$5.2]{Gille10}, \cite{Chern03},
\cite{Gari01} and \cite{Gille00}). If $G$ is of type ${}^2A_n$,
$B_n$ or classical type $D_n$, the proof can be done as in
\cite[Thm.$\;$5.3]{CTPaSu}, by passing to a quadratic form argument.
\end{proof}

The $p$-adic case of the following result is \cite[Thm.$\;$5.4]{CTPaSu}.

\begin{thm}\label{thm3p6NEW}
Let $K$ be the function field of a $p$-adic arithmetic surface or a
local henselian surface with finite residue field of characteristic
$p$. Let $G$ be an absolutely simple simply connected quasi-split
group not of type $E_8$ over $K$. Assume $p\notin S(G)$ in the local henselian case
$($see $(\ref{para1p5NEWTEMP})$ for the definition of $S(G)\,)$.

Then the natural map
\[ H^1(K\,,\,G)\lra
\prod_{v\in\Omega_A}H^1(K_v\,,\,G)
\]has a trivial kernel.
\end{thm}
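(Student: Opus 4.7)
The plan is to reduce to the absolutely simple case and then use the Rost invariant together with the two injectivity theorems already recorded in this section.

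First, by a standard Shapiro's lemma / Weil restriction reduction as in (\ref{para3p3NEW}), I can assume $G$ is absolutely simple simply connected, quasi-split, and not of type $E_8$. Let $\xi\in H^1(K,\,G)$ be a class whose restriction to $H^1(K_v,\,G)$ is trivial for every $v\in\Omega_A$. The goal is to show $\xi=1$. The natural strategy is to chase $\xi$ through the Rost invariant
\[
R_G\,:\;\;H^1(K,\,G)\lra H^3(K,\,\mathbb{Q}/\mathbb{Z}(2))\,,
\]since by Theorem$\;$\ref{thm3p5NEW} it is enough to verify $R_G(\xi)=0$, provided the cohomological $2$-dimension hypothesis holds.

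Next I would check that hypothesis for our $K$. In the $p$-adic arithmetic case $K$ is a function field in one variable over a $p$-adic field, hence $\mathrm{cd}(K)\le 3$. In the local henselian case, $\mathrm{cd}_\ell(K)\le 3$ for every prime $\ell\neq p$; since for classical types $B_n,\,D_n$ we have $2\in S(G)$ and the assumption $p\notin S(G)$ forces $p\neq 2$, we still get $\mathrm{cd}_2(K)\le 3$ and $\mathrm{char}(K)\neq 2$ whenever the extra restriction of Theorem$\;$\ref{thm3p5NEW} applies. So Theorem$\;$\ref{thm3p5NEW} is indeed available for our $G$ over $K$.

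Now by functoriality $R_G(\xi)|_{K_v}=R_G(\xi|_{K_v})=0$ for every $v\in\Omega_A$, so $R_G(\xi)$ is an element of $H^3(K,\,\mathbb{Q}/\mathbb{Z}(2))$ which is locally trivial at every divisorial place. In the $p$-adic arithmetic case Theorem$\;$\ref{thm1p3temp}(i) directly gives $R_G(\xi)=0$. In the local henselian case Theorem$\;$\ref{thm1p3temp}(ii) only covers the prime-to-$p$ torsion of $H^3$, so here I would use the key fact that the order $n_G$ of the Rost invariant has prime factors contained in $S(G)$ (cf. the discussion at the end of (\ref{para1p5NEWTEMP})). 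Since we assume $p\notin S(G)$, $n_G$ is prime to $p$, so $R_G(\xi)$ lies in $H^3(K,\,\mu_{n_G}^{\otimes 2})$ and Saito's theorem applies to conclude $R_G(\xi)=0$. Combining with Theorem$\;$\ref{thm3p5NEW} yields $\xi=1$, as desired.

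The main obstacle in this argument is precisely the henselian $p$-part of $H^3$: Saito's injectivity fails in general for the $p$-primary piece, so one must know independently that $R_G(\xi)$ has order prime to $p$. This is exactly what $p\notin S(G)$ buys, and it explains why the hypothesis has to be imposed in the local henselian formulation but not in the $p$-adic arithmetic one (where Kato's theorem handles all torsion at once).
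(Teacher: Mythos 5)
Your argument is essentially identical to the paper's: chase $\xi$ through the Rost invariant, use Theorem~\ref{thm3p5NEW} for triviality of the kernel of $R_G$, and use Theorem~\ref{thm1p3temp} (Kato in the arithmetic case, Saito on prime-to-$p$ torsion in the henselian case) for injectivity of the $H^3$ map on divisorial places. The only superfluous step is the Weil-restriction reduction at the start, since the theorem is already stated for absolutely simple $G$; otherwise your account, including the observation that $p\notin S(G)$ is exactly what confines $R_G(\xi)$ to prime-to-$p$ torsion in the henselian case, matches the paper's terse diagram-chase proof.
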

\begin{proof}
The result follows from the following commutative diagram
\[
\begin{CD}
H^1(K\,,\,G) @>>> \prod_{v\in\Omega_A}H^1(K_v,\,G) \\
@VVV @VVV\\
H^3(K\,,\,\mu_n^{\otimes 2}) @>>>
\prod_{v\in\Omega_A}H^3(K_v,\,\mu_n^{\otimes 2})
\end{CD}
\]where the vertical maps have trivial kernel by Theorem$\;$\ref{thm3p5NEW} and the bottom horizontal map is
injective by Theorem$\;$\ref{thm1p3temp}.
\end{proof}

\subsection{Groups of type ${}^1A_n^*$}

For groups of inner type $A_n^*$, the proof is essentially the same
as the quasi-split case.

\begin{thm}\label{thm1p4temp}
Let $K$ be the function field of a $p$-adic arithmetic surface or a
local henselian surface with finite residue field of characteristic
$p$. Let $A$ a central simple $K$-algebra of square-free index $n$
and $G=\bfSL_1(A)$. Assume $p\nmid n$ in the local henselian case.

Then the natural map
\[
H^1(K,\,G)\lra \prod_{v\in\Omega_A}H^1(K_v\,,\,G)
\]is injective.
\end{thm}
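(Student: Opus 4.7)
The plan is to imitate the argument for the quasi-split case (Theorem$\;$\ref{thm3p6NEW}), replacing the general Rost invariant injectivity (Theorem$\;$\ref{thm3p5NEW}) by the specific reduced norm criterion of Merkurjev--Suslin. The only non-formal input we need is that the Rost invariant of $G=\bfSL_1(A)$ has trivial kernel when $\ind(A)$ is square-free.

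First I would identify $H^1(K,G)$ explicitly. By Hilbert$\;$90 for $\bfSL_1(A)$ one has a canonical bijection $H^1(K,G)\cong K^*/\Nrd(A^*)$, and under this identification the Rost invariant
\[
R_G\,:\;H^1(K,G)\lra H^3(K,\mathbb{Q}/\mathbb{Z}(2))
\]is (up to sign) the cup-product map $a\,\mathrm{mod}\,\Nrd(A^*)\mapsto (a)\cup[A]$, which lands in the $n$-torsion subgroup $H^3(K,\mu_n^{\otimes 2})$ since $\exp(A)\mid n$. Because $\ind(A)=n$ is square-free, the exponent equals the index, and the Merkurjev--Suslin reduced norm theorem (\cite{Suslin85}, and its square-free-index refinement obtained by decomposing $A$ into a tensor product of algebras of prime index) gives the criterion
\[
a\in\Nrd(A^*)\;\Longleftrightarrow\; (a)\cup [A]=0\;\text{ in }\; H^3(K,\mu_n^{\otimes 2}).
\]In other words, $R_G$ has trivial kernel.

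With this in hand, the theorem follows from the commutative diagram
\[
\begin{CD}
H^1(K,G) @>>> \prod_{v\in\Omega_A}H^1(K_v,G) \\
@VR_GVV @VV{\prod R_G}V \\
H^3(K,\mu_n^{\otimes 2}) @>>> \prod_{v\in\Omega_A}H^3(K_v,\mu_n^{\otimes 2}).
\end{CD}
\]The left vertical arrow has trivial kernel by the previous paragraph, and the bottom horizontal arrow is injective by Theorem$\;$\ref{thm1p3temp}: in the $p$-adic arithmetic case this is Kato's theorem, while in the local henselian case Saito's theorem applies because our hypothesis $p\nmid n$ ensures that $n$ is invertible in the residue field. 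A diagram chase then shows that the top horizontal map has trivial kernel, which is what we had to prove.

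There is essentially no hard step here: the whole argument is a formal combination of two black boxes, namely the Merkurjev--Suslin norm criterion for algebras of square-free index and the Kato--Saito Hasse principle for $H^3$. The one place that requires attention is the local henselian case, where Theorem$\;$\ref{thm1p3temp}(ii) is only stated for coefficients of order prime to $p$; this is exactly why the hypothesis $p\nmid n$ is imposed, and it is harmless because $n=\ind(A)$ is an element of $S(G)$ in the sense of$\;$(\ref{para1p5NEWTEMP}).
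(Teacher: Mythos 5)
Your proof is correct and follows essentially the same route as the paper: identify $H^1(K,\bfSL_1(A))$ with $K^*/\Nrd(A^*)$, invoke the Merkurjev--Suslin reduced-norm criterion (Suslin's Theorem$\;$24.4) to see that the Rost invariant $\lambda\mapsto(\lambda)\cup(A)$ has trivial kernel over $K$ and over each $K_v$, and then chase the commutative diagram using the Kato/Saito Hasse principle for $H^3$ exactly as in the quasi-split case. The paper states this quite compactly, but the content is identical to what you wrote.
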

\begin{proof}A well-known theorem of Suslin (\cite[Thm.$\;$24.4]{Suslin85}) implies that under the assumptions of the theorem,
the Rost invariant map
\[
H^1(E\,,\,\bfSL_1(A))=E^*/\Nrd(A^*)\lra H^3(E\,,\,\mu_n^{\otimes
2})\;;\quad \lambda\longmapsto (\lambda)\cup (A)
\]is injective for $E=K$ or $K_v$. An argument similar to the proof of
Theorem$\;$\ref{thm3p6NEW} yields the result.
\end{proof}

\subsection{Groups of type $C_n^*$}

\begin{lemma}\label{lemma2p1temp}
Let $K$ be the function field of a $p$-adic arithmetic surface or a
local henselian surface with finite residue field of characteristic
$p$. Assume $p\neq 2$ in the local henselian case.

Then the natural map
\[
I^3(K)\lra \prod_{v\in\Omega_A}I^3(K_v)
\]is injective.
\end{lemma}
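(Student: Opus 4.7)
The plan is to factor the map $I^3(K)\to\prod_{v\in\Omega_A} I^3(K_v)$ through the Arason invariant
\[
e_3\,:\,I^3(K)\lra H^3(K,\mu_2^{\otimes 2})
\]
and reduce to the cohomological Hasse principle established in Theorem~\ref{thm1p3temp}. Concretely, suppose $q\in I^3(K)$ becomes trivial in $I^3(K_v)$ for every $v\in\Omega_A$. By functoriality of $e_3$, the class $e_3(q)\in H^3(K,\mu_2^{\otimes 2})$ restricts to zero in $H^3(K_v,\mu_2^{\otimes 2})$ for each such $v$. Then Theorem~\ref{thm1p3temp}---Kato in the $p$-adic arithmetic case, and Saito in the local henselian case where the hypothesis $p\ne 2$ is precisely what allows us to invoke the statement with $n=2$---forces $e_3(q)=0$, that is, $q\in I^4(K)$.

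The next step is to verify that $I^4(K)=0$. Under our hypotheses, the $u$-invariant of $K$ satisfies $u(K)\le 8$: in the $p$-adic arithmetic case this is due to Parimala--Suresh, extended to arbitrary residue characteristic by Harbater--Hartmann--Krashen and Leep; in the local henselian case with $p\ne 2$ it is established in \cite{Hu11}. Combining this with the Arason--Pfister Hauptsatz---any nonzero anisotropic form in $I^4(K)$ has dimension at least $16$---yields $I^4(K)=0$, since the anisotropic kernel of any element of $I^4(K)$ has dimension at most $u(K)\le 8<16$ and so must vanish. Applied to our $q$, this gives $q=0$ in $W(K)$, proving the lemma.

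The whole argument fits into the commutative square
\[
\begin{CD}
I^3(K) @>{e_3}>> H^3(K,\mu_2^{\otimes 2}) \\
@VVV @VVV \\
\prod_{v\in\Omega_A}I^3(K_v) @>>> \prod_{v\in\Omega_A}H^3(K_v,\mu_2^{\otimes 2})
\end{CD}
\]
in which the top arrow is injective by $I^4(K)=0$ (together with the Merkurjev--Suslin--Rost theorem identifying $\ker e_3$ with $I^4$) and the bottom by Theorem~\ref{thm1p3temp}. The genuinely deep input is the $u$-invariant bound $u(K)\le 8$; everything else reduces to a formal diagram chase, and this is also why $p\ne 2$ must be assumed in the local henselian case (both to have $u(K)\le 8$ available and to apply Saito's theorem at the prime $2$).
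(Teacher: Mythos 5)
Your proof is correct, and the overall strategy---factor $I^3(K)\to\prod_v I^3(K_v)$ through the Arason invariant $e_3$ and invoke the Kato/Saito Hasse principle of Theorem~\ref{thm1p3temp}---is exactly the paper's. The one place you diverge is in establishing that $e_3$ is injective on $I^3(K)$: you deduce $I^4(K)=0$ from the $u$-invariant bound $u(K)\le 8$ (Parimala--Suresh, Leep/HHK, resp.\ Hu) via the Arason--Pfister Hauptsatz, together with the Merkurjev--Suslin--Rost identification $\ker e_3 = I^4$. The paper instead starts from the softer input $\mathrm{cd}_2(K)\le 3$ and cites \cite[Prop.~3.1]{AEJ86}, which directly gives both $I^4(K)=0$ and the injectivity of $e_3$ on $I^3(K)$ without passing through $u$-invariant computations. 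Your route is perfectly valid (and the $u$-invariant bound is used elsewhere in the paper anyway, e.g.\ Corollary~\ref{coro3p4temp}), but the paper's choice is marginally more economical because the bound on cohomological $2$-dimension is elementary for these fields, whereas $u(K)\le 8$ is a comparatively deep theorem. One small remark: in the $p$-adic arithmetic case Parimala--Suresh treat $p\ne 2$ and the general case is due to Leep; your attribution is essentially right but slightly compresses the history.
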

\begin{proof}
Consider the following commutative diagram
\[
\begin{CD}
I^3(K) @>>> \prod_{v\in\Omega_A}I^3(K_v)\\
@V{e_3}VV @VVV\\
H^3(K\,,\,\mathbb{Z}/2) @>>> \prod_{v\in\Omega_A}H^3(K_v\,,\,\mathbb{Z}/2)
\end{CD}
\]where the vertical maps are induced by the Arason invariants.
Since $\mathrm{cd}_2(K)\le 3$, we have $I^4(K)=0$. So the map
\[
e_3\,:\;I^3(K)\lra H^3(K\,,\,\mathbb{Z}/2)
\]is injective by \cite[Prop.$\;$3.1]{AEJ86}. The map
\[
H^3(K\,,\,\mathbb{Z}/2)\lra \prod_{v\in\Omega_A}H^3(K_v\,,\,\mathbb{Z}/2)
\]is injective by Theorem$\;$\ref{thm1p3temp}. The lemma then follows
from the above commutative diagram.
\end{proof}

\begin{thm}\label{thm2p2temp}Let $K$ be the function field of a $p$-adic arithmetic surface or a
local henselian surface with finite residue field of characteristic
$p$. Let $D$ be a quaternion division algebra over $K$ with standard
involution $\tau_0$ and $h$ a nonsingular hermitian form over
$(D,\,\tau_0)$. Assume $p\neq 2$ in
the local henselian case. Let $G=\bfU(h)$ be the unitary group of
the hermitian form $h$.

Then the natural map
\[
H^1(K\,,\,G)\lra \prod_{v\in\Omega_A}H^1(K_v\,,\,G)
\]is injective.
\end{thm}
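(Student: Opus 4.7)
The plan is to translate the Hasse principle for $\mathbf{U}(h)$-torsors into a local-global question for quadratic forms, and then invoke Lemma~\ref{lemma2p1temp}. First, $H^1(K,\,G) = H^1(K,\,\mathbf{U}(h))$ classifies isomorphism classes of hermitian forms $h'$ over $(D,\,\tau_0)$ having the same rank $r$ as $h$, and the kernel of the Hasse map consists of classes represented by forms $h'$ with $h'_{K_v} \cong h_{K_v}$ for every $v\in\Omega_A$. By the trace form description recalled in (\ref{para2p3NEW}), the assignment $f \mapsto q_f$ is an injection $W(D,\,\tau_0) \hookrightarrow W(K)$ with image $n_D\cdot W(K)$, and two hermitian forms over $(D,\,\tau_0)$ are isomorphic iff their trace forms are. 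Hence the theorem will follow once I prove $q_{h'} \cong q_h$ as quadratic forms over $K$.

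Next I would diagonalize $h \cong \langle\lambda_1,\dotsc,\lambda_r\rangle$ and $h' \cong \langle\mu_1,\dotsc,\mu_r\rangle$, so that $q_h = \langle\lambda_1,\dotsc,\lambda_r\rangle \otimes n_D$ and $q_{h'} = \langle\mu_1,\dotsc,\mu_r\rangle \otimes n_D$ share the common dimension $4r$. Setting $\alpha := q_{h'} - q_h = \psi\cdot n_D \in W(K)$ with $\psi := \langle\mu_1,\dotsc,\mu_r\rangle - \langle\lambda_1,\dotsc,\lambda_r\rangle$, the vanishing of $\dim\psi$ forces $\psi\in I(K)$, while $n_D$ being a $2$-fold Pfister form gives $n_D\in I^2(K)$; hence $\alpha\in I^3(K)$. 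By Witt cancellation, $q_{h'}\cong q_h$ will follow once I show $\alpha = 0$ in $W(K)$.

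The local hypothesis $h'_{K_v}\cong h_{K_v}$ translates directly into $\alpha = 0$ in $W(K_v)$ for every $v\in\Omega_A$. Applying Lemma~\ref{lemma2p1temp} (the $I^3$-Hasse principle) then forces $\alpha = 0$ in $W(K)$, so $q_{h'} \cong q_h$ and hence $h'\cong h$ over $(D,\,\tau_0)$, proving the theorem.

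The substantive tool is the $I^3$-Hasse principle, which itself rests on the injectivity of the Arason invariant in cohomological $2$-dimension $\le 3$ and on the Kato--Saito Hasse principle for $H^3$ (Theorem~\ref{thm1p3temp}). The feature specific to type $C_n^*$ that makes the argument short is Scharlau's identification of $W(D,\,\tau_0)$ with the principal ideal $n_D\cdot W(K)$: it guarantees the obstruction $\alpha$ lies automatically in $I^3(K)$ rather than merely in $I^2(K)$, so no further cohomological work is required beyond Lemma~\ref{lemma2p1temp}.
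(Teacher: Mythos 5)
Your proposal is correct and follows essentially the same route as the paper's proof: reduce to trace forms via Scharlau's identification $W(D,\tau_0)\hookrightarrow W(K)$, note the difference $q_{h'}-q_h$ lies in $I^3(K)$ because it equals an even-dimensional form times the $2$-fold Pfister form $n_D$, and conclude by the $I^3$-Hasse principle of Lemma~\ref{lemma2p1temp}. The only cosmetic difference is that you make the factorization $\alpha=\psi\cdot n_D$ explicit via diagonalization, whereas the paper simply observes that $h_1\bot(-h_2)$ has even rank and cites (\ref{para2p3NEW}).
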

\begin{proof}
The pointed set $H^1(K\,,\,G)=H^1(K\,,\,\bfU(h))$ classifies up to
isomorphism hermitian forms over $(D,\,\tau_0)$ of the same rank as
$h$. Let $h_1$ and $h_2$ be hermitian forms over $(D\,,\,\tau_0)$ of
the same rank as $h$. Put $h'=h_1\bot (-h_2)$. Note that $h'$ has
even rank, so the class of $q_{h'}$ in the Witt group $W(K)$ lies in
the subgroup $I^3(K)=I(K)\cdot I^2(K)$ (cf. (\ref{para2p3NEW})).
Thus
\[
[q_{h_1}]-[q_{h_2}]=[q_{h'}]\,\in \,I^3(K)\,.
\]If $(h_1)_v\cong (h_2)_v$ for all $v\in\Omega_A$, then by
Lemma$\;$\ref{lemma2p1temp}, $[q_{h'}]=0\in I^3(K)$. This implies
that $q_{h_1}\cong q_{h_2}$ over $K$. Two hermitian forms over
$(D\,,\,\tau_0)$ are isomorphic if and only if their trace forms are
isomorphic as quadratic forms (cf. (\ref{para2p3NEW})). So we get
from the above that $h_1\cong h_2$, proving the theorem.
\end{proof}

\subsection{Groups of type $G_2$ or $F_4^{red}$}

\begin{thm}\label{thm6p1temp}
Let $K$ be the function field of a $p$-adic arithmetic surface or a
local henselian surface with finite residue field of characteristic
$p$. Let $G$ be an absolutely simple  simply connected
 group of type $G_2$ over $K$. Assume $p\neq 2$ in the local
 henselian case.

 Then the natural map
\[
H^1(K\,,\,G)\lra \prod_{v\in\Omega_A}H^1(K_v\,,\,G)
\]has a trivial kernel.
\end{thm}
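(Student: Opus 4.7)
The plan is to reduce the theorem to a Hasse principle for $3$-fold Pfister forms over $K$, in the same spirit as the proof of Theorem~\ref{thm2p2temp} for groups of type $C_n^*$. Since $G$ is of type $G_2$, I would write $G = \mathbf{Aut}_{\mathrm{alg}}(C)$ for some Cayley $K$-algebra $C$; by the classical correspondence (cf. \cite{Schar}), $H^1(K, G)$ is then in bijection with the set of isomorphism classes of Cayley $K$-algebras, with $[C]$ as the distinguished base point, and each Cayley algebra $C'$ is determined up to isomorphism by its norm form $\pi_{C'}$, a $3$-fold Pfister form over $K$. Proving the theorem thus reduces to showing: if $\pi$ and $\pi'$ are $3$-fold Pfister forms over $K$ with $\pi_{K_v} \cong \pi'_{K_v}$ for every $v \in \Omega_A$, then $\pi \cong \pi'$ over $K$.

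Given two such forms, I would consider the Witt class $[\pi] - [\pi'] \in W(K)$. As a difference of two classes in $I^3(K)$ it lies in $I^3(K)$, and by hypothesis it vanishes in $W(K_v)$ for every $v \in \Omega_A$. Lemma~\ref{lemma2p1temp} then forces $[\pi] = [\pi']$ in $W(K)$. Two $n$-fold Pfister forms with equal Witt class are isomorphic: both are hyperbolic when the common Witt class is zero, and otherwise both are anisotropic of the same dimension $2^n$ and hence coincide with their respective anisotropic parts. This gives $\pi \cong \pi'$, whence $C' \cong C$ under the norm form bijection, so the original class in $H^1(K, G)$ is trivial.

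I do not foresee any serious obstacle: the argument is an assembly of standard ingredients, namely the classification of Cayley algebras by their norm forms, the elementary fact that Pfister forms with equal Witt class are isomorphic, and Lemma~\ref{lemma2p1temp} (which itself rests on Theorem~\ref{thm1p3temp} of Kato and Saito together with the injectivity of the Arason invariant $e_3$). The hypothesis $p \neq 2$ in the local henselian case enters only through Lemma~\ref{lemma2p1temp} and ensures $\mathrm{char}(K) \neq 2$ so that the theory of quadratic forms applies without modification; in the $p$-adic arithmetic case $\mathrm{char}(K) = 0$, so no extra assumption is needed. An equivalent presentation would instead invoke the Rost invariant $R_G : H^1(K, G) \to H^3(K, \mathbb{Z}/2)$ for type $G_2$ (which is known to have trivial kernel over any field of characteristic $\neq 2$, since it equals the Arason invariant of the norm form) and combine this with Theorem~\ref{thm1p3temp}, exactly as in the proof of Theorem~\ref{thm3p6NEW}; but the Pfister form version above has the virtue of being self-contained given what is already in the paper.
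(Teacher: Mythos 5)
Your proposal is correct and follows essentially the same route as the paper: pass to Cayley algebras, classify them by their norm forms (which are $3$-fold Pfister forms), and conclude by the injectivity of $I^3(K)\to\prod_v I^3(K_v)$ from Lemma~\ref{lemma2p1temp}. The only difference is that you spell out the elementary step that two Pfister forms with the same Witt class are isomorphic, which the paper leaves implicit.
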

\begin{proof}
The group $G$ is isomorphic to $\mathbf{Aut}_{alg}(C)$ for some
Cayley algebra $C$ over $K$. Let $\xi\in H^1(K\,,\,G)$ be a locally
trivial class and let $C'$ be a Cayley algebra which represents
$\xi$. We have $C_{K_v}\cong C'_{K_v}$ for every $v\in\Omega_A$ by
hypothesis and we want to show $C\cong C'$ over $K$. Since two
Cayley algebras are isomorphic if and only if their norm forms are
isomorphic and since the norm form of a Cayley algebra is a 3-fold
Pfister form (cf. \cite[p.460]{KMRT}), the result follows easily
from Lemma$\;$\ref{lemma2p1temp}.
\end{proof}

\begin{thm}\label{thm6p2temp}
Let $K$ be the function field of a $p$-adic arithmetic surface or a
local henselian surface with finite residue field of characteristic
$p$. Assume $p\nmid 6$ in the local henselian case. Let
$G=\mathbf{Aut}_{alg}(J)$ be the automorphism group of a
\emph{reduced} $27$-dimensional exceptional Jordan algebra over $K$.

Then the natural map
\[
H^1(K\,,\,G)\lra \prod_{v\in\Omega_A}H^1(K_v\,,\,G)
\]has a trivial kernel.
\end{thm}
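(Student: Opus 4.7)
The plan is to reduce the problem to the Serre--Rost cohomological classification of reduced Albert algebras combined with the Hasse principle for degree-$3$ Galois cohomology. Recall that to any $27$-dimensional exceptional Jordan algebra $J'$ over a field $E$ of characteristic prime to $6$ one associates three cohomological invariants, $g_3(J')\in H^3(E,\mu_3^{\otimes 2})$, $f_3(J')\in H^3(E,\mathbb{Z}/2)$ and $f_5(J')\in H^5(E,\mathbb{Z}/2)$, with the property that $J'$ is reduced if and only if $g_3(J')=0$, and two reduced Albert algebras are isomorphic if and only if their invariants $f_3$ and $f_5$ coincide (cf.\ \cite[\S40]{KMRT}, \cite[Appendix B]{GMS}).

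Let $\xi\in H^1(K,G)$ be a locally trivial class, and let $J'$ be the Albert algebra over $K$ representing $\xi$; the goal is to prove $J'\cong J$. By hypothesis $J'_{K_v}\cong J_{K_v}$ for every $v\in\Omega_A$. Since $J$ is reduced, so is $J'_{K_v}$, and hence $g_3(J')_{K_v}=0$ for every $v$. Because $p\nmid 6$, the coefficient module $\mu_3^{\otimes 2}$ is a well-behaved summand of $\mathbb{Q}/\mathbb{Z}(2)$, and Theorem~\ref{thm1p3temp} forces $g_3(J')=0$ globally. Thus $J'$ is also reduced.

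Next, since $p\ne 2$, the field $K$ satisfies $\mathrm{cd}_2(K)\le 3$ in both the arithmetic and the local henselian settings (the same bound that was used in the proof of Lemma~\ref{lemma2p1temp}), so $H^5(K,\mathbb{Z}/2)=0$ and consequently $f_5(J)=f_5(J')=0$. The local isomorphisms $J_{K_v}\cong J'_{K_v}$ give $f_3(J)_{K_v}=f_3(J')_{K_v}$ in $H^3(K_v,\mathbb{Z}/2)$ for every $v\in\Omega_A$, and a second application of Theorem~\ref{thm1p3temp} (to the coefficient module $\mu_2^{\otimes 2}=\mathbb{Z}/2$) yields $f_3(J)=f_3(J')$ over $K$. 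The classification recalled above then forces $J'\cong J$, i.e., $\xi$ is trivial.

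The argument is therefore essentially formal once one has at hand the following three ingredients: (i) the Serre--Rost classification of reduced Albert algebras by the pair $(f_3,f_5)$; (ii) the vanishing $H^5(K,\mathbb{Z}/2)=0$, coming from the bound on $\mathrm{cd}_2(K)$; and (iii) the Kato--Saito Hasse principle of Theorem~\ref{thm1p3temp}, used both for the $3$-primary part (to show that $J'$ is automatically reduced) and for the $2$-primary part (to compare the $f_3$ invariants). No individual step is technically hard; the only conceptual subtlety is the need to first promote $J'$ to a reduced Albert algebra before the invariant-based classification can be applied, and this is precisely what makes it important to have the Hasse principle available in the full group $H^3(K,\mathbb{Q}/\mathbb{Z}(2))$, not only in its $2$-primary part.
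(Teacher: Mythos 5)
Your proposal is correct and follows essentially the same argument as the paper: invoke the classification of reduced Albert algebras by the invariants $(f_3,f_5)$, kill $f_5$ using $\mathrm{cd}_2(K)\le 3$, and apply the Kato--Saito Hasse principle of Theorem~\ref{thm1p3temp} to compare both the $g_3$ and $f_3$ invariants. The only difference is that you spell out the logical order (first deduce $g_3(J')=0$, then compare $f_3$) slightly more explicitly than the paper does.
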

\begin{proof}
Recall that (cf. \cite[$\S$9]{Ser94}) to each exceptional Jordan
algebra $J'$ of dimension 27 over a field $F$ of characteristic not
2 or 3, one can associate three invariants
\[
f_3(J')\in H^3(F\,,\,\mathbb{Z}/2)\,,\;\;f_5(J')\in
H^5(F\,,\,\mathbb{Z}/2)\quad\text{and }\quad g_3(J')\in H^3(F\,,\,\mathbb{Z}/3)\,.
\]One has $g_3(J')=0$ if and only if $J'$ is reduced. Two reduced exceptional Jordan algebras are isomorphic if and only if their
$f_3$ and $f_5$ invariants are the same.

Now our base field $K$ has cohomological 2-dimension
$\mathrm{cd}_2(K)=3$. So the invariant $f_5(J')$ is always zero. Let
$\xi\in H^1(K\,,\,G)$ correspond to the isomorphism class of an
exceptional Jordan algebra $J'$ over $K$. Assume that $\xi$ is
locally trivial in $H^1(K_v\,,\,G)$ for every $v\in\Omega_A$. By
Theorem$\;$\ref{thm1p3temp}, we have $f_3(J)=f_3(J')$ and
$g_3(J)=g_3(J')$. Since $J$ is reduced by assumption, we have
$g_3(J')=0$ and hence $J'$ is reduced. Thus it follows that $J\cong
J'$ over $K$, showing that $\xi$ is trivial in $H^1(K\,,\,G)$ as
desired.
\end{proof}

\section{Spin groups of quadratic forms}\label{sec4}

\newpara\label{para3p1temp} Let $E$ be a field of characteristic
different from 2 and $q$ a nonsingular quadratic form of rank $\ge
3$ over $E$. Recall that $\Sn(q_E)$ denotes the image of the spinor
norm map
\[
\bfSO(q)(E)\lra E^*/E^{*2}\,,\,
\]i.e., the connecting map associated to the cohomology of the exact sequence
\[
1\lra \mu_2\lra \bfSpin(q)\lra \bfSO(q)\lra 1\,.
\]

\begin{prop}\label{prop3p2temp}Let $K$ be the function field of a $p$-adic arithmetic surface or a
local henselian surface with finite residue field of characteristic
$p$. Assume $p\neq 2$ in the local henselian case. Let $q$ be a
nonsingular quadratic form of rank $3$ or $4$ over $K$.

Then the natural map
\[
\frac{K^*/K^{*2}}{\Sn(q_K)}\lra
\prod_{v\in\Omega_A}\frac{K_v^*/K_v^{*2}}{\Sn(q_{K_v})}
\]is injective.
\end{prop}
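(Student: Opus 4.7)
The plan is to reduce the statement to the Hasse principle for groups of type ${}^1A_n^*$ already established in Theorem \ref{thm1p4temp}, using the classical exceptional isomorphisms for $\bfSpin$ in rank $3$ and $4$.

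First, for $E = K$ or $E = K_v$ I would write down the relevant piece
\[
\bfSO(q)(E) \xrightarrow{\;\delta\;} H^1(E,\mu_2) = E^*/E^{*2} \xrightarrow{\;\partial\;} H^1(E,\bfSpin(q)) \lra H^1(E,\bfSO(q))
\]
of the long cohomology sequence of $1 \to \mu_2 \to \bfSpin(q) \to \bfSO(q) \to 1$. Since $\mathrm{Im}(\delta) = \Sn(q_E)$ by definition, $\partial$ induces a natural injection of pointed sets
\[
(E^*/E^{*2})/\Sn(q_E) \hookrightarrow H^1(E,\bfSpin(q)),
\]
functorial in $E$. Placing the $K$ and $K_v$ versions in a commutative square, the proposition reduces to showing that any class of $H^1(K,\bfSpin(q))$ lying in the image of $\partial$ and trivial in every $H^1(K_v,\bfSpin(q))$ must itself be trivial.

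Second, I would invoke the standard low-rank identifications (cf.\ \cite[\S 15, \S 26]{KMRT}): for $\rank q = 3$, $\bfSpin(q) \cong \bfSL_1(Q)$ with $Q = C_0(q)$ a quaternion $K$-algebra; for $\rank q = 4$ with trivial discriminant, $\bfSpin(q) \cong \bfSL_1(Q_1) \times \bfSL_1(Q_2)$ for the two quaternion simple factors $Q_1, Q_2$ of $C_0(q)$; and for $\rank q = 4$ with $\disc q = d$ nontrivial, $\bfSpin(q) \cong R_{L/K}\bfSL_1(Q)$ with $L = K(\sqrt d)$ and $Q$ a quaternion $L$-algebra. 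In every case $\bfSpin(q)$ is thus a product of factors each of the form $\bfSL_1$ of a quaternion algebra, possibly after a Weil restriction along a quadratic extension of $K$. Since quaternion algebras have index dividing $2$, hence square-free, and since the hypothesis $p \neq 2$ in the local henselian case gives $p \nmid 2 = \ind(Q)$, the hypotheses of Theorem \ref{thm1p4temp} are satisfied.

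Third, I would apply Theorem \ref{thm1p4temp} directly in the rank $3$ and trivial-discriminant rank $4$ cases. In the Weil-restriction case, Shapiro's lemma gives $H^1(K,R_{L/K}\bfSL_1(Q)) = H^1(L,\bfSL_1(Q))$ and $H^1(K_v,R_{L/K}\bfSL_1(Q)) = \prod_{w\mid v} H^1(L_w,\bfSL_1(Q))$; by (\ref{para3p2NEW}), $L$ is again a field of the same arithmetic type as $K$ whose divisorial valuations are precisely those lying over valuations in $\Omega_A$, so Theorem \ref{thm1p4temp} applied over $L$ concludes the argument. The only point requiring care is the identification in the second step, but it is entirely classical and records only the structure of the even Clifford algebra $C_0(q)$ in low rank.
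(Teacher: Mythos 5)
Your argument is correct and is essentially the same as the paper's, only recast in different language. The paper works directly with the spinor norm group: for rank~$3$ it identifies $\Sn(q_E)$ with $\Nrd(D^*)$ modulo squares for the quaternion algebra $D$ associated to $q$; for rank~$4$ with trivial discriminant it uses the same description; and for rank~$4$ with discriminant $d\neq 1$ it uses $\Sn(q_K)=\Nrd(D_{K(\sqrt d)}^*)\cap K^*$ modulo squares and then applies Theorem~\ref{thm1p4temp} over the quadratic extension $K(\sqrt d)$. You instead embed the cokernel $(K^*/K^{*2})/\Sn(q_K)$ into $H^1(K,\bfSpin(q))$ and then identify $\bfSpin(q)$, via the even Clifford algebra, with $\bfSL_1(Q)$, $\bfSL_1(Q_1)\times\bfSL_1(Q_2)$, or $R_{L/K}\bfSL_1(Q)$. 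Since $H^1$ of these groups is exactly the relevant reduced-norm cokernel (after Shapiro's lemma in the restriction-of-scalars case), your chain of reductions is literally the same computation, just organized through cohomology of $\bfSpin(q)$ rather than through an explicit description of the spinor norm group. Both routes ultimately rest on the structure of $C_0(q)$ in rank~$3$ and~$4$, on the fact that $L=K(\sqrt d)$ inherits the arithmetic type of $K$ with compatible divisorial valuations (as recorded in~(\ref{para3p2NEW})), and on Theorem~\ref{thm1p4temp}. Your presentation is perhaps slightly more uniform across the three cases, but it does not constitute a genuinely different method.
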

\begin{proof}
If $\rank(q)=3$, we may assume $q=\langle 1\,,\,a\,,\,b\rangle$
after scaling. Let $D$ be the quaternion algebra $(-a\,,\,-b)_K$
over $K$. Then $\Sn(q)=\Nrd(D^*)$ modulo squares. The result then
follows from Theorem$\;$\ref{thm1p4temp}.

Assume next $\rank(q)=4$. If $\disc(q)=1$, we may assume after
scaling $q=\langle 1\,,\,a\,,\,b\,,\,ab\rangle$. Put
$D=(-a\,,\,-b)_K$. Then $\Sn(q)=\Nrd(D^*)$ and the result follows
again from Theorem$\;$\ref{thm1p4temp}. If $d=\disc(q)$ is
nontrivial in $K^*/K^{*2}$, we may assume $q=\langle
1\,,\,a\,,\,b\,,\,abd\rangle$. Then
\[
\Sn(q_K)=\Nrd\left(D_{K(\sqrt{d})}^*\right)\cap K^*\quad\text{modulo
squares}
\]by \cite[p.214, Coro.$\;$15.11]{KMRT}. The field $K(\sqrt{d})$ is
a field of the same type as $K$ (cf. (\ref{para3p2NEW})). Let
$\Omega_{A'}$ denote the set of divisorial valuations of
$K'=K(\sqrt{d})$. If $\al\in K^*$ lies in $\Sn(q_{K_v})$ for all
$v\in\Omega_A$, then $\al$ is a reduced norm from $D_{K'_w}$ for all
$w\in\Omega_{A'}$. By Theorem$\;$\ref{thm1p4temp}, $\al$ is a
reduced norm from $D_{K'}=D_{K(\sqrt{d})}$. This finishes the proof.
\end{proof}

Recall that the $u$-invariant $u(E)$ of a field $E$ of characteristic $\neq 2$ is the supremum of dimensions of anisotropic
quadratic forms over $E$ (so $u(E)=\infty$, if such dimensions can be arbitrarily large).

\begin{prop}\label{prop3p3temp}
Let $E$ be a field of characteristic $\neq 2$ and $q$ a nonsingular
quadratic form of rank $r$ over $E$. Assume $u(E)<2r$.

Then $\Sn(q_E)=E^*/E^{*2}$, i.e., the spinor norm map
\[
\bfSO(q)(E)\lra E^*/E^{*2}
\] is surjective.
\end{prop}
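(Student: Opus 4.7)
The plan is to use the explicit description of the spinor norm recalled in (\ref{para4p1temp}): every element of $\bfSO(q)(E)$ is a product of an even number of hyperplane reflections $\tau_{v_i}$ along anisotropic vectors, and its spinor norm is the class of $\prod q(v_i)$ in $E^*/E^{*2}$. In particular, for any anisotropic vectors $v_1,v_2$, the composition $\tau_{v_1}\tau_{v_2}$ lies in $\bfSO(q)(E)$ with spinor norm $q(v_1)q(v_2) \bmod E^{*2}$. Therefore, to prove surjectivity of $\delta$, it suffices to show that every $\lambda \in E^*$ can be written, modulo $E^{*2}$, as a product $q(v_1)q(v_2)$ for two anisotropic vectors $v_1,v_2$.

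The case $r=1$ is trivial since $u(E)<2$ forces $E^*=E^{*2}$. So assume $r\ge 2$, and fix $\lambda\in E^*$. I would consider the quadratic form
\[
\varphi := q \perp \langle -\lambda\rangle\cdot q
\]of rank $2r$. By hypothesis $\dim\varphi = 2r > u(E)$, so $\varphi$ is isotropic. Thus there exist vectors $u,v$, not both zero, with $q(u)-\lambda q(v)=0$, i.e.\ $q(u)=\lambda q(v)$.

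In the generic case both $u$ and $v$ are anisotropic for $q$, and then
\[
\lambda \;=\; \frac{q(u)}{q(v)} \;\equiv\; q(u)\cdot q(v) \pmod{E^{*2}},
\]so $\lambda\in\Sn(q_E)$ as the spinor norm of $\tau_u\tau_v$. The main point requiring separate treatment is the degenerate case where one of $q(u),q(v)$ vanishes with the corresponding vector nonzero; then $q$ itself is isotropic. But a nonsingular isotropic quadratic form of rank $\ge 2$ is universal, so $q$ represents both $\lambda$ and $1$: choose anisotropic $v_1,v_2$ with $q(v_1)=\lambda$ and $q(v_2)=1$, giving $\lambda = q(v_1)q(v_2)$ and again $\lambda\in\Sn(q_E)$. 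This exhausts all cases, so the spinor norm map is surjective. The only mildly subtle step is the degenerate case, but it dissolves immediately once one invokes universality of an isotropic form of rank $\ge 2$.
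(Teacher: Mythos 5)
Your proof is correct and is essentially the same as the paper's: both use the isotropy of $q \perp (-\lambda\cdot q)$ (which has dimension $2r > u(E)$) to write $\lambda$ as $q(u)q(v)$ modulo squares, and both fall back on universality of an isotropic nonsingular form when needed. The paper organizes this by splitting at the outset into the cases $q$ isotropic / $q$ anisotropic, whereas you run the $u$-invariant argument first and handle the degenerate case afterwards, but the content is identical.
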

\begin{proof}
The image $\Sn(q_E)$ of the spinor norm map consists of elements of
the form $\prod^{2m}_{i}q(v_i)$, where $v_i$ are anisotropic vectors
for $q$ (cf. (\ref{para4p1temp})). If $q$ is isotropic over $E$, then for every $\al\in E^*$,
there is a vector $v_{\al}$ such that $q(v_{\al})=\al$. Let $v_1$ be
a vector such that $q(v_1)=1$. Then we have
$\al=q(v_{\al}).q(v_1)\in\Sn(q_E)$.

Assume next $q$ is anisotropic. For any $\al\in E^*$, the form
$q\bot (-\al.q)$ is isotropic over $E$ by the assumption on the
$u$-invariant. Hence there are vectors $x,\,y$ such that
$q(x)-\al.q(y)=0$. Since $q$ is anisotropic, we have
$\lambda:=q(y)\in E^*$ and $q(x)\in E^*$. It follows that
\[
\al=q(x).q(y)^{-1}=\lambda^{-2}q(x).q(y)=q(x).q(\lambda^{-1}y)\,\in
\,\Sn(q_E)\,
\]whence the desired result.
\end{proof}

\begin{coro}\label{coro3p4temp}Let $K$ be the function field of a $p$-adic arithmetic surface or a
local henselian surface with finite residue field of characteristic
$p$. Assume $p\neq 2$ in the local henselian case. Let $q$ be a
nonsingular quadratic form of rank $\ge 5$ over $K$.

Then $\Sn(q_K)=K^*/K^{*2}$, i.e., the spinor norm map
\[
\bfSO(q)(K)\lra K^*/K^{*2}
\]
 is surjective.
\end{coro}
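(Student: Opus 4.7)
The plan is a direct reduction to Proposition \ref{prop3p3temp}. That proposition asserts that the spinor norm map is surjective whenever $u(E)<2r$, where $r=\rank(q)$. Since we are assuming $r\ge 5$, it suffices to exhibit the bound $u(K)\le 9$; in fact the sharper bound $u(K)\le 8$ holds in both of our settings, and both bounds have already been recorded in the literature cited in the introduction.

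More precisely, in the $p$-adic arithmetic case the theorem of Parimala--Suresh \cite{PaSu} (see also \cite{Le10} and \cite{HHK}) asserts that every anisotropic quadratic form over $K$ has dimension at most $8$, so $u(K)\le 8$. In the local henselian case with $p\neq 2$, the corresponding bound $u(K)\le 8$ is the main quadratic form result of \cite{Hu11}. In either situation one has $u(K)\le 8<10\le 2r$, so Proposition \ref{prop3p3temp} applies and gives $\Sn(q_K)=K^*/K^{*2}$.

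Since the whole argument is just the combination of Proposition \ref{prop3p3temp} with a known $u$-invariant bound for $K$, there is no genuine obstacle internal to the paper: the only ``hard'' input is the computation of $u(K)$, which is imported from the cited works. I would present the argument as a two-line proof: cite the $u$-invariant bound, invoke Proposition \ref{prop3p3temp}.
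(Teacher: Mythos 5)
Your proof is correct and is exactly the paper's argument: cite the $u$-invariant bound $u(K)=8$ (Parimala--Suresh and Leep in the arithmetic case, \cite[Thm.\;1.2]{Hu11} in the local henselian case) and apply Proposition~\ref{prop3p3temp} with $2r\ge 10>8$. No differences to report.
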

\begin{proof}In the $p$-adic arithmetic case, we have $u(K)=8$ by \cite{PaSu} (if $p\neq 2$) or
\cite{Le10} (see also \cite{HHK}). In the local henselian case, it
is proved in \cite[Thm.$\;$1.2]{Hu11} that $u(K)=8$. The result then
follows immediately from Proposition$\;$\ref{prop3p3temp}.
\end{proof}

\begin{thm}\label{thm3p5temp}Let $K$ be the function field of a $p$-adic arithmetic surface or a
local henselian surface with finite residue field of characteristic
$p$. Assume $p\neq 2$ in the local henselian case. Let $q$ be a
nonsingular quadratic form of rank $\ge 3$ over $K$ and
$G=\bfSpin(q)$.

$(\oi)$ The natural map
\[
H^1(K\,,\,G)\lra\prod_{v\in\Omega_A}H^1(K_v\,,\,G)
\]has a trivial kernel.

$(\ii)$ The Rost invariant
\[
R_G\,:\;H^1(K\,,\,G)\lra H^3(K\,,\,\mathbb{Q}/\mathbb{Z}(2))
\]has a trivial kernel if $\rank(q)\ge 5$.
\end{thm}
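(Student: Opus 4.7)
The plan is to exploit the exact sequence of algebraic $K$-groups
\[
1\lra \mu_2\lra \bfSpin(q)\lra \bfSO(q)\lra 1
\]
and reduce both statements to the Hasse principle / Arason-invariant injectivity for $I^3(K)$-classes together with the spinor-norm computations already established in this section (Lemma$\;$\ref{lemma2p1temp}, Proposition$\;$\ref{prop3p2temp} and Corollary$\;$\ref{coro3p4temp}). In outline, the image $\bar{\xi}\in H^1(K,\bfSO(q))$ of a class $\xi\in H^1(K,\bfSpin(q))$ is represented by a quadratic form $q'$ of the same rank as $q$, and the very fact that $\xi$ lifts through $\bfSpin$ forces $\disc(q')=\disc(q)$ together with trivial relative Clifford invariant, so that $q'\perp(-q)\in I^3(K)$.

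For part $(\oi)$, let $\xi\in H^1(K,G)$ be locally trivial. Then $q'_{K_v}\cong q_{K_v}$ for every $v$, so the Witt class of $q'\perp(-q)$ vanishes in $I^3(K_v)$ for each $v\in\Omega_A$; Lemma$\;$\ref{lemma2p1temp} then yields $q'\cong q$ over $K$. After fixing such an isomorphism, $\xi$ lies in the fiber of $H^1(K,\bfSpin(q))\to H^1(K,\bfSO(q))$ over the distinguished point, which by the long exact sequence is the quotient of $H^1(K,\mu_2)=K^*/K^{*2}$ by the image $\Sn(q_K)$ of the spinor norm; thus $\xi$ is represented by some $\al\in K^*/K^{*2}$ that is locally a spinor norm. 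When $\rank(q)\ge 5$, Corollary$\;$\ref{coro3p4temp} gives $\Sn(q_K)=K^*/K^{*2}$ and $\xi=0$ immediately; when $\rank(q)\in\{3,4\}$, Proposition$\;$\ref{prop3p2temp} provides the injectivity $\bigl(K^*/K^{*2}\bigr)/\Sn(q_K)\hookrightarrow \prod_{v}\bigl(K_v^*/K_v^{*2}\bigr)/\Sn(q_{K_v})$, forcing $\al\in\Sn(q_K)$ and hence $\xi=0$.

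For part $(\ii)$, assume $R_G(\xi)=0$ and let $q'$ again denote the form attached to $\bar{\xi}$, so that $q'\perp(-q)\in I^3(K)$. Since the underlying central simple algebra here is $D=K$, the ambiguity $H^1(K,\mu_2)\cup (D)$ appearing in the description of the Rost invariant in $(\ref{para2p11NEW})$ vanishes, and $R_G(\xi)$ coincides with the Arason invariant $e_3\bigl(q'\perp(-q)\bigr)\in H^3(K,\mathbb{Z}/2)\subseteq H^3(K,\mathbb{Q}/\mathbb{Z}(2))$. Because $\mathrm{cd}_2(K)\le 3$ gives $I^4(K)=0$, the Arason map $e_3\colon I^3(K)\to H^3(K,\mathbb{Z}/2)$ is injective (by \cite[Prop.$\;$3.1]{AEJ86}), so $q'\cong q$; the argument of part $(\oi)$ applied together with Corollary$\;$\ref{coro3p4temp}, valid since $\rank(q)\ge 5$, then forces $\xi=0$.

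The main subtlety I anticipate is the bookkeeping around the invariants: one must verify that a lift through $\bfSpin(q)$ genuinely kills the relative Clifford invariant of $q'$ against $q$, and for part $(\ii)$ identify $R_G(\xi)$ with $e_3(q'\perp(-q))$, using the triviality of $D=K$ to eliminate the usual indeterminacy in the target of the Rost invariant. Once these identifications are made, the proof is a direct application of the machinery already assembled in Section$\;$\ref{sec4} together with the Kato--Saito-style injectivity encoded in Lemma$\;$\ref{lemma2p1temp}.
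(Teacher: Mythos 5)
Your proposal follows essentially the same route as the paper's own proof: both use the exact sequence $1\to\mu_2\to\bfSpin(q)\to\bfSO(q)\to 1$, identify the image of $\xi$ with a form $q'$ whose difference with $q$ lies in $I^3(K)$ by Merkurjev's theorem, invoke Lemma$\;$\ref{lemma2p1temp} (for part (i)) or injectivity of $e_3$ on $I^3(K)$ (for part (ii)) to conclude $q\cong q'$, and then dispose of the residual $\mu_2$-ambiguity via Proposition$\;$\ref{prop3p2temp} and Corollary$\;$\ref{coro3p4temp}. The details, including the identification of $R_G(\xi)$ with the Arason invariant of $q\perp(-q')$ for $\rank(q)\geq 5$ and the handling of ranks $3$ and $4$ in part (i), all match the paper.
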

\begin{proof}
Consider the exact sequence of algebraic groups
\[
1\lra \mu_2\lra \bfSpin(q)=G\lra \bfSO(q)\lra 1
\]which gives rise to an exact sequence of pointed sets
\begin{equation}\label{eq3p5p1temp}
\bfSO(q)(K)\overset{\delta}{\lra}K^*/K^{*2}\overset{\psi}{\lra}
H^1(K\,,\,\bfSpin(q))\overset{\eta}{\lra} H^1(K\,,\,\bfSO(q))\,.
\end{equation}The image of the map $\eta$ is in bijection with
isomorphism classes of nonsingular quadratic forms $q'$ with the
same rank, discriminant and Clifford invariant as $q$.

Let $\xi\in H^1(K\,,\,G)=H^1(K\,,\,\bfSpin(q))$ with $\eta(\xi)\in
H^1(K\,,\,\bfSO(q))$ corresponding to a quadratic form $q'$. Then in
the Witt group $W(K)$ the class of $q\bot (-q')$ lies in $I^3(K)$ by
Merkurjev's theorem (cf. \cite[p.89, Thm.$\;$2.14.3]{Schar}) and its
Arason invariant $e_3([q\bot (-q')])\in H^3(K\,,\,\mathbb{Z}/2)$ coincides
with Rost invariant $R_G(\xi)$ of $\xi$ when $\rank(q)\ge 5$ (\cite[p.437]{KMRT}).

For (i), assume the canonical image $\xi_v$ of $\xi$ in
$H^1(K_v\,,\,G)$ is trivial for every $v\in\Omega_A$. We have
\[
[q\bot
(-q')]_v=0\,\in\,I^3(K_v)\,,\quad\forall\;v\in\Omega_A\,.
\]By Lemma$\;$\ref{lemma2p1temp}, we have $q\cong q'$ over $K$. This
means that $\xi\in H^1(K,\,G)$ lies in the kernel of
\[
\eta\,:\;\;H^1(K\,,\,G)\lra H^1(K\,,\,\bfSO(q))\,.
\]By the exactness of the sequence \eqref{eq3p5p1temp},
$\xi=\psi(\al)$ for some $\al\in
\mathrm{Coker}(\delta)=\frac{K^*/K^{*2}}{\Sn(q_K)}$. Consider now the
following commutative diagram with exact rows
\[
\begin{CD}
1 @>>> \frac{K^*/K^{*2}}{\Sn(q_K)} @>\psi>> H^1(K\,,\,G) @>\eta>>
H^1(K\,,\,\bfSO(q))\\
&& @VVV @VVV @VVV\\
1 @>>> \prod_{v\in\Omega_A}\frac{K^*_v/K^{*2}_v}{\Sn(q_{K_v})}
@>\psi>> \prod_{v\in\Omega_A}H^1(K_v\,,\,G) @>\eta>>
\prod_{v\in\Omega_A}H^1(K_v\,,\,\bfSO(q))
\end{CD}
\]The canonical image $\al_v$ of $\al$ in $\frac{K^*_v/K^{*2}_v}{\Sn(q_{K_v})}$
is trivial for all $v\in\Omega_A$. From
Proposition$\;$\ref{prop3p2temp} and Corollary$\;$\ref{coro3p4temp},
it follows that $\al=1$ and hence $\xi=\psi(\al)$ is trivial.

For (ii), assume the Rost invariant $R_G(\xi)$ of $\xi$ is trivial.
Then the Arason invariant $e_3([q\bot (-q')])$ is zero. Since
$\mathrm{cd}_2(K)\le 3$, the map $e_3: I^3(K)\to H^3(K,\,\mathbb{Z}/2)$ is
injective. So we get $q\cong q'$ over $K$ and therefore
$\xi=\psi(\al)$ for some $\al\in \frac{K^*/K^{*2}}{\Sn(q_K)}$. When
the rank of $q$ is $\ge 5$, we have $K^*/K^{*2}=\Sn(q_K)$ by
Corollary$\;$\ref{coro3p4temp}. So $\al=1$ and $\xi$ is trivial.
\end{proof}

\begin{remark}\label{remark3p6temp}
Assertion (ii) of Theorem$\;$\ref{thm3p5temp} may be compared with
the following result, which was already known to experts (cf.
\cite[Prop.$\;$5.2]{CTPaSu}): Let $E$ be a field of characteristic
$\neq 2$ and of cohomological 2-dimension $\mathrm{cd}_2(E)\le 3$.
Let $q$ be an \emph{isotropic} quadratic form of rank $\ge 5$ over
$E$. Then the Rost invariant
\[
H^1(E\,,\,\bfSpin(q))\lra H^3(E\,,\,\mathbb{Q}/\mathbb{Z}(2))
\]for the spinor group $\bfSpin(q)$ has a trivial kernel.
\end{remark}

\section{Groups of type $D_n^*$}\label{sec5}

\begin{prop}\label{prop5p1NEW}Let $K$ be the function field of a $p$-adic arithmetic surface or a
local henselian surface with finite residue field of characteristic
$p$. Assume $p\neq 2$ in the local henselian case. Let
$(D\,,\,\sigma)$ be a quaternion division algebra with an orthogonal
involution over $K$ and let $h$ be a hermitian form of rank $\ge 2$
over $(D\,,\,\sigma)$.

Then the natural map
\[
\frac{K^*/K^{*2}}{\Sn(h_K)}\lra
\prod_{v\in\Omega_A}\frac{K^*_v/K^{*2}_v}{\Sn(h_{K_v})}
\]
is injective.
\end{prop}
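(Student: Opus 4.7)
Following the strategy of Proposition \ref{prop3p2temp} in the hermitian setting, the plan is to reduce the Hasse principle for $\Sn(h_K)$ to the local-global principle for reduced norms of $D$ (Theorem \ref{thm1p4temp}), via Merkurjev's norm principle.

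Since $\deg(D)\cdot\rank(h) = 2\rank(h) \ge 4$, paragraph $(\ref{para2p15NEW})$ already gives the inclusion $\Sn(h_E) \subseteq \Nrd(D_E^*) \cdot E^{*2}/E^{*2}$ for $E = K$ and for each completion $E = K_v$. The main technical step is then to establish the reverse inclusion $\Sn(h_E) \supseteq \Nrd(D_E^*) \cdot E^{*2}/E^{*2}$. By Merkurjev's description (Theorem \ref{thm4p2temp}), $\Sn(h_E)$ is generated by $N_{L/E}(L^*)$ over finite $L/E$ with $D_L$ split \emph{and} $h_L$ isotropic, while $\Nrd(D_E^*)$ is generated by $N_{L/E}(L^*)$ over all finite $L/E$ splitting $D$. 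Since $h_L$ is a rank-$2\rank(h) \ge 4$ quadratic form after Morita equivalence over any splitting field $L$ of $D$, one enlarges $L$ to a finite $L'/L$ making $h_{L'}$ isotropic, and uses a transitivity-of-norms argument to conclude that every reduced norm modulo squares lies in $\Sn(h_E)$.

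Once $\Sn(h_E) = \Nrd(D_E^*) \cdot E^{*2}/E^{*2}$ is established, the fact that $K^{*2} \subseteq \Nrd(D^*)$ (since $\Nrd(a) = a^2$ for a scalar $a \in K^*$ in the quaternion algebra $D$) yields a canonical identification $(E^*/E^{*2})/\Sn(h_E) \simeq E^*/\Nrd(D_E^*) = H^1(E, \bfSL_1(D))$. The proposition thus reduces to the injectivity of the natural map $H^1(K, \bfSL_1(D)) \to \prod_{v \in \Omega_A} H^1(K_v, \bfSL_1(D))$, which is precisely Theorem \ref{thm1p4temp} applied to the quaternion algebra $D$ of square-free index $2$; the hypothesis $p \neq 2$ in the local henselian case exactly matches the requirement of that theorem.

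The main obstacle is the reverse inclusion $\Sn(h_E) \supseteq \Nrd(D_E^*) \cdot E^{*2}/E^{*2}$, especially in the low-rank case $\rank(h) = 2$: there $h_L$ is only a rank-$4$ quadratic form over a splitting field $L$, so isotropizing it demands a carefully chosen auxiliary extension while still keeping the given reduced norm in range of the norm map. A robust fallback that avoids this delicacy is to use the Parimala--Sridharan--Suresh key exact sequence (paragraph $(\ref{para2p7NEW})$) applied to a $\sigma$-fixed quadratic subfield $M \subset D$, reducing hermitian forms over $(D, \sigma)$ to quadratic forms over $M$ of rank $2\rank(h)$, and then invoking Proposition \ref{prop3p2temp} over $M$, which is of the same type as $K$ by paragraph $(\ref{para3p2NEW})$.
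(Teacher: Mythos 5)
Your outline for $\rank(h)\ge 3$ is essentially the paper's argument: every element of $\Nrd(D^*)$ is a norm $N_{K'/K}(\mu)$ from a quadratic splitting field $K'$, over which $h$ corresponds to a quadratic form of rank $2\rank(h)\ge 6$; since $u(K')=8<2\cdot 6$, the spinor norm over $K'$ is surjective (Corollary~\ref{coro3p4temp}), and Corollary~\ref{coro2p14NEW} puts $N_{K'/K}(\mu)$ into $\Sn(h_K)$. That does yield $\Sn(h_E)=\Nrd(D_E^*)\cdot E^{*2}/E^{*2}$ for $\rank(h)\ge 3$ and reduces the proposition to Theorem~\ref{thm1p4temp}, exactly as you describe.

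The gap is in rank $2$. Your plan hinges on the equality $\Sn(h_E)=\Nrd(D_E^*)\cdot E^{*2}/E^{*2}$ also holding when $\rank(h)=2$, and you diagnose the difficulty there as merely one of choosing a clever isotropizing extension. But the equality is simply \emph{false} when $\disc(h)=d$ is nontrivial: as the paper records (citing \cite[Thm.~8.10 and Coro.~15.11]{KMRT}), in that case $\Sn(h_K)$ is $\Nrd(C^*)\cap K^*$ modulo squares, where $C$ is the even Clifford algebra of $(\rM_2(D),\sigma_h)$, a quaternion algebra over the quadratic extension $K(\sqrt d)$. This intersected norm group is in general strictly smaller than $\Nrd(D^*)$ modulo squares, and no amount of enlarging splitting fields will close that gap, because the obstruction is not isotropy of $h_L$ but the intrinsic shape of $\Sn(h)$ as a transferred norm group from $K(\sqrt d)$. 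The correct treatment is exactly parallel to the rank-$4$, nontrivial-discriminant case of Proposition~\ref{prop3p2temp}, which you cite as your model: pass to $K(\sqrt d)$ (a field of the same type as $K$ by $(\ref{para3p2NEW})$), and apply Theorem~\ref{thm1p4temp} to the quaternion algebra $C$ over $K(\sqrt d)$. Your fallback via the key exact sequence $(\ref{para2p7NEW})$ does not repair this: that sequence relates Witt groups, not spinor-norm subgroups of $K^*/K^{*2}$, and the map $\wt\rho\colon W(M,\iota)\to W(D,\sigma)$ gives no direct control over $\Sn(h_K)$ and cannot be fed into Proposition~\ref{prop3p2temp}, which moreover concerns quadratic forms over $K$, not forms over $(M,\iota)$.
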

\begin{proof}
First assume $\rank(h)=2$. Put $d=\disc(h)\in K^*/K^{*2}$. If
$d=1\in K^*/K^{*2}$, then $h$ is isotropic and $\Sn(h)=\Nrd(D^*)$
modulo squares by Merkurjev's norm principle
(Theorem$\;$\ref{thm4p2temp}). The result then follows from
Theorem$\;$\ref{thm1p4temp}. Let us assume $d=\disc(h)\in
K^*/K^{*2}$ is nontrivial. Let
$(A\,,\,\tilde{\sigma})=(\rM_2(D)\,,\,\sigma_h)$, where $\sigma_h$
denotes the adjoint involution of $h$ on $A=\rM_2(D)$. The even
Clifford algebra $C=C_0(A\,,\,\tilde{\sigma})$ of the pair
$(A\,,\,\tilde{\sigma})$ (cf. \cite[$\S$8]{KMRT}) is a quaternion
algebra over the field $K(\sqrt{d})$ and one has
\[
\Sn(h_K)=\Nrd(C^*)\cap K^*\;\pmod{K^{*2}}\,.
\](cf. \cite[p.94, Thm.$\;$8.10 and p.214, Coro.$\;$15.11]{KMRT}.)
As in the proof of Proposition$\;$\ref{prop3p2temp}, it follows from
Theorem$\;$\ref{thm1p4temp} that an element $\lambda\in K^*/K^{*2}$
is a spinor norm for $h_K$ if and only if it is a spinor norm for
$h_{K_v}$ for all $v\in\Omega_A$.

Assume next $\rank(h)\ge 3$. Let $\lambda\in K^*$ and assume
$\lambda$ is a local spinor norm for $h_{K_v}$ for every
$v\in\Omega_A$. Merkurjev's norm principle
(Theorem$\;$\ref{thm4p2temp}) implies that $\lambda\in
\Nrd(D^*_{K_v})$ for every $v\in\Omega_A$. Hence
$\lambda\in\Nrd(D^*)$ by Theorem$\;$\ref{thm1p4temp}. (Note that
$K^{*2}\subseteq \Nrd(D^*)$ since $D$ is a quaternion algebra.) Let
$K'/K$ be a  field extension such that $D_{K'}$ is split and
$\lambda=N_{K'/K}(\mu)$ for some $\mu\in (K')^*$. By
Corollary$\;$\ref{coro2p14NEW}, $N_{K'/K}(\Sn(h_{K'}))\subseteq
\Sn(h_K)$. Since $\lambda\in K^*/K^{*2}$ lies in the image of
$N_{K'/K}: (K')^*/(K')^{*2}\to K^*/K^{*2}$, to show $\lambda$ is a
spinor norm for $h_K$ it suffices to show that the map
\[
\delta'\,:\;\bfSU(h)(K')\lra (K')^*/(K')^{*2}
\]is surjective. Note that $D$ splits over $K'$
by the choice of $K'$. So we see from (\ref{para4p3temp}) that
$\mathrm{Im}(\delta')=\Sn(h_{K'})=\Sn(q_{K'})$, where $q_{K'}$ is a
quadratic form of rank $2.\rank(h)\ge 6$ over $K'$. Now the result
follows immediately from Corollary$\;$\ref{coro3p4temp}.
\end{proof}

\begin{prop}\label{prop4p5temp}Let $K$ be the function field of a $p$-adic arithmetic surface or a
local henselian surface with finite residue field of characteristic
$p$. Assume  $p\neq 2$ in the local henselian case. Let
$(D\,,\,\sigma)$ be a quaternion division algebra with an orthogonal
involution over $K$. Let $h$ be a nonsingular hermitian form of even
rank $\ge 2$ over $(D\,,\,\sigma)$. Assume that $h$ has trivial
discriminant, trivial Clifford invariant and trivial Rost invariant
$($cf. $\S\ref{sec2p2})$.

If the form $h_{K_v}$ over
$(D_{K_v}\,,\,\sigma)=(D\otimes_KK_v\,,\,\sigma)$ is hyperbolic for
every $v\in\Omega_A$, then the form $h$ over $(D\,,\,\sigma)$ is
hyperbolic.
\end{prop}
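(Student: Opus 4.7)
The plan is to prove $[h]=0$ in $W(D,\sigma)$ by first showing $\tilde\pi_2([h])=0$ in $W(L)$ via the invariant-triviality hypotheses (combined with $I^4(L)=0$), and then using the local hyperbolicity in conjunction with Lemma~\ref{lemma2p1temp} to close the descent through the Parimala--Sridharan--Suresh key exact sequence \eqref{eq2p7p2NEW}.

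As setup, since $\sigma$ is an orthogonal involution on the quaternion algebra $D$, choose a quadratic subfield $L=K(\sqrt\delta)\subseteq D$ pointwise fixed by $\sigma$ and an element $\mu\in D^*$ with $\sigma(\mu)=-\mu$ whose inner automorphism restricts on $L$ to the nontrivial element $\iota$ of $\Gal(L/K)$. Then $\tau:=\Int(\mu)\circ\sigma$ is the standard symplectic involution $\tau_0$ on $D$, and the key exact sequence \eqref{eq2p7p2NEW} specializes to
\[
W(D,\tau_0)\xrightarrow{\pi_1}W(L,\iota)\xrightarrow{\tilde\rho}W(D,\sigma)\xrightarrow{\tilde\pi_2}W(L).
\]
My first task is to show $\tilde\pi_2([h])=0$. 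Since $L\subseteq D$ is a maximal subfield it splits $D$, and Morita equivalence (cf.~(\ref{para4p3temp})) attaches to $h_L$ a quadratic form $Q$ of rank $\deg(D)\cdot\rank(h)=4n$ over $L$. The triviality of $\disc(h)$, $\mathscr{C}\ell(h)$ and $\sR(h)$ transports to triviality of the corresponding invariants of $Q$: in particular the indeterminacy $H^1(K,\mu_2)\cup(D)$ in $\sR(h)$ dies upon restriction to $L$ because $(D)_L=0$. Hence $Q\in I^3(L)$ with $e_3(Q)=0$, and since $L$ is of the same type as $K$ (cf.~(\ref{para3p2NEW})) we have $\mathrm{cd}_2(L)\le 3$ and so $I^4(L)=0$ by \cite[Prop.$\;$3.1]{AEJ86}. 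Therefore $Q=0$ in $W(L)$, so $h_L$ is hyperbolic over $(D_L,\sigma_L)$, and identifying $W(D_L,\sigma_L)\cong W(L)$ via Morita (matching $\tilde\pi_2$ up to overall scaling, irrelevant for hyperbolicity) gives $\tilde\pi_2([h])=0$. By exactness, $[h]=\tilde\rho([g])$ for some hermitian form $g$ of rank $2n$ with trivial discriminant over $(L,\iota)$.

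The final step is to use the local hypothesis to conclude $\tilde\rho([g])=0$. Since $h_{K_v}=0$ for every $v\in\Omega_A$, we have $g_{K_v}\in\ker(\tilde\rho_{K_v})=\mathrm{im}(\pi_{1,K_v})$. Under the trace-form embedding $W(L,\iota)\hookrightarrow W(K)$ with image $\langle 1,-\delta\rangle\cdot W(K)$ (cf.~(\ref{para2p4NEW})), together with the analogous embedding $W(D,\tau_0)\hookrightarrow W(K)$ with image $n_D\cdot W(K)$ (cf.~(\ref{para2p3NEW})), the subgroup $\mathrm{im}(\pi_1)$ corresponds to the ideal $n_D\cdot\langle 1,-\delta\rangle\cdot W(K)$, generated by a $3$-fold Pfister form. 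Writing $q_g\in\langle 1,-\delta\rangle W(K)$ for the trace form of $g$, the local condition becomes ``$q_g$ lies in $n_D\langle 1,-\delta\rangle\cdot W(K)$ at every $v\in\Omega_A$''; coupled with the invariant data (which, after subtracting a suitable element of $n_D\langle 1,-\delta\rangle W(K)$, place $q_g$ in $I^3(K)$ with trivial Arason invariant), Lemma~\ref{lemma2p1temp} then upgrades this to a global statement, giving $g\in\mathrm{im}(\pi_1)$ and hence $[h]=\tilde\rho([g])=0$.

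The hardest part is the last step: pinning down $\mathrm{im}(\pi_1)$ precisely as $n_D\langle 1,-\delta\rangle\cdot W(K)$ under the trace embeddings, and carefully verifying that the triviality of the invariants of $h$ propagates through $\tilde\rho$ to place the relevant deviation of $q_g$ in $I^3(K)$ with trivial $e_3$, so that Lemma~\ref{lemma2p1temp} applies unambiguously. A secondary technical point is the matching, up to overall scaling, of $\tilde\pi_2$ with the Morita-induced scalar-extension map $W(D,\sigma)\to W(L)$ used in the preceding paragraph.
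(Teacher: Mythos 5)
Your outline shares the paper's skeleton — reduce to the key exact sequence
\[
W(D,\tau)\xrightarrow{\ \pi_1\ }W(L,\iota)\xrightarrow{\ \wt\rho\ }W(D,\sigma)\xrightarrow{\ \wt\pi_2\ }W(L)\,,
\]
show $\wt\pi_2([h])=0$, lift to $W(L,\iota)$, and then descend — but both of your key steps have gaps, and the second one is left essentially unfinished.

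\textbf{The first gap: $\wt\pi_2$ is not base change to $L$.} You deduce $\wt\pi_2([h])=0$ from the hyperbolicity of $h_L$ over $(D_L,\sigma_L)$, treating $\wt\pi_2$ as the scalar-extension-plus-Morita map "up to overall scaling." But $\wt\pi_2 = \pi_2\circ\phi_\mu$ is the second projection $D = L\oplus\mu L\to L$ applied after the twist by $\mu$; it is an intrinsic construction over $K$ and is a priori unrelated to restriction of scalars along $K\hookrightarrow L$. The two maps have the same source and target and produce forms of the same rank, but equality of kernels (i.e.\ $\mathrm{im}(\wt\rho)=\ker(W(D,\sigma)\to W(D_L,\sigma_L))$) is a nontrivial claim you neither state nor prove. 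One inclusion is plausible ($\wt\rho(f)_L$ is hyperbolic because $L\otimes_K L\cong L\times L$ with $\iota$ acting as the swap), but the reverse inclusion — that every form split by $L$ comes from $W(L,\iota)$ — is exactly the content of exactness at $W(D,\sigma)$ for the \emph{base-change} map, not the projection. The paper sidesteps this issue: it applies \cite[Prop.~3.2.2]{BP1} to conclude $\wt\pi_2(h)\in I^3(L)$ directly from the triviality of $\disc(h)$ and $\mathscr{C}\ell(h)$, and then invokes the local hypothesis together with Lemma~\ref{lemma2p1temp} over $L$ to conclude $\wt\pi_2(h)=0$. The local data are used precisely here.

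\textbf{The second gap: the descent is asserted, not proved.} After lifting to $g=h_0\in W(L,\iota)$, you need $[h_0]\in\mathrm{im}(\pi_1)$. You also assert $h_0$ has trivial discriminant, which is not automatic: the paper shows explicitly that $\disc(h_0)$ may equal the class $\al$ with $(L,\al)=(D)$, and kills it by subtracting $\pi_1(\langle 1\rangle)$. You then claim the image of $\pi_1$ corresponds, under trace embeddings, to the ideal $n_D\langle 1,-\delta\rangle W(K)$ and that "coupled with the invariant data... Lemma~\ref{lemma2p1temp} then upgrades this to a global statement." That identification of $\mathrm{im}(\pi_1)$ is not obvious — the images computed in the paper's proof are specific forms such as $\pi_1(\langle 1\rangle)=\langle 1,-\al\rangle$ and $\pi_1(\langle 1,-\beta\rangle)$ with trace form $\langle 1,-\beta\rangle\otimes n_D$, not an entire principal ideal, and $\pi_1$ is not a $W(K)$-module map in any evident way — and the "coupled with the invariant data" sentence is exactly where the actual work lives. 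The paper does this step concretely: it normalizes $h_0$ twice (to kill the discriminant, then the Rost invariant $R_{\bfSU_{2n}(L,\iota)}([h_0])=(\beta)\cup(D)$ by subtracting $\pi_1(\langle 1,-\beta\rangle)$), after which $e_3(q_{h_0})=0$ forces $q_{h_0}=0$ because $\mathrm{cd}_2(K)\le3$, hence $h_0=0$. Notably, the local hypothesis is \emph{not} used in this part of the paper's argument; it was already consumed in proving $\wt\pi_2(h)=0$. So your proposal redistributes the local input to a place where it would need a genuinely different mechanism, and you do not supply that mechanism.

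In short: the structural idea (key exact sequence, lift, normalize) matches the paper, but the two pivotal claims — that $\wt\pi_2$ has the same kernel as base change to $L$, and that $\mathrm{im}(\pi_1)$ is the principal ideal $n_D\langle1,-\delta\rangle W(K)$ and Lemma~\ref{lemma2p1temp} closes the gap — are exactly the places where a proof is needed, and neither is supplied.
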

\begin{proof}
Let $L\subseteq D$ be a subfield which is a quadratic extension over
$K$ such that $\sigma(L)=L$ and $\sigma|_L=\mathrm{id}_L$. Such an $L$ exists since $\sigma$ is an orthogonal involution. Let $\mu\in D^*$
be an element such that $\sigma(\mu)=-\mu$, $\Int(\mu)(L)=L$ and
$\Int(\mu)|_L=\iota$, where $\iota$ denotes the nontrivial element
of the Galois group $\Gal(L/K)$. The involution
$\tau:=\Int(\mu)\circ \sigma$ is a symplectic involution on $D$ (and
hence coincides with the canonical involution on the quaternion
algebra $D$). The ``key exact sequence'' of Parimala-Sridharan-Suresh (cf. \eqref{eq2p7p2NEW}) yields the following commutative diagram with exact
rows
\[
\xymatrix{ W(D\,,\,\tau) \ar[d]_{} \ar[r]^{\pi_1} &  W(L\,,\,\iota)
\ar[d]_{} \ar[r]^{\wt{\rho}}
& W(D\,,\,\sigma) \ar[d]_{} \ar[r]^{\wt{\pi}_2} & W(L) \ar[d]_{} \\
\prod_{v\in\Omega_A}W(D_v\,,\,\tau)\ar[r]^{\pi_1} &
\prod_{v\in\Omega_A}W(L_v\,,\,\iota)  \ar[r]^{\wt{\rho}} &
\prod_{v\in\Omega_A} W(D_v\,,\,\sigma) \ar[r]^{\wt{\pi}_2} &
\prod_{v\in\Omega_A}W(L_v) }
\]
where for any $K$-algebra $B$ we denote $B_v=B\otimes_KK_v$ for each
$v\in\Omega_A$. (Here $L_v$ need not be a field. It can be a Galois $K_v$-algebra of the form $L_{w_1}\times L_{w_2}$, where $w_1,\,w_2$ are discrete valuations of $L$ lying over $v$. But this does not affect the construction of the key exact sequence for $D_v$. Indeed, the same choice of $\mu\in D^*\subseteq D_v^*$ satisfies the condition that $\mathrm{Int}(\mu)|_{L_v}$ is the nontrivial automorphism of the $K_v$-algebra $L_v$. It is not difficult to check that the key exact sequence for $D_v$ is still well defined.)

 The form $\wt{\pi}_2(h)\in W(L)$ has even rank,
trivial discriminant and trivial Clifford invariant by
\cite[Prop.$\;$3.2.2]{BP1}. Hence $\wt{\pi}_2(h)\in I^3(L)\subseteq
W(L)$. Let $\Omega_{A'}$ denote the set of divisorial valuations of
$L$. Then for every $w\in\Omega_{A'}$ one has $\wt{\pi}_2(h)=0$ in
$W(L_w)$. By Lemma$\;$\ref{lemma2p1temp}, $\wt{\pi}_2(h)=0$ in
$W(L)$. So by the exactness of the first row in the above diagram,
there exists a hermitian form of even rank $h_0$ over
$(L\,,\,\iota)$ such that $\wt{\rho}(h_0)=h\in W(D\,,\,\sigma)$.

Let $\al=\disc(h_0)\in K^*/N_{L/K}(L^*)$ be the discriminant of
$h_0$. One has
\[
\mathscr{C}\ell(\wt{\rho}(h_0))=(L\,,\,\al)\;\in\;\;{}_2\Br(K)/(D)
\]by \cite[Prop.$\;$3.2.3]{BP1}. Since $\mathscr{C}\ell(\wt{\rho}(h_0))=\mathscr{C}\ell(h)=0$
by assumption, one has either $(L,\,\al)=0$ or $(L\,,\,\al)=(D)$ in
$\Br(K)$. If $(L\,,\,\al)=0\in\Br(K)$ then $\al$ is a norm for the
extension $L/K$ so that $\disc(h_0)=1\in K^*/N_{L/K}(L^*)$. If
$(L\,,\,\al)=D$, writing $L=K(\sqrt{a})$ such that
$D=(a\,,\,\al)_K$, one has $\disc(\langle 1\,,\,-\al\rangle)=\al\in
K^*/N_{L/K}(L^*)$. By the construction of the map $\pi_1$, one has
$\pi_1(\langle 1\rangle)=\langle 1\,,\,-\al\rangle\in
W(L\,,\,\iota)$ (since $D=L\oplus \mu L$ with $\mu^2=\al$).
Replacing $h_0$ by $h_0-\pi_1(\langle 1\rangle)$, we may assume that
$\disc(h_0)=1\in K^*/N_{L/K}(L^*)$. Let $2n=\rank(h_0)$ and let
$\bfSU_{2n}(L\,,\,\iota)$ denote the special unitary group of the
hyperbolic form $\smatr{0}{I_n}{I_n}{0}$ over $(L\,,\,\iota)$. The
form $h_0$, having trivial discriminant, now determines a class in
$H^1(K\,,\,\bfSU_{2n}(L\,,\,\iota))$.

Let $H_{2n}$ be the hyperbolic form  $\smatr{0}{I_n}{I_n}{0}$ over
$(D\,,\,\sigma)$ and let $\bfU_{2n}(D\,,\,\sigma)$,
$\bfSU_{2n}(D\,,\,\sigma)$ and $\bfSpin_{2n}(D\,,\,\sigma)$ denote
respectively the unitary group, the special unitary group and the
spin group of the form $H_{2n}$. By (\ref{para2p12NEW}), there is a
homomorphism
\[
\rho_0\,:\;\;\bfSU_{2n}(L\,,\,\iota)\lra
\bfSpin_{2n}(D\,,\,\sigma)\,.
\]which induces a commutative diagram
\[
\xymatrix{ H^1(K\,,\,\bfSU_{2n}(L\,,\,\iota)) \ar[dr]_{\rho'}
\ar[rr]^{\rho_0} && H^1(K\,,\,\bfSpin_{2n}(D\,,\,\sigma)) \ar[dl]_{} \\
& H^1(K\,,\,\bfU_{2n}(D\,,\,\sigma)) & }
\]such that the map $\wt{\rho}\,:\;W(L\,,\,\iota)\to W(D\,,\,\sigma)$ in the ``key exact sequence'' \eqref{eq2p7p2NEW}
restricted to forms of
rank $2n$ and of trivial discriminant is compatible with the map
$\rho'$ at the level of cohomology sets.

By \cite[Prop.$\;$3.20]{BP2}, one has
\[
R_{\bfSpin_{2n}(D\,,\,\sigma)}(\rho_0([h_0]))=R_{\bfSU_{2n}(L\,,\,\iota)}([h_0])\,\in\;
H^3(K\,,\,\mathbb{Q}/\mathbb{Z}(2))\,.
\]Thus by the definition of the Rost invariant $\sR$ (cf. (\ref{para2p11NEW})),
\[
0=\sR(h)=[R_{\bfSpin_{2n}(D\,,\,\sigma)}(\rho_0([h_0]))]=
[R_{\bfSU_{2n}(L\,,\,\iota)}([h_0])]\,\in\;\frac{H^3(K\,,\,\mathbb{Q}/\mathbb{Z}(2))}{H^1(K\,,\,\mu_2)\cup
(D)}\,.
\]Therefore, there is an element $\beta\in K^*/K^{*2}=H^1(K\,,\,\mu_2)$ such that
\[
R_{\bfSU_{2n}(L\,,\,\iota)}([h_0])=(\beta)\cup (D)\,\in\;
H^3(K\,,\,\mathbb{Q}/\mathbb{Z}(2))\,.
\]A direct computation shows that the element $\wt{h}_0:=\pi_1(\langle 1\,,\,-\beta\rangle)\in W(L\,,\,\iota)$
has associated trace form $q_{\wt{h}_0}=\langle
1\,,\,-\beta\rangle\otimes n_D$, where $n_D$ denotes the norm form
of the quaternion algebra $D$. By \cite[p.438,
Example$\;$31.44]{KMRT}, the class of $\wt{h}_0$ has Rost invariant
\[
R_{\bfSU_{4}(L\,,\,\iota)}([\wt{h}_0])=e_3(q_{\wt{h}_0})=(\beta)\cup
(D)\;\in\;H^3(K\,,\,\mathbb{Q}/\mathbb{Z}(2))\,.
\]Modifying $h_0$ by $\wt{h}_0=\pi_1(\langle 1\,,\,-\beta\rangle)$, we may further assume that
the class $[h_0]\in H^1(K\,,\,\bfSU_{2n}(L\,,\,\iota))$ has trivial
Rost invariant, i.e., $e_3(q_{h_0})=0$. Since $\mathrm{cd}_2(K)\le
3$, the Arason invariant $e_3: I^3(K)\to H^3(K\,,\,\mathbb{Z}/2)$ is
injective. Hence $[q_{h_0}]=0\in W(K)$ and $[h_0]=0\in
W(L\,,\,\iota)$ by (\ref{para2p4NEW}) (cf. \cite[p.348,
Thm.$\;$10.1.1]{Schar}). It then follows immediately that
$[h]=\wt{\rho}([h_0])=0\in W(D\,,\,\sigma)$.
\end{proof}

\begin{coro}\label{coro4p6temp}Let $K$ be the function field of a $p$-adic arithmetic surface or a
local henselian surface with finite residue field of characteristic
$p$. Assume $p\neq 2$ in the local henselian case. Let $(D\,,\,\sigma)$
be a quaternion division algebra with an orthogonal involution over
$K$. Let $h_1,\,h_2$ be hermitian forms over $(D\,,\,\sigma)$ with
the same rank and discriminant such that
\[
\mathscr{C}\ell(h_1\bot (-h_2))=0\,\in\;{}_2\Br(K)/(D)\,
\]and
\[
\sR(h_1\bot (-h_2))=0\,\in\;
H^3(K\,,\,\mathbb{Q}/\mathbb{Z}(2))/H^1(K\,,\,\mu_2)\cup (D)\,.
\]Then $h_1\cong h_2$ if and only if $(h_1)_{K_v}\cong (h_2)_{K_v}$ for every $v\in\Omega_A$.
\end{coro}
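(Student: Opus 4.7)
The plan is straightforward: deduce the corollary as a formal consequence of Proposition \ref{prop4p5temp}, which has been set up for exactly this purpose.

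The ``only if'' direction is immediate, since an isomorphism over $K$ base-changes to an isomorphism over every completion $K_v$. For the nontrivial direction, I would form $h:=h_1\bot(-h_2)$. This form has rank $2\rank(h_1)$, which is automatically even; moreover, the mere fact that the hypothesis $\mathscr{C}\ell(h)=0\in{}_2\Br(K)/(D)$ makes sense already forces $h$ to have trivial discriminant (equivalently, the equality $\disc(h_1)=\disc(h_2)$ together with the usual sign computation for $\disc(-h_2)$ yields $\disc(h)=1$ since $\rank(h_1)=\rank(h_2)$). The assumption on the Rost invariant $\sR(h)=0$ in $H^3(K,\mathbb{Q}/\mathbb{Z}(2))/H^1(K,\mu_2)\cup(D)$ is built into the statement, so $h$ satisfies every hypothesis needed to apply Proposition \ref{prop4p5temp}.

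Next, I would translate the local-isomorphism hypothesis into a hyperbolicity statement. For each $v\in\Omega_A$, the assumption $(h_1)_{K_v}\cong (h_2)_{K_v}$ gives
\[
h_{K_v}=(h_1)_{K_v}\bot(-h_2)_{K_v}\cong (h_1)_{K_v}\bot\bigl(-(h_1)_{K_v}\bigr),
\]
which is hyperbolic over $(D_{K_v},\sigma)$. Proposition \ref{prop4p5temp} then applies and yields that $h=h_1\bot(-h_2)$ is hyperbolic over $(D,\sigma)$.

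Finally, by Witt cancellation for hermitian forms over the central simple algebra with involution $(D,\sigma)$ (a standard fact, see \cite[Ch.$\;$7]{Schar}), hyperbolicity of $h_1\bot(-h_2)$ is equivalent to $h_1\cong h_2$ over $(D,\sigma)$, which is the desired conclusion. There is no genuine obstacle here: the conceptual work has already been done inside Proposition \ref{prop4p5temp} (where the key exact sequence of Parimala--Sridharan--Suresh, together with Lemma \ref{lemma2p1temp} and the injectivity of $e_3$ on $I^3(K)$, do the lifting of $h$ to a form over $(L,\iota)$ of trivial invariants); the only thing to verify in the corollary itself is that the invariants of $h_1\bot(-h_2)$ do behave additively as expected, which is routine.
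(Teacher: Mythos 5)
Your proposal is correct and follows exactly the paper's own proof, which is simply to apply Proposition \ref{prop4p5temp} to $h=h_1\bot(-h_2)$ and conclude by Witt cancellation. Your additional verification that $h$ has even rank and trivial discriminant, and that local isomorphism yields local hyperbolicity, is the routine bookkeeping the paper leaves implicit.
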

\begin{proof}
Apply Proposition$\;$\ref{prop4p5temp} to the form $h=h_1\bot
(-h_2)$ and use Witt's cancellation theorem.
\end{proof}

\begin{thm}\label{thm4p7temp}Let $K$ be the function field of a $p$-adic arithmetic surface or a
local henselian surface with finite residue field of characteristic
$p$. Assume $p\neq 2$ in the local henselian case. Let
$(D,\,\sigma)$ be a quaternion division algebra with an orthogonal
involution over $K$, $h$ a nonsingular hermitian form of rank $\ge
2$ over $(D\,,\,\sigma)$ and $G=\bfSpin(h)$.

Then the natural map
\[
H^1(K\,,\,G)\lra \prod_{v\in\Omega_A}H^1(K_v\,,\,G)
\]has a trivial kernel.
\end{thm}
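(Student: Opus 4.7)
The plan is to mimic the argument of Theorem~\ref{thm3p5temp}(i), with hermitian forms over $(D,\,\sigma)$ playing the role of quadratic forms. I would first consider the exact sequence of pointed sets arising from $1\to\mu_2\to\bfSpin(h)\to\bfSU(h)\to 1$:
\[
\bfSU(h)(K)\overset{\delta}{\lra}K^*/K^{*2}\overset{\psi}{\lra} H^1(K,\,\bfSpin(h))\overset{\eta}{\lra}H^1(K,\,\bfSU(h))\,.
\]
By (\ref{para2p10NEW}), the image of $\eta$ is in bijection with isomorphism classes of hermitian forms $h'$ over $(D,\,\sigma)$ that have the same rank and discriminant as $h$ and satisfy $\mathscr{C}\ell_h(h')=0$ in ${}_2\Br(K)/(D)$. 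Given a locally trivial $\xi\in H^1(K,\,G)$, let $h'$ represent $\eta(\xi)$.

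Local triviality of $\xi$ gives $h'_{K_v}\cong h_{K_v}$ for every $v\in\Omega_A$, so $h\bot (-h')$ is hyperbolic over every $K_v$. This form has even rank, trivial discriminant and trivial Clifford invariant (the latter coinciding with $\mathscr{C}\ell_h(h')=0$). Next, the Rost invariant $R_G(\xi)\in H^3(K,\,\mathbb{Q}/\mathbb{Z}(2))$ is locally trivial, hence vanishes by Theorem~\ref{thm1p3temp} (Kato-Saito). Via the twisting compatibility of (\ref{para2p11NEW})--(\ref{para2p12NEW}) together with \cite[Prop.$\;$3.20]{BP2}, the invariant $\sR(h\bot (-h'))$ coincides with the image of $R_G(\xi)$ in the quotient $H^3(K,\,\mathbb{Q}/\mathbb{Z}(2))/(H^1(K,\,\mu_2)\cup (D))$, and therefore also vanishes.

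Corollary~\ref{coro4p6temp} now applies and yields $h\cong h'$, whence $\eta(\xi)=0\in H^1(K,\,\bfSU(h))$. By exactness of the sequence above, one may write $\xi=\psi(\al)$ for some $\al\in K^*/(K^{*2}\cdot\Sn(h_K))$. Since this construction is compatible with localization at each $v\in\Omega_A$, the local triviality of $\xi$ forces the image of $\al$ in $K_v^*/(K_v^{*2}\cdot\Sn(h_{K_v}))$ to vanish for every $v\in\Omega_A$. Proposition~\ref{prop5p1NEW} then gives $\al=0$, and hence $\xi=0$.

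The main obstacle I anticipate is the Rost-invariant identification used in the second paragraph, namely the equality (modulo $H^1(K,\,\mu_2)\cup (D)$) of $R_G(\xi)$ and $\sR(h\bot (-h'))$. This requires careful tracking of Rost invariants through the standard twisting that relates $\bfSpin(h)$ to the hyperbolic spin group $\bfSpin_{2n}(D,\,\sigma)$ with $2n=2\rank(h)$, in the spirit of the comparison homomorphism $\rho_0$ of (\ref{para2p12NEW}); the ambiguity by $(H^1(K,\,\mu_2))\cup (D)$ in the definition of $\sR$ is precisely what makes this comparison well-posed.
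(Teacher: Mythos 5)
Your approach matches the paper's proof: identify the image $[h']$ of $\xi$, kill the Rost invariant by the Kato--Saito local-global principle, invoke Corollary~\ref{coro4p6temp} to get $h\cong h'$, then descend to the spinor-norm quotient and conclude with Proposition~\ref{prop5p1NEW}. The step you flag as the main worry — identifying $[R_G(\xi)]$ with $\sR(h\bot(-h'))$ modulo $H^1(K,\mu_2)\cup(D)$ — is indeed nontrivial; the paper handles it by citing \cite[Lemma~5.1]{PaPr} rather than the material around $\rho_0$ in (\ref{para2p12NEW}) and \cite[Prop.~3.20]{BP2}, which is about a different comparison map. But this step is not the gap.

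The genuine gap is the inference \emph{``$h\cong h'$, whence $\eta(\xi)=0\in H^1(K,\bfSU(h))$.''} What $h\cong h'$ gives you is that the image of $\eta(\xi)$ under the further map $H^1(K,\bfSU(h))\to H^1(K,\bfU(h))$ is the base point. Since $\bfU(h)/\bfSU(h)\cong\mu_2$, the exact sequence
\[
\bfU(h)(K)\lra\mu_2(K)\lra H^1(K,\bfSU(h))\lra H^1(K,\bfU(h))
\]
shows that the kernel of $H^1(K,\bfSU(h))\to H^1(K,\bfU(h))$ is the image of $\mu_2(K)=\{\pm1\}$, which need not be trivial a priori. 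So concluding $\eta(\xi)=0$ from the vanishing of its image in $H^1(K,\bfU(h))$ requires an argument — one needs to know either that $\bfU(h)(K)\to\mu_2(K)$ is surjective, or more precisely that the class in question in $H^1(K,\bfSU(h))$, which already comes from $H^1(K,\bfSpin(h))$, must die. The paper supplies exactly this missing step by invoking \cite[Lemma~7.11]{BP2}. Without it, the transition from ``$h\cong h'$'' to ``$\xi=\psi(\alpha)$'' is not justified.
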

\begin{proof}
Let $\xi\in H^1(K\,,\,\bfSpin(h))$ be a class which is
trivial in $H^1(K_v\,,\,\bfSpin(h))$ for all $v\in \Omega_A$. The
image of $\xi$ under the composite map
\[
H^1(K\,,\,G)=H^1(K\,,\,\bfSpin(h))\lra H^1(K\,,\,\bfSU(h))\lra
H^1(K\,,\,\bfU(h))
\]is the class of a hermitian form $h'$ which has the same rank and discriminant as $h$ such that
\[
\mathscr{C}\ell(h\bot (-h'))=0\,\in\;{}_2\Br(K)/(D)\,.
\]Let $n=\rank(h)$. Let $\bfSpin_{2n}(D\,,\,\sigma)$ and $\bfU_{2n}(D\,,\,\sigma)$ denote respectively the
spin group and the unitary group of the hyperbolic form
$\smatr{0}{I_n}{I_n}{0}$ over $(D\,,\,\sigma)$. Then the class
$[h\bot (-h')]\in H^1(K\,,\,\bfU_{2n}(D\,,\,\sigma))$ lifts to an
element $\xi'\in H^1(K\,,\,\bfSpin_{2n}(D\,,\,\sigma))$. By
\cite[Lemma$\;$5.1]{PaPr}, we have
\begin{equation}\label{eq4p7p1temp}
[R_G(\xi)]=\sR(h\bot(-h'))=[R_{\bfSpin_{2n}(D\,,\,\sigma)}(\xi')]\,\in\;\frac{H^3(K\,,\,\mathbb{Q}/\mathbb{Z}(2))}{H^1(K\,,\,\mu_2)\cup
(D)}\,.
\end{equation}Since $\xi$ is locally trivial, the commutative diagram
\[
\begin{CD}
H^1(K\,,\,G) @>R_G>> H^3(K\,,\,\mathbb{Q}/\mathbb{Z}(2)) \\
@VVV @VVV \\
\prod_{v\in\Omega_A}H^1(K_v\,,\,G) @>R_G>>
\prod_{v\in\Omega_A}H^3(K_v\,,\,\mathbb{Q}/\mathbb{Z}(2))
\end{CD}
\]shows that the Rost invariant $R_G(\xi)$ is locally trivial. By Theorem$\;$\ref{thm1p3temp}, noticing that the Rost invariant
$R_G$ takes values in the subgroup $H^3(K\,,\,\mu_4^{\otimes 2})$,
we  get $R_G(\xi)=0\in H^3(K\,,\,\mathbb{Q}/\mathbb{Z}(2))$. Thus, by
\eqref{eq4p7p1temp},
\[
\sR(h\bot
(h'))=0\,\in\;\frac{H^3(K\,,\mathbb{Q}/\mathbb{Z}(2))}{H^1(K\,,\,\mu_2)\cup (D)}\,.
\]Now Corollary$\;$\ref{coro4p6temp} implies that $h\cong h'$ and hence the image of $\xi\in H^1(K\,,\,G)$ in $H^1(K,\,\bfU(h))$ is trivial.
By \cite[Lemma$\;$7.11]{BP2}, the canonical image of $\xi$ in
$H^1(K,\,\bfSU(h))$ is also trivial.

Now consider the following commutative diagram with exact rows
\[
\begin{CD}
1 @>>> \frac{K^*/K^{*2}}{\Sn(h_K)} @>\varphi>> H^1(K\,,\,G) @>>> H^1(K\,,\,\bfSU(h))\\
&& @VVV @VVV @VVV \\
1 @>>> \prod_{v\in\Omega_A}\frac{K_v^*/K_v^{*2}}{\Sn(h_{K_v})} @>>>
\prod_{v\in\Omega_A}H^1(K_v\,,\,G) @>>> \prod_{v\in\Omega_A}
H^1(K_v\,,\,\bfSU(h))
\end{CD}
\]which is induced by the natural exact sequence of algebraic groups
\[
1\lra \mu_2\lra G=\bfSpin(h)\lra \bfSU(h)\lra 1\,.
\]The exactness of the first row yields $\xi=\varphi(\theta)$ for some $\theta\in \frac{K^*/K^{*2}}{\Sn(h_K)}$. The commutative
diagram then shows that $\theta$ is locally trivial since $\xi$ is
locally trivial. From Propositoin$\;$\ref{prop5p1NEW} it follows
that $\theta=1\in \frac{K^*/K^{*2}}{\Sn(h_K)}$ and hence
$\xi=\varphi(\theta)$ is trivial in $H^1(K\,,\,G)$. This completes the
proof.
\end{proof}

\section{Groups of type ${}^2A_n^*$}\label{sec6}

\subsection{Case of odd index}

\begin{prop}\label{prop5p1temp}Let $K$ be the function field of a $p$-adic arithmetic surface or a
local henselian surface with finite residue field of characteristic
$p$. Assume $p\neq 2$ in the local henselian case. Let $L/K$ be a
quadratic field extension, $(D\,,\,\tau)$ a central division algebra
of odd degree over $L$ with an $L/K$-involution $\tau\,($i.e., a
unitary involution $\tau$ such that $L^{\tau}=K\,)$. Let $h_1,\,h_2$
be nonsingular hermitian forms over $(D\,,\,\tau)$ which have the
same rank and discriminant.

If the forms $(h_1)_{K_v}\cong (h_2)_{K_v}$ over
$(D_{K_v}\,,\,\tau)=(D\otimes_LL\otimes_KK_v\,,\,\tau)$ are
isomorphic
 for all $v\in\Omega_A$, then the forms $h_1\,,\,h_2$ over $(D\,,\,\tau)$
are isomorphic.
\end{prop}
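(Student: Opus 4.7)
Set $h := h_1 \perp (-h_2)$, a nonsingular hermitian form over $(D,\,\tau)$ of even rank, say $2n$, and trivial discriminant. By Witt cancellation it suffices to show $h$ is hyperbolic, and the hypothesis that $(h_1)_{K_v}\cong(h_2)_{K_v}$ for every $v\in\Omega_A$ is equivalent to $h_{K_v}$ being hyperbolic for each such $v$. The plan is to detect hyperbolicity via the Rost invariant together with the Hasse principle for $H^3$, and then convert the conclusion into an Arason-invariant statement about a quadratic form by exploiting the odd degree of $D$.

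Since $h$ has trivial discriminant, its class $[h]\in H^1(K,\,\bfU_{2n}(D,\,\tau))$ lifts to some $\xi\in H^1(K,\,\bfSU_{2n}(D,\,\tau))$, and by paragraph$\,$(\ref{para3p1TEMP}) the Rost invariant $\mathscr{R}(h)=[R_{\bfSU_{2n}(D,\,\tau)}(\xi)]$ is well defined in $H^3(K,\,\mathbb{Q}/\mathbb{Z}(2))/\mathrm{Cores}_{L/K}((L^{*1})\cup(D))$. Local hyperbolicity forces $\mathscr{R}(h)_v=0$ for every $v\in\Omega_A$. The Rost invariant $R_{\bfSU_{2n}(D,\,\tau)}(\xi)$ lies in $H^3(K,\,\mu_m^{\otimes 2})$ for some $m$ dividing $2\deg(D)$, with $p\nmid m$ in the henselian case since $p\notin S(G)$; since $[D]$ has odd order in $\Br(L)$, the corestriction ambiguity lies in the odd-primary part of $H^3$, while the $2$-primary component of the Rost invariant is independent of the chosen lift. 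Theorem$\,$\ref{thm1p3temp}, applied separately to the $2$-primary and odd-primary parts, then yields (after adjusting $\xi$ by a suitable element of $L^{*1}$) that $R_{\bfSU_{2n}(D,\,\tau)}(\xi)=0$ in $H^3(K,\,\mathbb{Q}/\mathbb{Z}(2))$.

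To pass from vanishing of the Rost invariant to hyperbolicity of $h$, I would exploit that $\deg D$ is odd. Choose a $\tau$-stable maximal subfield $E\subset D$ containing $L$ (so $[E:L]=\deg D$ is odd and $D_E$ is split), and set $M:=E^{\tau}$; then $M/K$ has odd degree $\deg D$, $M$ is linearly disjoint from $L$ over $K$, and $E=LM$. By paragraph$\,$(\ref{para3p2NEW}), $M$ is again of the same type as $K$, so $\mathrm{cd}_2(M)\le 3$. Morita equivalence (paragraph$\,$(\ref{para2p6NEW})) converts $h_M$ over $(D_M,\,\tau_M)$ into a hermitian form $h'$ over $(E,\,\tau|_E)$ of even rank and trivial discriminant, and passing to the trace form (paragraph$\,$(\ref{para2p4NEW})) yields a nonsingular quadratic form $q$ over $M$ lying in the principal ideal of $W(M)$ generated by $\langle 1,\,-\delta\rangle$, where $E=M(\sqrt{\delta})$. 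Triviality of $\disc(h')$ places $q\in I^2(M)$; the compatibility of $\mathscr{R}$ with the Arason invariant under Morita and trace (cf.\ \cite[Ex.~31.44]{KMRT} and the argument of paragraph$\,$(\ref{para2p12NEW}) in its unitary analogue), combined with $\mathscr{R}(h_M)=0$, forces $q\in I^3(M)$ with $e_3(q)=0$. Since $I^4(M)=0$ by $\mathrm{cd}_2(M)\le 3$, $e_3$ is injective on $I^3(M)$, so $q$ is hyperbolic and hence $h_M$ is hyperbolic over $(D_M,\,\tau_M)$. A Springer-type odd-degree descent for hermitian Witt groups of $(D,\,\tau)$ (Bayer-Fluckiger, Lenstra) then yields that $h$ itself is hyperbolic over $(D,\,\tau)$.

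The principal obstacle I foresee is the final chain of identifications in the third paragraph: verifying that under Morita and trace the Rost invariant of $h_M$ really becomes the Arason invariant of $q$, and invoking the Springer-type odd-degree descent for $W(D,\,\tau)$. Both ingredients are available in the literature but must be applied carefully in the mixed orthogonal/unitary setting. In comparison, the Hasse principle step in the second paragraph is relatively routine once the odd/even decomposition of the corestriction ambiguity is observed.
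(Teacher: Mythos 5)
Your overall skeleton is the right one and matches the paper: pass to an odd-degree extension $M/K$ over which $D$ splits, convert $h_M$ to a quadratic form $q_M$ over $M$ via Morita equivalence and the trace form, prove $q_M$ hyperbolic, and descend using Bayer-Fluckiger--Lenstra (odd-degree injectivity of the hermitian Witt group). However, the paper never touches the Rost invariant in this proof. It observes that $[q_M]\in I^3(M)$ follows \emph{automatically} from $h$ having even rank and trivial discriminant (the trace of a rank-$r$ hermitian form $\langle a_1,\dots,a_r\rangle$ over $(LM,\iota)$ has Clifford invariant $\bigl(\prod a_i\bigr)\cup(\delta)$ up to a sign, and triviality of the discriminant makes $\prod a_i$ a norm from $LM$, killing this cup product), and then applies the Hasse principle for $I^3$ over $M$ (Lemma~\ref{lemma2p1temp}) directly to the locally hyperbolic form $q_M$. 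There is no need to first establish vanishing of any Rost invariant over $K$, nor to invoke $\mathrm{cd}_2(M)\le 3$ and injectivity of $e_3$: the local-global statement does the whole job at once.

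Your detour through $\mathscr{R}(h)$ over $K$ introduces two genuine problems. First, the Rost invariant of $h$ is only well-defined in the quotient $H^3(K,\mathbb{Q}/\mathbb{Z}(2))/\mathrm{Cores}_{L/K}\bigl((L^{*1})\cup(D)\bigr)$; you correctly observe that the $2$-primary part is unambiguous and that the ambiguity is odd-torsion, but Theorem~\ref{thm1p3temp} gives injectivity of $H^3(K,\mathbb{Q}/\mathbb{Z}(2))\to\prod H^3(K_v,\mathbb{Q}/\mathbb{Z}(2))$, not of the map on quotients by the locally varying corestriction subgroups, so ``adjusting $\xi$ by a suitable element of $L^{*1}$'' to kill the odd part is precisely the assertion that needs proof rather than its justification. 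Second, in your third paragraph you infer $q\in I^3(M)$ from $\mathscr{R}(h_M)=0$; this is a degree mismatch, since $I^3$-membership is a Clifford-invariant ($e_2$) condition in $H^2$, while $\mathscr{R}$ takes values in $H^3$. The inclusion $q\in I^3(M)$ holds, but because of the rank and discriminant bookkeeping sketched above, not as a consequence of vanishing of the degree-three invariant. Once you know $q_M\in I^3(M)$ and $q_M$ is locally hyperbolic, Lemma~\ref{lemma2p1temp} alone concludes, rendering the entire second paragraph superfluous.
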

\begin{proof}
Let $M/K$ be a field extension of odd degree such that
$D_M=D\otimes_L(L\otimes_KM)$ is split over the field
$LM=L\otimes_KM$. (Such an extension $M/K$ exists by
\cite[Lemma$\;$3.3.1]{BP1}.) The base extension $\tau_M$ of $\tau$
is a unitary involution on the central simple $(LM)$-algebra $D_M$
such that $(LM)^{\tau_M}=M$. Let $\iota$ denote the nontrivial
element of the Galois group $\Gal(L/K)$ and regard
$\iota_M\in\Gal(LM/M)$ as a unitary involution on $LM$. There is a
nonsingular hermitian form $(V\,,\,f)$ over $(LM\,,\,\iota_M)$ such
that $(D_M\,,\,\tau_M)\cong (\End_{LM}(V)\,,\,\iota_f)$, where
$\iota_f$ denotes the adjoint involution on $\End_{LM}(V)$ with
respect to $f$ (cf. \cite[p.43, Thm.$\;$4.2 (2)]{KMRT}). We have a
Morita equivalence between the category of hermitian forms over
$(D_M\,,\,\tau_M)$ and the category of hermitian forms over
$(LM\,,\,\iota_M)$ (cf. (\ref{para2p6NEW})), which induces an
isomorphism of Witt groups
\[
\phi_f\,:\;\;W(D_M\,,\,\tau_M)\simto W(LM\,,\,\iota_M)\,.
\]

Let $h=h_1\bot (-h_2)$ and let $h_M$ be its base extension over
$(D_M\,,\,\tau_M)$. Via the Morita equivalence mentioned above,
$h_M$ corresponds to a hermitian form $\wt{h}_M$ over
$(LM\,,\,\iota_M)$. Let $q_M:=q_{\wt{h}_M}$ be the trace form of
$\wt{h}_M$ (which is a quadratic form over the field $M$). Since $h$
has even rank and trivial discriminant, the class $[q_M]\in W(M)$ of
the quadratic form $q_M$ lies in $I^3(M)$. The hypothesis on the
local triviality (with respect to $\Omega_A$) of $[h]=[h_1\bot
(-h_2)]$ implies that $[q_M]\in I^3(M)$ is locally trivial (with
respect to the set of discrete valuations of $M$ defined in the same
way as $\Omega_A$). By Lemma$\;$\ref{lemma2p1temp}, we have
$[q_M]=0$ and hence $[\wt{h}_M]=0$ in $W(LM\,,\,\iota_M)$. Since
$W(D_M\,,\,\tau_M)\cong W(LM\,,\,M)$, $[h_M]=0$ in
$W(D_M\,,\,\tau_M)$. Since $M/K$ is an odd degree extension, the
natural map $W(D\,,\,\tau)\to W(D_M\,,\,\tau_M)$ is injective by a
theorem of Bayer-Fluckiger and Lenstra (cf. \cite[p.80,
Coro.$\;$6.18]{KMRT}). So we get $[h]=0$ in $W(D\,,\,\tau)$, thus
proving the proposition.
\end{proof}

\begin{lemma}\label{lemma5p2temp}Let $K$ be the function field of a $p$-adic arithmetic surface or a
local henselian surface with finite residue field of characteristic
$p$. Let $L/K$ be a separable quadratic field extension
and $(D\,,\,\tau)$ a central division $L$-algebra of square-free index
 $\ind(D)$ with a unitary involution $\tau$ such that $L^{\tau}=K$.
Assume $p\nmid \ind(D)$ in the local henselian case.

Then for any nonsingular
hermitian form $h$ over $(D\,,\,\tau)$, the natural map
\[
\frac{(R^1_{L/K}\mathbb{G}_m)(K)}{\Nrd(\bfU(h)(K))}\lra
\prod_{v\in\Omega_A}\frac{(R^1_{L/K}\mathbb{G}_m)(K_v)}{\Nrd(\bfU(h)(K_v))}
\]is injective.
\end{lemma}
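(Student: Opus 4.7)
The plan is to reduce the assertion to the Hasse principle for reduced norms of $D$, namely Theorem~\ref{thm1p4temp} applied to $\bfSL_1(D)$ over $L$. Fix $\lambda\in L^{*1}=(R^1_{L/K}\mathbb{G}_m)(K)$ whose image in each $(R^1_{L/K}\mathbb{G}_m)(K_v)$ lies in $\Nrd(\bfU(h)(K_v))$; the goal is to show $\lambda\in\Nrd(\bfU(h)(K))$.

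The first step is the elementary inclusion $\Nrd(\bfU(h)(K))\subseteq\Nrd(D^*)\cap L^{*1}$. Indeed, $\bfU(h)$ sits inside $\GL_1(\End_D V)$, whose reduced-norm image coincides with $\Nrd(D^*)$ because $\End_D(V)\cong\rM_n(D)$; and the unitary relation $\sigma_h(g)g=1$, combined with $\Nrd\circ\sigma_h=\tau\circ\Nrd$, forces $\tau(\Nrd(g))\Nrd(g)=1$, so $\Nrd(g)\in L^{*1}$. The same inclusion holds over each $K_v$. Consequently $\lambda$ is locally a reduced norm of $D$; since divisorial valuations of $L$ are precisely those lying over divisorial valuations of $K$ (see~\ref{para3p2NEW}), Theorem~\ref{thm1p4temp} applied to $\bfSL_1(D)$ over $L$ gives $\lambda\in\Nrd(D^*)$.

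The second and more substantial step is to upgrade $\lambda\in\Nrd(D^*)\cap L^{*1}$ to $\lambda\in\Nrd(\bfU(h)(K))$. For this I would invoke Merkurjev's norm principle for reductive groups (\cite{Mer96}; compare Corollary~\ref{coro2p14NEW}) applied to the reduced-norm homomorphism $\Nrd\colon\bfU(h)\to R^1_{L/K}\mathbb{G}_m$: this characterizes $\Nrd(\bfU(h)(K))$ as the subgroup of $L^{*1}$ generated by images of norms from finite field extensions $M/K$ that split $D_{LM}$. Over such an $M$, the unitary group $\bfU(h)_M$ becomes Morita-equivalent (cf.~\ref{para2p6NEW}) to a classical unitary group over $LM/M$ whose reduced norm surjects onto $(LM)^{*1}$. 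Writing $\lambda=\beta/\tau(\beta)$ by Hilbert~90 for $L/K$, the task reduces to showing $\beta$ can be chosen inside $\Nrd(D^*)$, which is again a matter of Theorem~\ref{thm1p4temp}. The main obstacle is this step: cleanly identifying $\Nrd(\bfU(h)(K))$ as a subgroup of $L^{*1}$ controlled by $\Nrd(D^*)$, and verifying the Hilbert~90 descent so that the preimage $\beta$ lies in $\Nrd(D^*)$. The square-free index hypothesis on $D$ (together with $p\nmid\ind(D)$ in the henselian case) is used precisely to make Theorem~\ref{thm1p4temp} available at each stage of this reduction.
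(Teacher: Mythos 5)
Your first step is correct and is essentially what the paper does for the odd index case: the inclusion $\Nrd(\bfU(h)(K))\subseteq\Nrd(D^*)\cap L^{*1}$ is elementary, and the equality
\[
\Nrd(\bfU(h)(K))=\Nrd(D^*)\cap (R^1_{L/K}\mathbb{G}_m)(K)
\]
(from \cite[p.202, Exercise$\;$III.12(b)]{KMRT}) is what reduces the odd-index case to Theorem~\ref{thm1p4temp} applied to $\bfSL_1(D)$ over $L$. For odd index, once you know $\lambda\in\Nrd(D^*)\cap L^{*1}$, you are done.

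The genuine gap is in your second step, and it is not just a matter of technique: when $\ind(D)$ is even, that equality fails. For $D$ a quaternion algebra over $L$, the description is instead $\Nrd(\bfU(h)(K))=\set{z\tau(z)^{-1}\,|\,z\in\Nrd(D^*)}$ (\cite[Exercise$\;$III.12(a)]{KMRT}), which is in general a \emph{proper} subgroup of $\Nrd(D^*)\cap L^{*1}$: by Hilbert~90 an element $\mu\in\Nrd(D^*)\cap L^{*1}$ is $w\tau(w)^{-1}$ for some $w\in L^*$ unique modulo $K^*$, and there is no reason $w$ can be adjusted by $K^*$ to land in $\Nrd(D^*)$. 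So knowing $\lambda\in\Nrd(D^*)$ does not put $\lambda$ into $\Nrd(\bfU(h)(K))$, and the Hilbert~90 descent you sketch cannot be completed this way. Moreover, the cited norm principle of Merkurjev (Theorem~\ref{thm4p2temp}) is stated for spinor norms of orthogonal groups, not for $\Nrd\colon\bfU(h)\to R^1_{L/K}\mathbb{G}_m$, so the ``characterization'' you invoke would itself need to be established.

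The paper handles the quaternion case by a completely different argument that your proposal does not contain: after reducing to the hyperbolic form of rank $2$, it considers the cohomology sequence of $1\to\bfSU_2(D,\tau)\to\bfU_2(D,\tau)\to R^1_{L/K}\mathbb{G}_m\to 1$, identifies $\bfSU_2(D,\tau)$ with $\bfSpin(q)$ for the rank-$6$ form $q=\langle 1,-d\rangle\otimes n_{D_0}$ (\cite[p.229]{KMRT}, where $D=D_0\otimes_KL$), and applies the Hasse principle for spinor groups of quadratic forms (Theorem~\ref{thm3p5temp}). The remaining case $\ind(D)$ even and not $2$ is then obtained by splitting the odd part of $D$ over an odd-degree extension $K'/K$, using Morita theory and the norm-principle inclusion $N_{LK'/K'}(\Nrd(\bfU(h')(K')))\subseteq\Nrd(\bfU(h)(K))$ from \cite[Prop.$\;$10.2]{PaPr}, and combining the odd and quaternion cases. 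None of these ingredients appears in your sketch, so the proposal as written cannot reach the even-index case.
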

\begin{proof}
First assume $\ind(D)=2$ so that $D$ is a quaternion division
algebra over $L$. By \cite[p.202, Exercise$\;$III.12 (a)]{KMRT}, we
have
\[
\Nrd(\bfU(h)(K))=\set{z\tau(z)^{-1}\,|\,z\in\Nrd(D^*)}=\Nrd(\bfU_2(D\,,\,\tau)(K))\,,
\]where $\bfU_2(D\,,\,\tau)$ denotes the unitary group of the rank 2 hyperbolic form $\smatr{0}{1}{1}{0}$ over
$(D\,,\,\tau)$. So we may assume that $h=\smatr{0}{1}{1}{0}$. The
exact sequence of algebraic groups
\[
1\lra \bfSU_2(D\,,\,\tau)\lra
\bfU_2(D\,,\,\tau)\overset{\Nrd}{\lra}(R^1_{L/K}\mathbb{G}_m)\lra 1
\]gives rise to the following commutative diagram with exact rows
\[
\begin{CD}
1 @>>> \frac{(R^1_{L/K}\mathbb{G}_m)(K)}{\Nrd(\bfU(h)(K))}
 @>\varphi>> H^1(K\,,\,\bfSU_2(D\,,\,\tau))\\
&& @VVV @VVV  \\
1 @>>>
\prod_{v\in\Omega_A}\frac{(R^1_{L/K}\mathbb{G}_m)(K_v)}{\Nrd(\bfU(h)(K_v))}
 @>>> \prod_{v\in\Omega_A}H^1(K_v\,,\,\bfSU_2(D\,,\,\tau))
\end{CD}
\]We need only to show that the vertical map on the right in the above diagram is injective.

By \cite[p.26, Prop.$\;$2.22]{KMRT}, there is a unique quaternion
$K$-algebra $D_0$ contained in $D$ such that $D=D_0\otimes_KL$ and
$\tau=\tau_0\otimes\iota$, where $\tau_0$ is the canonical
involution on $D_0$ and $\iota$ is the nontrivial element in the
Galois group $\Gal(L/M)$. Write $L=K(\sqrt{d})$ and let $n_{D_0}$ be
the norm form of the quaternion $K$-algebra $D_0$. Then by
\cite[p.229]{KMRT}, we have $\bfSU_2(D\,,\,\tau)=\bfSpin(q)$, where
$q=\langle 1\,,\,-d\rangle\otimes n_{D_0}$. Now the result follows
from Theorem$\;$\ref{thm3p5temp}.

Assume next $\ind(D)$ is odd (and square-free). By \cite[p.202,
Exercise$\;$III.12 (b)]{KMRT},
\[
\Nrd(\bfU(h)(K))=\Nrd(D^*)\cap (R^1_{L/K}\mathbb{G}_m)(K)\,.
\]Let $\lambda\in (R^1_{L/K}\mathbb{G}_m)(K)=\set{z\in L^*\,|\, N_{L/K}(z)=1}$ be such that for every $v\in\Omega_A$,
$\lambda\in \Nrd(\bfU(h)(K_v))=\Nrd((D\otimes_KK_v)^*)\cap
(R^1_{L/K}\mathbb{G}_m)(K_v)$. Since $\ind(D)$ is square-free, it follows
from Theorem$\;$\ref{thm1p4temp} that $\lambda\in\Nrd(D^*)$. Hence
\[
\lambda\in\Nrd(\bfU(h)(K))=\Nrd(D^*)\cap (R^1_{L/K}\mathbb{G}_m)(K)\,.\]

Now assume $\mathrm{ind}(D)$ is even such that $\mathrm{ind}(D)/2$ is odd and square-free. In this case we have $D=H\otimes_LD'$ for some quaternion division algebra $H$
over $L$ and some central division algebra $D'$ of odd index over $L$. By \cite[Lemma$\;$3.3.1]{BP1}, there is an odd degree separable extension $K'/K$ such that $D'\otimes_KK'=D'\otimes_LLK'$ is split. By Morita theory, there is a unitary $LK'/K'$-involution $\sigma$ on $H\otimes_LLK'$ and a hermitian form $f$
over $(H\otimes_LLK'\,,\,\sigma)$ such that the involution $\tau$ on $D\otimes_LLK'$ is adjoint to $f$, and moreover, the form $h_{K'}$ over $(D\otimes_LLK'\,,\,\tau)$ corresponds to a hermitian form $h'$ over $(H\otimes_LLK'\,,\,\sigma)$. Consider the commutative diagram
\[
\begin{CD}
\frac{R^1_{L/K}\mathbb{G}_m(K)}{\Nrd(\bfU(h)(K))} @>\eta>> \prod_{v\in\Omega_A}\frac{R^1_{L/K}\mathbb{G}_m(K_v)}{\Nrd(\bfU(h)(K_v))}\\
@VVV @VVV\\
\frac{R^1_{LK'/K'}\mathbb{G}_m(K')}{\Nrd(\bfU(h')(K'))} @>\eta'>> \prod_{v\in\Omega_A}\frac{R^1_{LK'/K'}\mathbb{G}_m(K'_v)}{\Nrd(\bfU(h')(K'_v))}\\
\end{CD}
\]The map $\eta'$ is already shown to be injective. Let $\lambda\in R^1_{L/K}\mathbb{G}_m(K)\subseteq L^*$ be an element which is a reduced norm for
$\bfU(h)(K_v)$ for every $v$. Then, considered as an element of $R^1_{LK'/K'}(K')\subseteq (LK')^*$, $\lambda$ lies in $\Nrd(\bfU(h')(K'))$.
By \cite[Prop.$\;$10.2]{PaPr}, we have
\[
N_{LK'/K'}(\Nrd(\bfU(h')(K')))\subseteq \Nrd(\bfU(h)(K))\,.
\]Hence, $\lambda^{2r+1}\in \Nrd(\bfU(h)(K))$, where $2r+1=[K': K]$. It is sufficient to show that $\lambda^2\in \Nrd(\bfU(h)(K))$. For this, we choose a quadratic extension $M/K$ such that $H\otimes_KM=H\otimes_LLM$ is split. A similar argument as above, using the result in the case of odd index this time, shows
that $\lambda\in \Nrd(\bfU(h_M)(M))$. Thus,
\[
\lambda^2=N_{LM/M}(\lambda)\in
N_{LM/M}(\Nrd(\bfU(h_M)(M)))\subseteq \Nrd(\bfU(h)(K))\,.
\]This completes the proof of the lemma.
\end{proof}

\begin{thm}\label{thm5p3temp}Let $K$ be the function field of a $p$-adic arithmetic surface or a
local henselian surface with finite residue field of characteristic
$p$. Let  $L/K$ be a separable quadratic field extension
and $(D\,,\,\tau)$ a central division $L$-algebra with a
unitary $L/K$-involution whose index $\ind(D)$ is
odd and square-free. Assume further that $p\nmid 2.\ind(D)$ in the local henselian case.

Then for any nonsingular hermitian form $h$ over $(D\,,\,\tau)$, the
natural map
\[
H^1(K\,,\,\bfSU(h))\lra \prod_{v\in\Omega_A}H^1(K_v\,,\,\bfSU(h))
\]has a trivial kernel.
\end{thm}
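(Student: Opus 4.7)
The plan is to reduce Theorem~\ref{thm5p3temp} to the two local--global results just proved in this section: Proposition~\ref{prop5p1temp}, which classifies hermitian forms over $(D,\tau)$ of fixed rank and discriminant by their localizations, and Lemma~\ref{lemma5p2temp}, the Hasse principle for the quotient $(R^1_{L/K}\mathbb{G}_m)(K)/\Nrd(\bfU(h)(K))$. The device that links them is the standard exact sequence of algebraic groups
\[
1\lra \bfSU(h)\lra \bfU(h)\overset{\Nrd}{\lra} R^1_{L/K}\mathbb{G}_m\lra 1\,.
\]

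Taking Galois cohomology over $K$ and over each $K_v$ with $v\in\Omega_A$ and arranging the resulting six-term sequences side by side yields a commutative diagram with exact rows
\[
\begin{CD}
1 @>>> \frac{(R^1_{L/K}\mathbb{G}_m)(K)}{\Nrd(\bfU(h)(K))} @>\varphi>> H^1(K,\bfSU(h)) @>>> H^1(K,\bfU(h))\\
@. @VVV @VVV @VVV\\
1 @>>> \prod_{v\in\Omega_A}\frac{(R^1_{L/K}\mathbb{G}_m)(K_v)}{\Nrd(\bfU(h)(K_v))} @>>> \prod_{v\in\Omega_A}H^1(K_v,\bfSU(h)) @>>> \prod_{v\in\Omega_A}H^1(K_v,\bfU(h))
\end{CD}
\]

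I would then take a locally trivial class $\xi\in H^1(K,\bfSU(h))$ and first trace its image $[h']\in H^1(K,\bfU(h))$. By (\ref{para2p9NEW}), $h'$ is a hermitian form over $(D,\tau)$ with the same rank and discriminant as $h$, and the local triviality of $\xi$ forces $h'_{K_v}\cong h_{K_v}$ for every $v\in\Omega_A$. Since $\ind(D)=\deg(D)$ is odd, Proposition~\ref{prop5p1temp} applies and gives $h'\cong h$ over $K$; hence $[h']$ is trivial in $H^1(K,\bfU(h))$. By exactness of the top row, $\xi=\varphi(\theta)$ for some $\theta\in (R^1_{L/K}\mathbb{G}_m)(K)/\Nrd(\bfU(h)(K))$, and a chase in the diagram shows $\theta$ is itself locally trivial. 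Because $\ind(D)$ is square-free (and $p\nmid\ind(D)$ in the henselian case), Lemma~\ref{lemma5p2temp} then forces $\theta=1$, whence $\xi=1$.

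I do not expect any substantial new obstacle here: the argument is structurally parallel to, but one step shorter than, the spin-group proof of Theorem~\ref{thm4p7temp}, since in the present odd-index unitary setting there is no Rost-invariant intermediate step that must be killed off separately before entering the $\bfU(h)$-classification. The only verification required is that the standing hypothesis of the theorem --- $\ind(D)$ odd and square-free, together with $p\nmid 2\ind(D)$ in the henselian case --- supplies both the odd-degree assumption of Proposition~\ref{prop5p1temp} and the square-freeness (plus residue characteristic) hypothesis of Lemma~\ref{lemma5p2temp}, which is immediate.
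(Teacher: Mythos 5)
Your proposal is correct and follows essentially the same route as the paper's own proof: both pass a locally trivial class $\xi\in H^1(K,\bfSU(h))$ to $H^1(K,\bfU(h))$, invoke Proposition~\ref{prop5p1temp} to conclude $h'\cong h$, then use the exact cohomology sequence from $1\to\bfSU(h)\to\bfU(h)\to R^1_{L/K}\mathbb{G}_m\to 1$ together with Lemma~\ref{lemma5p2temp} to kill the remaining $\theta$. Your bookkeeping of the hypotheses ($p\neq 2$ for Proposition~\ref{prop5p1temp}, $p\nmid\ind(D)$ for Lemma~\ref{lemma5p2temp}, both supplied by $p\nmid 2\ind(D)$) is also accurate.
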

\begin{proof}
Let $\xi\in H^1(K\,,\,\bfSU(h))$ be a class that is locally trivial
in $H^1(K_v\,,\,\bfSU(h))$ for every $v\in\Omega_A$. Let $h'$ be a
hermitian form whose class $[h']\in H^1(K\,,\,\bfU(h))$ is the image
of $\xi$ under the natural map $H^1(K\,,\,\bfSU(h))\to
H^1(K\,,\,\bfU(h))$. The two forms $h'$ and $h$ have the same rank
and discriminant, and they are locally isomorphic since $\xi$ is
locally trivial. So by Proposition$\;$\ref{prop5p1temp}, $h'\cong h$
as hermitian forms over $(D\,,\,\tau)$. This means that $\xi\in
H^1(K\,,\,\bfSU(h))$ maps to the trivial element in
$H^1(K\,,\,\bfU(h))$.

Consider now the following commutative diagram with exact rows
\[
\begin{CD}
1 @>>> \frac{(R^1_{L/K}\mathbb{G}_m)(K)}{\Nrd(\bfU(h)(K))}
 @>\varphi>> H^1(K\,,\,\bfSU(h)) @>>> H^1(K\,,\,\bfU(h)) \\
&& @V{\eta}VV @VVV @VVV  \\
1 @>>>
\prod_{v\in\Omega_A}\frac{(R^1_{L/K}\mathbb{G}_m)(K_v)}{\Nrd(\bfU(h)(K_v))}
 @>>> \prod_{v\in\Omega_A}H^1(K_v\,,\,\bfSU(h)) @>>> \prod_{v\in\Omega_A}H^1(K_v\,,\,\bfU(h))
\end{CD}
\]There is an element $\theta\in (R^1_{L/K}\mathbb{G}_m)(K)/\Nrd(\bfU(h)(K))$ such that $\varphi(\theta)=\xi$.
The map $\eta$ is injective by Lemma$\;$\ref{lemma5p2temp}. So we
have $\theta=1$ and $\xi=\varphi(\theta)$ is trivial. The theorem is
thus proved.
\end{proof}

\subsection{Some observations on Suresh's exact sequence}

\newpara\label{para6p4TEMP}
Let $E$ be a field of characteristic $\neq 2$.
Let $D$ be a quaternion division algebra over a quadratic field extension $L$ of $E$. Let
$\tau$ be a unitary $L/E$-involution on $D$.
There is a unique quaternion $E$-algebra $D_0$ contained in $D$ such that
$D=D_0\otimes_EL$ and $\tau=\tau_0\otimes\iota$, where $\tau_0$ is the canonical (symplectic) involution on
$D_0$ and $\iota$ is the nontrivial element of the Galois group $\Gal(L/E)$. Then we have Suresh's exact sequence
(cf. (\ref{para2p8v2}))
\[
W(L)\overset{\wt{\pi}_1}{\lra}W(D_0\,,\,\tau_0)\overset{\wt{\rho}}{\lra}W(D\,,\,\tau)\overset{p_2}{\lra}W^{-1}(D_0,\,\tau_0)\,.
\]The goal of this subsection is to analyze the image of the map $\wt{\pi}_1$ in this sequence.

\

\newpara\label{para1p2TEMP}
With notation as in (\ref{para6p4TEMP}), let $h_0$ be a hermitian form of rank $m$ over $(D_0,\,\tau_0)$. Let $M(h_0)\in A:=\rM_m(D_0)$ be a representation matrix
of $h_0$. One can define the pfaffian norm $\mathrm{Pf}(h_0)$ as the pfaffian norm of $M(h_0)\in A$ with respect to the adjoint involution of $h_0$ on $A$
(cf. \cite[p.19]{KMRT}). This is a well defined element of the group $E^*/\Nrd(D_0^*)$. If $h_0=\langle \al_1,\dotsc, \al_m\rangle$ with
$\al_i\in E^*$, then $\mathrm{Pf}(h_0)$ is represented by the discriminant of the quadratic form $\langle \al_1,\dotsc, \al_m\rangle$ over $E$.

\begin{lemma}\label{lemma1TEMP} With notation as in $(\ref{para6p4TEMP})$, write $L=E(\sqrt{d})$ with $d\in E^*$.
Let $h_0$ be a hermitian form of even rank  over $(D_0,\,\tau_0)$.

$(\oi)$ If the class $[h_0]\in W(D_0,\,\tau_0)$ lies in the image of $\wt{\pi}_1$, then
its pfaffian norm $\mathrm{Pf}(h_0)\in E^*/\Nrd(D_0^*)$ lies in the subgroup generated by $N_{L/E}(L^*)$.

$(\ii)$ The converse of $(\oi)$ is true if $h_0$ is of rank $2$.
\end{lemma}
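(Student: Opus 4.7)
The plan for (i) is to take a preimage $q\in W(L)$ of $[h_0]$ under $\wt\pi_1$, diagonalize $q=\langle\alpha_1,\dots,\alpha_n\rangle$ with $\alpha_i=\beta_i+\gamma_i\sqrt d\in L^*$, and unwind the definitions. Using the $E$-basis $\{1,\sqrt d\}$ of $L$, a direct calculation shows that $\pi_1(\langle\alpha_i\rangle)$ is the rank-$2$ quadratic form over $E$ with matrix $\smatr{\beta_i}{\gamma_i d}{\gamma_i d}{\beta_i d}$ and determinant $d\cdot N_{L/E}(\alpha_i)$. Thus $\det\pi_1(q)\equiv d^n N_{L/E}(\alpha_1\cdots\alpha_n)\pmod{E^{*2}}$, and from the identity $-d=N_{L/E}(\sqrt d)$ this rewrites as $(-1)^n N_{L/E}((\sqrt d)^n\alpha_1\cdots\alpha_n)$. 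With the standard sign convention $\mathrm{Pf}(\langle c_1,\dots,c_r\rangle)=(-1)^{r(r-1)/2}\prod c_i$ (under which hyperbolic planes have trivial pfaffian), after first adjusting $n$ to equal $\rank(h_0)/2$ by adding hyperbolic planes of $W(L)$ to $q$, the stray $(-1)^n$ factors cancel and one obtains $\mathrm{Pf}(h_0)\equiv N_{L/E}((\sqrt d)^n\alpha_1\cdots\alpha_n)\pmod{\Nrd(D_0^*)}$, which lies in $N_{L/E}(L^*)$.

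For (ii), I would reduce via Suresh's exact sequence: $[h_0]\in\mathrm{Im}(\wt\pi_1)$ if and only if $\wt\rho([h_0])=0$ in $W(D,\tau)$. Writing $h_0=\langle a,b\rangle$ with $a,b\in E^*$, $\wt\rho(h_0)$ is the rank-$2$ hermitian form $\langle a,b\rangle$ over $(D,\tau)$, and for rank $2$ being trivial in the Witt group is the same as being isotropic. An isotropic vector of the form $(z,1)$ amounts to finding $z\in D^*$ with $z\tau(z)=-a/b$. Using the decomposition $D=D_0\otimes_E L$ and taking $z=z_0z_1$ with $z_0\in D_0^*$ and $z_1\in L^*$ (which commute in $D$, since $L$ is the centre), one computes $z\tau(z)=\Nrd(z_0)\cdot N_{L/E}(z_1)$. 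Hence it is enough to realize $-a/b$ as an element of $\Nrd(D_0^*)\cdot N_{L/E}(L^*)$. With the sign convention above $\mathrm{Pf}(h_0)=-ab$, and the hypothesis $-ab\in N_{L/E}(L^*)\cdot\Nrd(D_0^*)$, together with $b^2\in\Nrd(D_0^*)$, yields $-a/b=-ab/b^2$ in exactly this subgroup, whence isotropy and hence hyperbolicity of $\wt\rho(h_0)$ follow.

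The main obstacle will be keeping the various sign conventions consistent: the factor $(-1)^n$ coming from $d^n=(-N_{L/E}(\sqrt d))^n$ in part (i), and the choice between $\pm ab$ in $\mathrm{Pf}(\langle a,b\rangle)$ in part (ii), both have to align with the chosen normalization of the pfaffian norm. The two normalizations indicated above -- reducing $n$ to $\rank(h_0)/2$ in (i), and adopting the convention under which hyperbolic planes have trivial pfaffian -- absorb these ambiguities, leaving the whole argument to rest on the single algebraic identity $-d=N_{L/E}(\sqrt d)$.
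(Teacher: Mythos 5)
Your proposal is correct and follows essentially the same strategy as the paper's proof: for (i), computing the matrix of $\pi_1(\langle\alpha\rangle)$ and its determinant $d\cdot N_{L/E}(\alpha)$; for (ii), reducing via Suresh's exact sequence to exhibiting an explicit isotropic vector for the rank-$2$ form $\wt\rho(h_0)$ over $(D,\tau)$, with your general $z=z_0z_1\in D_0^*\cdot L^*$ playing the role of the paper's normalization of $\gamma$ by $\Nrd_{D_0}(u)$ followed by the vector $(a+b\sqrt d,1)$. The only cosmetic remark: the ``adjusting $n$'' step in (i) is unnecessary, since the pfaffian norm is already unchanged under adding hyperbolic planes and hence depends only on the Witt class of an even-rank form.
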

\begin{proof}
(i) For $a+b\sqrt{d}\in L^*$ with $a\,,\,b\in E$, the form $\wt{\pi}_1(\langle a+b\sqrt{d}\rangle)$ is represented by the matrix
\[
\matr{a}{bd}{bd}{ad}\,.
\]One can then verify that
\[
\wt{\pi}_1(\langle\,a+b\sqrt{d}\,\rangle)=\begin{cases}
\langle\,a\,,\,ad(a^2-b^2d)\,\rangle\;\;&\;\text{ if }\, a\neq 0\\
\langle\,2bd\,,\,-2bd\rangle\;\;&\;\text{ if }\, a=0\neq b
\end{cases}\,.
\]So it follows easily that $\mathrm{Pf}(\wt{\pi}_1(\langle\,a+b\sqrt{d}\,\rangle))$ is represented by an element of $N_{L/E}(L^*)$.

(ii) Conversely, let $h_0$ be a hermitian form of rank $2$ whose pfaffian norm $\mathrm{Pf}(h_0)$
is represented by an element of $N_{L/E}(L^*)$. We want to show $[h_0]\in\mathrm{Im}(\wt{\pi}_1)$. By Suresh's exact sequence,
it suffices to show that the form $\wt{\rho}(h_0)$ is hyperbolic over $(D\,,\,\tau)$.

We may assume $h_0=\langle\al\,,\,-\gamma\al\rangle$ with $\al,\,\gamma\in E^*$. The assumption on the pfaffian norm implies that
\[
\tau_0(u)u\gamma=
\Nrd_{D_0}(u)\gamma=a^2-b^2d\,
\] for some $u\in D_0^*$ and some $a,\,b\in E$. Since
\[
\langle\al\,,\,-\gamma\al\rangle\cong \langle \al\,,\,-\gamma\al \tau_0(u)u\rangle\quad\text{ over }\;\; (D_0\,,\,\tau_0)\,,
\]replacing $\gamma$ by $\gamma\tau_0(u)u=\gamma.\Nrd_{D_0}(u)$ if necessary, we may assume $\gamma=a^2-b^2d$ for some $a,\,b\in E$.
From the definition of the map $\wt{\rho}$, it follows easily that the form $\wt{\rho}(h_0)$ over $(D\,,\,\tau)$ is also represented by the diagonal
matrix $\langle\al\,,\,-\gamma\al\rangle$. But then for $v=(a+b\sqrt{d}\,,\,1)\in D^2$, one has
\[
\wt{\rho}(h_0)(v,\,v)=(\tau(a+b\sqrt{d})\,,\,\tau(1))\matr{\al}{0}{0}{-\gamma\al}\binom{a+b\sqrt{d}}{1}=\al(a^2-b^2d-\gamma)=0\,.
\]This show that the rank 2 form $\wt{\rho}(h_0)$ is isotropic and hence hyperbolic.
\end{proof}

\begin{lemma}\label{lemma2TEMP}
With notation as above, assume that the field $E$ has finite $u$-invariant $u(E)=r$. Then for any hermitian form $h_0$ of rank $m>r/3$ over $(D_0,\,\tau_0)$, the form $\wt{\rho}(h_0)$ over
$(D,\,\tau)$ is isotropic.
\end{lemma}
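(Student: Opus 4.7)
The plan is to restrict $\wt\rho(h_0)$ to an explicit $E$-linear subspace $W\subseteq V=D^m$ on which the hermitian form takes values in $E$ and is, after this restriction, a quadratic form of rank exactly $3m$; the hypothesis $3m>u(E)=r$ will then force isotropy.

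Write $L=E(\sqrt d)$ and let $D_0^-=\{u\in D_0:\tau_0(u)=-u\}$ denote the three-dimensional $E$-space of pure quaternions. Set $n_0:=\Nrd_{D_0}|_{D_0^-}$, a nondegenerate rank-$3$ quadratic form over $E$; note that $u^2=-n_0(u)$ for every $u\in D_0^-$. Since $\tau_0$ is symplectic, $D_0^{\tau_0}=E$, so $h_0$ is diagonalisable and we may write $h_0\cong\langle \alpha_1,\dots,\alpha_m\rangle$ with $\alpha_i\in E^*$. Now define
\[
W:=\bigl\{(u_1\sqrt d,\dots,u_m\sqrt d):u_i\in D_0^-\bigr\}\subseteq V=D^m,
\]
an $E$-subspace of dimension $3m$. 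For any $u\in D_0^-$ we have
\[
\tau(u\sqrt d)=\tau_0(u)\iota(\sqrt d)=(-u)(-\sqrt d)=u\sqrt d,
\]
so each coordinate of a vector in $W$ is $\tau$-fixed, and since every $\alpha_i\in E$ is central in $D$,
\[
\tau(u_i\sqrt d)\,\alpha_i\,(u_i\sqrt d)=\alpha_i(u_i\sqrt d)^2=\alpha_i d\,u_i^2=-d\alpha_i\,n_0(u_i).
\]
Summing over $i$, for $x=(u_1\sqrt d,\dots,u_m\sqrt d)\in W$,
\[
\wt\rho(h_0)(x,x)\;=\;-d\sum_{i=1}^m\alpha_i\,n_0(u_i)\;\in\;E.
\]
Thus the restriction of $\wt\rho(h_0)$ to $W$ is identified, via the $E$-linear isomorphism $W\cong(D_0^-)^m$, with the quadratic form
\[
Q:=\langle -d\alpha_1,\dots,-d\alpha_m\rangle\otimes n_0
\]
over $E$ of rank $3m$.

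By hypothesis, $3m>r=u(E)$, so $Q$ is isotropic over $E$. Any nonzero $(u_1,\dots,u_m)\in(D_0^-)^m$ with $\sum\alpha_i n_0(u_i)=0$ yields a nonzero $x\in W\subseteq V$ with $\wt\rho(h_0)(x,x)=0$, proving the desired isotropy. The only substantive point is the choice of the subspace $W$: once one notices that multiplying pure quaternions by $\sqrt d$ produces $\tau$-symmetric vectors whose squares reduce to $-d\cdot n_0(u)\in E$, the rank-$3$ factor $n_0$ is exactly what accounts for the bound $r/3$ in the statement, and the rest is a direct computation.
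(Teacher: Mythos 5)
Your proof is correct and follows essentially the same approach as the paper's: you pick out exactly the same $3m$-dimensional $E$-subspace of $D^m$ (the paper uses the span of $i\sqrt d,\,j\sqrt d,\,ij\sqrt d$ in each coordinate, which is precisely your $D_0^{-}\sqrt d$), compute the restriction to be a rank-$3m$ quadratic form, and invoke the $u$-invariant hypothesis. Your identification of the restricted form as $\langle -d\alpha_1,\dots,-d\alpha_m\rangle\otimes n_0$ via the pure-quaternion reduced norm is a slightly more conceptual packaging of the paper's coordinate computation, but the argument is the same.
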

\begin{proof}
We may assume $D_0^m$ is the underlying space of the form $h_0$ and $h_0=\langle \al_1,\dotsc, \al_m\rangle$ with $\al_i\in E^*$. Then the
underlying space of $\wt{\rho}(h_0)$ is $D^m=D_0^m\oplus D^m_0\sqrt{d}$. We fix a quaternion basis $\set{1,\,i,\,j,\,ij}$ for the quaternion algebra
$D_0$. The subspace $\mathrm{Sym}(D,\,\tau)\subseteq D$ consisting of $\tau$-invariant elements is a 4-dimensional $E$-vector space with basis
\[
1,\,i\sqrt{d}\,,\,j\sqrt{d}\,,\,ij\sqrt{d}\,.\] Let $V\subseteq \mathrm{Sym}(D,\,\tau)$ be the subspace generated by $i\sqrt{d},\,j\sqrt{d}$ and
$ij\sqrt{d}$. For $w=x_1.i\sqrt{d}+x_2.j\sqrt{d}+x_3.ij\sqrt{d}$ with $x_i\in E$, a straightforward calculation yields
\[
w^2=di^2.x_1^2+dj^2.x_2^2+d(ij)^2.x_3^2\;\in\; E\,.
\]So the map
\[
\phi\,:\; V^m\lra E\;;\quad v=(v_1,\dotsc, v_m)\longmapsto \wt{\rho}(h_0)(v,\,v)=\sum \al_iv_i^2
\]defines a quadratic form of rank $3m$ over $E$. By the assumption on the $u$-invariant of $E$, the quadratic form $\phi$
is isotropic and hence the hermitian form $\wt{\rho}(h_0)$ is isotropic.
\end{proof}

\begin{lemma}\label{lemma3TEMP}
Assume that $u(E)<12$. Then for any hermitian form $h_0$ of even rank $2n$ over $(D_0,\,\tau_0)$, one has
\[
[h_0]\,\in \;\mathrm{Im}(\wt{\pi}_1)\iff \mathrm{Pf}(h_0)\,\in\;N_{L/E}(L^*).\Nrd(D_0^*)\,.
\]
\end{lemma}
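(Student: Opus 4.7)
The implication $(\Rightarrow)$ is precisely Lemma~\ref{lemma1TEMP}(i), so the content of the lemma lies in the converse. My plan for $(\Leftarrow)$ is to argue by induction on $n$, with the base case $n=1$ being exactly Lemma~\ref{lemma1TEMP}(ii). The inductive step combines Suresh's exact sequence with Lemma~\ref{lemma2TEMP}, and the key point is that one can peel off a rank-$2$ hyperbolic factor over $(D,\tau)$ without destroying the pfaffian condition on the base.

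Concretely, suppose $n\geq 2$ and the statement holds for all smaller even ranks, and let $h_0$ of rank $2n$ satisfy $\mathrm{Pf}(h_0)\in N_{L/E}(L^*)\cdot\Nrd(D_0^*)$. By Suresh's exact sequence it suffices to show that $\wt{\rho}(h_0)=0$ in $W(D,\tau)$. Since $u(E)<12$ and $2n\geq 4>u(E)/3$, Lemma~\ref{lemma2TEMP} shows $\wt{\rho}(h_0)$ is isotropic, so we may write $\wt{\rho}(h_0)\cong \mathbb{H}_D\perp f_1$ with $f_1$ a hermitian form of rank $2n-2$ over $(D,\tau)$. Applying $p_2$ and using the exact sequence yields $p_2(f_1)=0$, hence $[f_1]=\wt{\rho}([g_0])$ for some $g_0$ over $(D_0,\tau_0)$; by adding or removing hyperbolic planes over $(D_0,\tau_0)$, which $\wt{\rho}$ sends to hyperbolic forms over $(D,\tau)$, we may assume $g_0$ has rank exactly $2n-2$.

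In $W(D,\tau)$ we now have $\wt{\rho}([h_0]-[g_0])=[\mathbb{H}_D]=0$, so by exactness $[h_0]-[g_0]=\wt{\pi}_1(\xi)$ for some $\xi\in W(L)$. Lemma~\ref{lemma1TEMP}(i) gives $\mathrm{Pf}(\wt{\pi}_1(\xi))\in N_{L/E}(L^*)\cdot\Nrd(D_0^*)$. Since the pfaffian norm (defined via the signed discriminant of the diagonalisation) is a well-defined homomorphism from the subgroup of even-rank Witt classes of $W(D_0,\tau_0)$ to $E^*/\Nrd(D_0^*)$, we deduce $\mathrm{Pf}(g_0)\equiv \mathrm{Pf}(h_0)\pmod{N_{L/E}(L^*)\Nrd(D_0^*)}$, so the hypothesis on $\mathrm{Pf}(h_0)$ forces $\mathrm{Pf}(g_0)\in N_{L/E}(L^*)\Nrd(D_0^*)$. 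The induction hypothesis applied to $g_0$ gives $[g_0]\in\mathrm{Im}(\wt{\pi}_1)$, and therefore $[h_0]=[g_0]+\wt{\pi}_1(\xi)\in\mathrm{Im}(\wt{\pi}_1)$.

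The main obstacle I anticipate is the bookkeeping for the pfaffian norm: one must justify that $\mathrm{Pf}$ is additive on even-rank Witt classes with values in $E^*/\Nrd(D_0^*)$, i.e.\ that it vanishes on the hyperbolic plane over $(D_0,\tau_0)$. Using the signed-discriminant convention in the definition of $(\ref{para1p2TEMP})$, one checks directly that $\mathrm{Pf}(\langle 1,-1\rangle)$ is trivial in $E^*/\Nrd(D_0^*)$ and that adding $\langle 1,-1\rangle$ to any diagonalisation does not alter the signed discriminant, which gives the required well-definedness. Once this is set up, the rest of the argument is essentially formal manipulation inside Suresh's exact sequence.
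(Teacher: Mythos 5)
The overall strategy — induction on $n$, base case from Lemma~\ref{lemma1TEMP}(ii), using Lemma~\ref{lemma2TEMP} to get isotropy of $\wt{\rho}(h_0)$ — is the same as the paper's. But there is a genuine gap in your inductive step, and it is precisely at the point where the paper does real work.

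You peel $\mathbb{H}_D$ off the isotropic form $\wt{\rho}(h_0)$ to get $f_1$ of rank $2n-2$, observe $p_2([f_1])=0$, and conclude by exactness that $[f_1]=\wt{\rho}([g_0])$ for some Witt class $[g_0]$. You then assert that ``by adding or removing hyperbolic planes over $(D_0,\tau_0)$ we may assume $g_0$ has rank exactly $2n-2$.'' This is not justified: adding/removing hyperbolic planes lets you realize any even rank \emph{at least} the anisotropic rank of the Witt class $[g_0]$, but nothing in the argument bounds that anisotropic rank by $2n-2$ (or even forces it to be even). Indeed, $[h_0]$ itself is one valid pre-image of $[f_1]$ under $\wt{\rho}$, of rank $2n$, and the whole collection of pre-images is the coset $[h_0]+\ker(\wt{\rho})=[h_0]+\mathrm{Im}(\wt{\pi}_1)$; there is no a priori control on the minimal rank in this coset. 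Without a strictly smaller-rank $g_0$, the induction does not advance — if you simply took $g_0=h_0$, your argument becomes circular.

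The paper avoids this by working at the level of forms rather than Witt classes: starting from an anisotropic $h_0$ of rank $2n\geq 4$, it uses the explicit isotropic vector $x_1+y_1\sqrt{d}$ of $\wt{\rho}(h_0)$ (produced by Lemma~\ref{lemma2TEMP}) to exhibit a genuine rank-$2$ $D_0$-submodule $W_0\subseteq V_0$ on which the restriction $f_0$ of $h_0$ is computed directly to equal $\wt{\pi}_1(\langle a+b\sqrt d\rangle)$; this forces $h_0\cong f_0\perp g_0$ with $g_0$ of rank exactly $2n-2$ and $[f_0]\in\mathrm{Im}(\wt{\pi}_1)$, which is what makes the rank drop and the induction go through. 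Your observations about the pfaffian being a well-defined homomorphism on even-rank Witt classes valued in $E^*/\Nrd(D_0^*)$ are correct and match what the paper implicitly uses, but they do not repair the missing rank control. To fix your proof you would need to produce, as the paper does, an explicit small-rank subform of $h_0$ rather than an abstract Witt-class pre-image.
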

\begin{proof}
In view of Lemma$\;$\ref{lemma1TEMP}, we need only to prove that if $\mathrm{Pf}(h_0)\in N_{L/E}(L^*)\Nrd(D_0^*)$, then  $[h_0]\in\mathrm{Im}(\wt{\pi}_1)$.

To prove this, we use induction on $n=\mathrm{rank}(h_0)/2$, the case $n=1$ being treated in Lemma$\;$\ref{lemma1TEMP}. Now we assume
$\mathrm{rank}(h_0)=2n\ge 4$ and $h_0$ is anisotropic. Let $V_0$ be the underlying space of $h_0$. Then the underlying space of the form $\wt{\rho}(h_0)$ is $V=V_0\oplus V_0\sqrt{d}$. By Lemma$\;$\ref{lemma2TEMP}, the form
$\wt{\rho}(h_0)$ is isotropic, that is, there is a nonzero vector $x_1+y_1\sqrt{d}\in V=V_0\oplus V_0\sqrt{d}$ such that
\[\begin{split}
0&=\wt{\rho}(h_0)(x_1+y_1\sqrt{d}\,,\,x_1+y_1\sqrt{d})\\
&=(h_0(x_1,\,x_1)-h_0(y_1,\,y_1)d)+(h_0(x_1,\,y_1)-h_0(y_1,\,x_1))\sqrt{d}\,.
\end{split}
\]Thus
\begin{equation}\label{eq5p19p1TEMP}
h_0(x_1,\,x_1)=d.h_0(y_1,\,y_1)\quad\text{and}\;\quad h_0(x_1\,,\,y_1)=h_0(y_1,\,x_1)\,.
\end{equation}Since $h_0$ is anisotropic, $h_0(x_1,\,x_1)$ and $h_0(y_1,\,y_1)$ are both nonzero and hence
lie in
\[
E^*=\set{x\in D_0^*\,|\,\tau_0(x)=x}\,.
\] In particular, $x_1\neq 0$, $y_1\neq 0$ and
\[
h_0(x_1,\,y_1)=h_0(y_1,\,x_1)\in E=\set{x\in D_0\,|\,\tau_0(x)=x}\,.
\]If $x_1=y_1\lambda$ for some $\lambda\in D_0^*$, then \eqref{eq5p19p1TEMP} yields
\[
\tau_0(\lambda)\lambda=d\quad\text{ and }\quad \tau_0(\lambda)=\lambda
\]whence $d=\lambda^2\in E^{*2}$. Since $d$ is not a square in $E$, the two vectors $x_1,\,y_1\in V_0$
generate a $D_0$-submodule $W_0:=x_1D_0+y_1D_0\subseteq V_0$ of rank 2. Put $a=h_0(y_1,\,y_1)\in E^*$ and
$bd=h_0(x_1,\,y_1)=h_0(y_1,\,x_1)\in E$. Then the restriction $f_0$ of $h_0$ to $W_0$ is represented by the matrix
\[
\matr{ad}{bd}{bd}{a}=\matr{0}{1}{1}{0}\matr{a}{bd}{bd}{ad}\matr{0}{1}{1}{0}\,.
\]A direct computation then gives
\[
\wt{\pi}_1(\langle a+b\sqrt{d}\rangle)=[f_0]\,\in\;W(D_0\,,\,\tau_0)\,.
\]This means that $h_0$ contains a subform $f_0$ of rank 2, which lies in the image of $\wt{\pi}_1$. Writing $h_0=f_0\bot g_0$, we get
$\mathrm{Pf}(g_0)\in N_{L/E}(L^*)\Nrd(D_0^*)$ since $\mathrm{Pf}(f_0)$ and $\mathrm{Pf}(h_0)$ lie in $N_{L/E}(L^*)\Nrd(D_0^*)$. Now
the induction hypothesis yields $[g_0]\in\mathrm{Im}(\wt{\pi}_1)$, whence $[h_0]=[f_0]+[g_0]\in\mathrm{Im}(\wt{\pi}_1)$.
\end{proof}

\subsection{A Hasse principle for $H^4$ of function fields of conics}

\begin{lemma}\label{lemma2p1TEMP}
Let $F$ be a field of characteristic $\neq 2$, $\overline{F}$ a separable closure of $F$ and $C\subseteq\mathbb{P}^2_F$  a smooth projective conic over $F$.
Put $\overline{C}=C\times_F\overline{F}$ and let $F(C),\, \overline{F}(C)$ denote the function fields of $C$ and $\overline{C}$ respectively.

Then the natural exact sequence
\[
0\lra \overline{F}(C)^*\otimes\mathbb{Q}_2/\mathbb{Z}_2(2)\lra \mathrm{Div}(\overline{C})\otimes\mathbb{Q}_2/\mathbb{Z}_2(2)\lra \mathrm{Pic}(\overline{C})\otimes\mathbb{Q}_2/\mathbb{Z}_2(2)\lra 0
\]induces an injection
\[
H^3(F\,,\,\overline{F}(C)\otimes\mathbb{Q}_2/\mathbb{Z}_2(2))\lra H^3(F\,,\,\mathrm{Div}(\overline{C})\otimes\mathbb{Q}_2/\mathbb{Z}_2(2))\,.
\]
\end{lemma}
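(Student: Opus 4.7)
The plan is to pass to the long exact sequence of continuous Galois cohomology associated to the stated short exact sequence. The injectivity asserted is equivalent to the vanishing of the connecting map
\[
\partial \,:\; H^2(F\,,\,\mathrm{Pic}(\overline{C}) \otimes \mathbb{Q}_2/\mathbb{Z}_2(2)) \lra H^3(F\,,\,\overline{F}(C)^* \otimes \mathbb{Q}_2/\mathbb{Z}_2(2)),
\]
or equivalently, to the surjectivity of the preceding arrow
\[
H^2(F\,,\,\mathrm{Div}(\overline{C}) \otimes \mathbb{Q}_2/\mathbb{Z}_2(2)) \lra H^2(F\,,\,\mathrm{Pic}(\overline{C}) \otimes \mathbb{Q}_2/\mathbb{Z}_2(2)).
\]
So the whole problem is to verify this latter surjectivity.

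To make both sides concrete, I would use that $\overline{C}$ is a smooth conic over a separably closed field of characteristic $\neq 2$, hence $\overline{C} \cong \mathbb{P}^1_{\overline{F}}$. Therefore $\mathrm{Pic}(\overline{C}) = \mathbb{Z}$ with trivial Galois action, and the target becomes $H^2(F,\,\mathbb{Q}_2/\mathbb{Z}_2(2))$. On the source side,
\[
\mathrm{Div}(\overline{C}) \;=\; \bigoplus_{P \in C^{(0)}} \mathrm{Ind}^{\mathrm{Gal}_F}_{\mathrm{Gal}_{F(P)}} \mathbb{Z}
\]
is a sum of Galois permutation modules, one for each closed point of $C$. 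Shapiro's lemma then rewrites
\[
H^2(F\,,\,\mathrm{Div}(\overline{C}) \otimes \mathbb{Q}_2/\mathbb{Z}_2(2)) \;=\; \bigoplus_{P \in C^{(0)}} H^2(F(P)\,,\,\mathbb{Q}_2/\mathbb{Z}_2(2)),
\]
and under this identification the map to $H^2(F\,,\,\mathbb{Q}_2/\mathbb{Z}_2(2))$ is the sum of the corestrictions $\sum_P \mathrm{Cor}_{F(P)/F}$. So the question is whether this direct sum of corestrictions is surjective.

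When $C(F) \neq \emptyset$ there is a closed point $P_0$ of degree $1$, and $\mathrm{Cor}_{F(P_0)/F} = \mathrm{id}$ gives the surjectivity for free. Otherwise $C$ is the Brauer-Severi variety of a uniquely determined quaternion algebra $Q$ over $F$; every closed point of $C$ then has even degree, and those of degree 2 have residue fields precisely the quadratic extensions of $F$ splitting $Q$. The hyperplane section provides an $F$-rational divisor on $C$ of degree $2$, which yields a Galois-equivariant lift $\mathbb{Z} \to \mathrm{Div}(\overline{C})$ whose composition with the degree map is multiplication by $2$; combined with the standard corestriction--restriction identity $\mathrm{Cor}_{F'/F}\circ \mathrm{Res}_{F'/F} = [F':F]$ for a quadratic splitting field $F'$ of $Q$, this shows that the image of the map contains $2\cdot H^2(F\,,\,\mathbb{Q}_2/\mathbb{Z}_2(2))$.

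The main obstacle, and the delicate step I would spend the most care on, is upgrading this ``$2\,H^2$'' estimate to the full group $H^2(F\,,\,\mathbb{Q}_2/\mathbb{Z}_2(2))$, since in general the cokernel of multiplication by $2$ on $H^2$ embeds into $H^3(F,\,\mathbb{Z}/2)$ and need not vanish. My expectation is that one exploits the fact that the coefficient module $\overline{F}(C)^* \otimes \mathbb{Q}_2/\mathbb{Z}_2(2)$ is $2$-divisible, together with the abundance of degree-$2$ closed points of $C$ realizing \emph{every} quadratic splitting field of $Q$; a careful choice of such closed points (and possibly points of higher even degree) should allow one to produce enough independent corestrictions to hit any residue class modulo $2\,H^2$ and conclude $\partial = 0$.
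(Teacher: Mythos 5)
Your reduction is exactly the one the paper uses: pass to the long exact sequence, identify $\mathrm{Pic}(\overline{C})\cong\mathbb{Z}$ with trivial action, write $\mathrm{Div}(\overline{C})$ as a direct sum of permutation modules indexed by closed points $P\in C^{(1)}$, apply Shapiro's lemma, and reduce to the surjectivity of the sum of corestriction maps
\[
\bigoplus_{P\in C^{(1)}}H^2(F(P)\,,\,\mathbb{Q}_2/\mathbb{Z}_2(2))\lra H^2(F\,,\,\mathbb{Q}_2/\mathbb{Z}_2(2))\,.
\]
Up to this point your proposal matches the paper. The problem is the last step: you explicitly leave the surjectivity unproved. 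You observe, correctly, that a degree-$2$ closed point $P$ gives $\mathrm{Cor}\circ\mathrm{Res}=2$, hence the image contains $2\cdot H^2$, and you note that $H^2(F,\mathbb{Q}_2/\mathbb{Z}_2(2))$ need not be $2$-divisible (the cokernel of $2$ injects into $H^3(F,\mathbb{Z}/2)$). But then you offer only a heuristic --- that using ``enough independent'' degree-$2$ points should cover the rest --- with no argument. That is a genuine gap, and moreover the suggested route is not the one that works: the paper shows a \emph{single} degree-$2$ point already suffices, so there is no need to juggle several splitting fields.

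Here is the idea you are missing. Fix one closed point $P$ of degree $2$ and consider for each $n$ the corestriction
\[
\psi_n\,:\;H^2(F(P)\,,\,\mathbb{Z}/2^n(2))\lra H^2(F\,,\,\mathbb{Z}/2^n(2))\,.
\]
By Merkurjev--Suslin this is the norm map $N_{F(P)/F}:K_2(F(P))/2^n\to K_2(F)/2^n$ in Milnor $K$-theory, whose cokernel is killed by $[F(P):F]=2$. Now $H^2(F,\mathbb{Q}_2/\mathbb{Z}_2(2))=\varinjlim_n H^2(F,\mathbb{Z}/2^n(2))$, and the transition map $H^2(F,\mathbb{Z}/2^n(2))\to H^2(F,\mathbb{Z}/2^{n+1}(2))$ is induced by the inclusion $\mathbb{Z}/2^n\hookrightarrow\mathbb{Z}/2^{n+1}$, i.e.\ it is multiplication by $2$ on $K_2(F)/2^{\bullet}$. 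So any class $\alpha\in K_2(F)/2^n$ maps in the next term of the colimit to a class of the form $2\bar{a}$ with $\bar{a}\in K_2(F)/2^{n+1}$, and this is in the image of $\psi_{n+1}$ because the cokernel of $\psi_{n+1}$ is $2$-torsion. Passing to the colimit, $\psi=\varinjlim\psi_n$ is surjective. Thus the ``$2H^2$'' estimate closes itself once you keep track of the transition maps in the direct limit; no divisibility of $H^2$ and no interplay between several degree-$2$ points is needed.
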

\begin{proof}
Let  $C^{(1)}$ be the set of closed points of $C$. For each $P\in C^{(1)}$, let $G_P$ be the absolute Galois group of the residue field $F(P)$
of $P$. This is an open subgroup of $G=\mathrm{Gal}(\overline{F}/F)$. Write $M_P=\mathrm{Hom}_{G_P}(\mathbb{Z}[G]\,,\,\mathbb{Z})$. We have an isomorphism of abelian groups
$M_P\cong \bigoplus_{Q\mapsto P}\mathbb{Z}$, where the notation $Q\mapsto P$ means that $Q$ runs over the closed points of $\overline{C}$ lying over $P$. On the other hand, we have an isomorphism of $G$-modules:
\[
\mathrm{Div}(\overline{C})\cong\bigoplus_{P\in C^{(1)}}M_P\,.
\]
Since $C$ is a smooth projective conic, $\mathrm{Pic}(\overline{C})\cong \mathbb{Z}$ as $G$-modules. 
The natural map $\mathrm{Div}(\overline{C})\to \mathrm{Pic}(\overline{C})$ can be identified with the summation map 
\[
\sigma\,:\; \bigoplus_{P\in C^{(1)}}\bigoplus_{Q\mapsto P}\mathbb{Z}\longrightarrow \mathbb{Z}
\]So the exact sequence in the lemma may be  identified with the following
\[
0\lra \overline{F}(C)^*\otimes\mathbb{Q}_2/\mathbb{Z}_2(2)\lra \bigoplus_{P\in C^{(1)}}M_P\otimes\mathbb{Q}_2/\mathbb{Z}_2(2)\overset{\sigma}{\lra}\mathbb{Q}_2/\mathbb{Z}_2(2)\lra 0\,.
\]For any $i\ge 0$, 
\[
H^i\left(F,\,M_P\otimes\mathbb{Q}_2/\mathbb{Z}_2(2)\right)=H^i(F(P)\,,\,\mathbb{Q}_2/\mathbb{Z}_2(2))\,.
\]
It is thus sufficient to prove that the map
\[
\bigoplus_{P\in C^{(1)}} H^2(F(P)\,,\,\mathbb{Q}_2/\mathbb{Z}_2(2))\lra H^2(F\,,\,\mathbb{Q}_2/\mathbb{Z}_2(2))\,
\]
is surjective. In fact, we can choose a closed point $P\in C^{(1)}$ of degree 2 and consider the corresponding map
\[
\psi\,: \; H^2(F(P)\,,\,\mathbb{Q}_2/\mathbb{Z}_2(2))\lra H^2(F\,,\,\mathbb{Q}_2/\mathbb{Z}_2(2))\,,
\]which coincides with the corestriction map. We claim that this map is already surjective. To see this, consider for each $n\in\mathbb{N}$ the corestriction map
\[
\psi_n\,:\; H^2(F(P)\,,\,\mathbb{Z}/2^n(2))\lra H^2(F\,,\,\mathbb{Z}/2^n(2))\,.
\]By the Merkurjev--Suslin theorem, the map $\psi_n$ may be identified with the norm map
\[
N_{F(P)/F}\,:\; K_2(F(P))/2^n\lra K_2(F)/2^n
\]in Milnor's $K$-theory. The cokernel of this norm map is killed by $2=[F(P): F]$. So taking limits yields the surjectivity of the map
$\psi$. This proves the lemma.
\end{proof}

\begin{thm}\label{thm2p2TEMP}
Let $K$ be the function field of a $p$-adic arithmetic surface or a local henselian surface with finite
residue field of characteristic $p$. Assume $p\neq 2$ in the local henselian case. Let $C$ be a smooth projective conic in $\mathbb{P}^2_K$.

Then the natural map
\[
H^4(K(C)\,,\,\mathbb{Z}/2)\lra \prod_{v\in\Omega_A} H^4(K_v(C)\,,\,\mathbb{Z}/2)
\]is injective, where $v$ runs over all divisorial valuations of $K$.
\end{thm}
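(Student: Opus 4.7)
The plan is to use the Bloch--Ogus/Gersten resolution for the smooth projective conic $C$ to inject $H^4(K(C),\mathbb{Z}/2)$ into a direct sum of groups $H^3(K(P),\mathbb{Z}/2)$ indexed by the closed points $P$ of $C$, and then to invoke the Kato--Saito Hasse principle of Theorem \ref{thm1p3temp} at each residue field $K(P)$.

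For any smooth projective curve $Y$ over a field $F$ of characteristic $\neq 2$, the Gersten resolution yields an exact sequence
\[
0\lra H^n_{\mathrm{unr}}(F(Y)/F,\,\mathbb{Z}/2)\lra H^n(F(Y),\,\mathbb{Z}/2)\overset{(\partial_P)}{\lra}\bigoplus_{P\in Y^{(1)}}H^{n-1}(F(P),\,\mathbb{Z}/2)\,,
\]
whose first term is the unramified cohomology and whose $\partial_P$ are the residue maps. When $Y=C$ is a smooth projective conic split by a quaternion algebra $D_0$, Arason's theorem on Pfister quadrics identifies $H^n_{\mathrm{unr}}(F(C)/F,\mathbb{Z}/2)$ with the quotient $H^n(F,\mathbb{Z}/2)/(D_0)\cdot H^{n-2}(F,\mathbb{Z}/2)$. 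The residue field of every divisorial valuation of our $K$ has $\mathrm{cd}_2\le 2$, so $\mathrm{cd}_2(K)\le 3$ and $\mathrm{cd}_2(K_v)\le 3$ for every $v\in\Omega_A$; in particular $H^4(K,\mathbb{Z}/2)=0=H^4(K_v,\mathbb{Z}/2)$. Taking $n=4$ therefore produces injections
\[
H^4(K(C),\mathbb{Z}/2)\hookrightarrow\bigoplus_{P\in C^{(1)}}H^3(K(P),\mathbb{Z}/2),\quad H^4(K_v(C),\mathbb{Z}/2)\hookrightarrow\bigoplus_{P_v\in C_{K_v}^{(1)}}H^3(K_v(P_v),\mathbb{Z}/2)\,.
\]

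Each residue field $K(P)$ is a finite extension of $K$, hence a field of the same type as $K$ by \ref{para3p2NEW}, and its divisorial valuations are exactly the places $w$ lying over some $v\in\Omega_A$. By Theorem \ref{thm1p3temp} applied to $K(P)$, the natural map $H^3(K(P),\mathbb{Z}/2)\hookrightarrow\prod_w H^3(K(P)_w,\mathbb{Z}/2)$ is injective. Since the closed points $P_v$ of $C_{K_v}$ correspond to the places $w\,|\,v$ of $K(P)$ via $K_v(P_v)\cong K(P)_w$, these injections at various $P$ assemble into
\[
\bigoplus_P H^3(K(P),\mathbb{Z}/2)\hookrightarrow\prod_v\bigoplus_{P_v}H^3(K_v(P_v),\mathbb{Z}/2)\,.
\]
Combining this with the preceding Gersten injections via the functoriality of residue maps under the base change $K\hookrightarrow K_v$ gives a commutative square in which the right vertical arrow and both horizontal arrows are injective; a diagram chase then yields the desired injection $H^4(K(C),\mathbb{Z}/2)\hookrightarrow\prod_v H^4(K_v(C),\mathbb{Z}/2)$.

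The principal obstacle is the vanishing of the unramified cohomology $H^4_{\mathrm{unr}}(K(C)/K,\mathbb{Z}/2)$ for the conic, which in degree $4$ rests on Arason's theorem together with the Milnor conjecture (Voevodsky). An alternative route, more in the spirit of Lemma \ref{lemma2p1TEMP}, is to identify $H^4(K(C),\mathbb{Z}/2)\cong H^3(K,\overline{K}(C)^*/2)$ via the Hochschild--Serre spectral sequence (using $\mathrm{cd}(\overline{K}(C))=1$ and $\mathrm{cd}_2(K)\le 3$) and then to work with the long exact sequence attached to $0\to\overline{K}(C)^*/2\to\mathrm{Div}(\overline{C})/2\to\mathbb{Z}/2\to 0$; the vanishing of the connecting homomorphism $H^2(K,\mathbb{Z}/2)\to H^3(K,\overline{K}(C)^*/2)$ that one needs amounts to the same Arason-type input.
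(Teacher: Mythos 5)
Your overall plan—inject $H^4$ of the conic's function field into a direct sum of $H^3$'s of residue fields, then apply the Kato--Saito Hasse principle (Theorem~\ref{thm1p3temp}) to each $K(P)$—is the same strategy the paper follows, and your ``alternative route'' at the end is essentially the paper's own proof. But there is a genuine gap in both of your routes, at the step where you dispose of the unramified cohomology.

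You assert that ``Arason's theorem on Pfister quadrics identifies $H^n_{\mathrm{unr}}(F(C)/F,\mathbb{Z}/2)$ with the quotient $H^n(F,\mathbb{Z}/2)/(D_0)\cdot H^{n-2}(F,\mathbb{Z}/2)$.'' Arason's theorem (and its higher-degree generalizations) computes the \emph{kernel} of the restriction $H^n(F,\mathbb{Z}/2)\to H^n(F(C),\mathbb{Z}/2)$; it does not assert that the map $H^n(F,\mathbb{Z}/2)\to H^n_{\mathrm{unr}}(F(C)/F,\mathbb{Z}/2)$ is \emph{surjective}. That surjectivity is exactly the crux, and it is what forces you to analyze, via the sequence $0\to\overline{F}(C)^*/2\to\mathrm{Div}(\overline{C})/2\to\mathbb{Z}/2\to 0$, whether the map $\bigoplus_{P}H^2(K(P),\cdot)\to H^2(K,\cdot)$ is surjective (equivalently, whether the connecting map $H^2(K,\cdot)\to H^3(K,\overline{K}(C)^*\otimes\cdot)$ vanishes). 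With $\mathbb{Z}/2$ coefficients this is not automatic: for a degree-$2$ point $P_0$ on $C$ one has $\mathrm{cor}\circ\mathrm{res}=2=0$ on $\mathbb{Z}/2$-cohomology, so the composite tells you nothing, and indeed over $\mathbb{R}$ (where every closed point of the anisotropic conic has residue field $\mathbb{C}$) the map $\bigoplus_P H^2(F(P),\mathbb{Z}/2)\to H^2(F,\mathbb{Z}/2)$ fails to be surjective.

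The paper's fix is precisely what your proposal omits: the very first step of the paper's proof replaces $\mathbb{Z}/2$ by $\mathbb{Q}_2/\mathbb{Z}_2(3)$ (injectively, by Merkurjev--Suslin/Bloch--Kato), and then Lemma~\ref{lemma2p1TEMP} uses a degree-$2$ closed point $P_0\in C^{(1)}$ together with the identity $\mathrm{cor}\circ\mathrm{res}=[F(P_0):F]=2$ on $K_2(\cdot)/2^n$ and a passage to the direct limit over $n$ to show that $\mathrm{cor}\colon H^2(K(P_0),\mathbb{Q}_2/\mathbb{Z}_2(2))\to H^2(K,\mathbb{Q}_2/\mathbb{Z}_2(2))$ is surjective. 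The divisibility of $\mathbb{Q}_2/\mathbb{Z}_2$ is what makes ``cokernel killed by $2$'' into ``cokernel zero'' in the limit. This is a purely cohomological corestriction/divisibility argument, not ``the same Arason-type input.'' Your two routes are otherwise sound (the Gersten/Bloch--Ogus formulation is a perfectly good packaging of the same reduction, and the assembly over $v\in\Omega_A$ via $K_{(v)}\otimes_K K(P)\cong\prod_{w\mid v}K(P)_{(w)}$ is handled correctly in the paper), but the missing passage to $\mathbb{Q}_2/\mathbb{Z}_2$ coefficients and the unjustified vanishing of the connecting homomorphism leave a real hole where the main work of Lemma~\ref{lemma2p1TEMP} is done.
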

\begin{proof}
By the Merkurjev--Suslin theorem, we may replace $\mathbb{Z}/2$ by $\mathbb{Q}_2/\mathbb{Z}_2(3)$. Also, we may replace the completion $K_v$ by the henselisation $K_{(v)}$ for each $v$ (cf. \cite[Thm.$\;$2.9 and its proof]{Jannsen09}).
Let $\overline{K}$ be a separable closure of $K$.
Then we have a diagram of field extensions
\[
\xymatrix{
\overline{K}(C) & \overline{K} \ar[l]_{}\\
K_{(v)}(C) \ar[u]^{} & K_{(v)} \ar[l]_{} \ar[u]_{} \\
K(C) \ar[u]_{} & K \ar[l]_{} \ar[u]_{}
}
\]which identifies the Galois groups
\[
\Gal(\overline{K}/K)=\Gal(\overline{K}(C)/K(C))\quad\text{ and }\quad \Gal(\overline{K}/K_{(v)})=\Gal(\overline{K}(C)/K_{(v)}(C))\,.
\]This induces Hochschild-Serre spectral sequences
\[
E_2^{pq}(K)=H^p(K\,,\,H^q(\overline{K}(C)\,,\,\mathbb{Q}_2/\mathbb{Z}_2(3))) \Longrightarrow H^{p+q}(K(C)\,,\,\mathbb{Q}_2/\mathbb{Z}_2(3))
\]and
\[
E_2^{pq}(K_{(v)})=H^p(K_{(v)}\,,\,H^q(\overline{K}(C)\,,\,\mathbb{Q}_2/\mathbb{Z}_2(3))) \Longrightarrow H^{p+q}(K_{(v)}(C)\,,\,\mathbb{Q}_2/\mathbb{Z}_2(3))\,.
\]Using
\[
\mathrm{cd}_2(\overline{K}(C))\le 1\,\quad \text{ and }\quad \mathrm{cd}_2(K_{(v)})\le \mathrm{cd}_2(K)\le 3\,,
\]one finds easily that the above spectral sequences induce canonical isomorphisms
\[
H^4(K(C)\,,\,\mathbb{Q}_2/\mathbb{Z}_2(3))\cong H^3(K\,,\,H^1(\overline{K}(C)\,,\,\mathbb{Q}_2/\mathbb{Z}_2(3)))
\]
and
\[
H^4(K_{(v)}(C)\,,\,\mathbb{Q}_2/\mathbb{Z}_2(3))\cong H^3(K_{(v)}\,,\,H^1(\overline{K}(C)\,,\,\mathbb{Q}_2/\mathbb{Z}_2(3)))\,.
\]Since $H^1(\overline{K}(C)\,,\,\mathbb{Q}_2/\mathbb{Z}_2(3))\cong \overline{K}(C)^*\otimes\mathbb{Q}_2/\mathbb{Z}_2(2)$, we need only prove the injectivity of the natural map
\[
H^3(K\,,\,\overline{K}(C)^*\otimes\mathbb{Q}_2/\mathbb{Z}_2(2))\lra \prod_{v\in\Omega_A}H^3(K_{(v)}\,,\,\overline{K}(C)^*\otimes\mathbb{Q}_2/\mathbb{Z}_2(2))
\]is injective.

By Lemma$\;$\ref{lemma2p1TEMP}, we have an injection
\[
H^3(K\,,\,\overline{K}(C)^*\otimes\mathbb{Q}_2/\mathbb{Z}_2(2))\hookrightarrow \bigoplus_{p\in C^{(1)}}H^3(K(P)\,,\,\mathbb{Q}_2/\mathbb{Z}_2(2))\,.
\]
For each $v$, let $C_{(v)}=C\times_KK_{(v)}$ be the base extension of $C$ and let $K_{(v)}(C)$ denote the function field of
$C_{(v)}$. By functoriality, we may reduce to proving the injectivity of the map
\[
\varphi\,:\;\;\bigoplus_{P\in C^{(1)}}H^3(K(P)\,,\,\mathbb{Q}_2/\mathbb{Z}_2(2))\lra \prod_{v\in\Omega_A}\bigoplus_{Q\in C_{(v)}^{(1)}}H^3(K_{(v)}(Q)\,,\,\mathbb{Q}_2/\mathbb{Z}_2(2))\,.
\]For fixed $v$ and $P\in C^{(1)}$, the corresponding component $\varphi_{v,\,P}$ of  the map $\varphi$ is given by
\[
\varphi_{v,\,P}\,:\;\;H^3(K(P)\,,\,\mathbb{Q}_2/\mathbb{Z}_2(2))\lra \bigoplus_{Q\,|\,P}H^3(K_{(v)}(Q)\,,\,\mathbb{Q}_2/\mathbb{Z}_2(2))\,,
\]where $Q$ runs over the points of the fiber $C_{(v)}\times_CP=\mathrm{Spec}(K_{(v)}\otimes_KK(P))$.
An element $\alpha=(\alpha_P)\in \oplus_{P}H^3(K(P)\,,\,\mathbb{Q}_2/\mathbb{Z}_2(2))$ lies in $\ker(\varphi)$ if and only if for each $P\in C^{(1)}$, $\alpha_P$
lies in the kernel of
\[
\varphi_P=\prod_{v}\varphi_{v,\,P}\,:\;\;H^3(K(P)\,,\,\mathbb{Q}_2/\mathbb{Z}_2(2))\lra\prod_{v}H^3_{\text{\'et}}(K_{(v)}\otimes_KK(P)\,,\,\mathbb{Q}_2/\mathbb{Z}_2(2))\,.
\]It suffices to prove that for every $P$, the map $\varphi_P$ is injective.

Replacing $K(P)$ by the separable closure of $K$ in $K(P)$ if necessary, we may assume that $K(P)/K$  is a finite separable extension.
Then we have
\[
K_{(v)}\otimes_KK(P)\cong\prod_{w\,|\,v} K(P)_{(w)}\,,
\](cf. \cite[IV.18.6.8]{EGA8}). So the map $\varphi_P$ gets identified with the natural map
\[
H^3(K(P)\,,\,\mathbb{Q}_2/\mathbb{Z}_2(2))\lra \prod_{w}H^3(K(P)_{(w)}\,,\,\mathbb{Q}_2/\mathbb{Z}_2(2))\,,
\]where $w$ runs over divisorial valuations of $K(P)$. This map is injective by Theorem$\;$\ref{thm1p3temp}.
The theorem is thus proved.
\end{proof}

\begin{coro}\label{coro2p3TEMP}
Let $K$ be the function field of a $p$-adic arithmetic surface or a local henselian surface with finite residue field of charachteristic
$p$. Assume $p\neq 2$ in the local henselian case. Let $C$ a smooth projective conic in $\mathbb{P}^2_K$.

Then the natural map
\[
I^4(K(C))\lra \prod_{v\in\Omega_A} I^4(K_v(C))
\]is injective, where $v$ runs over  the set $\Omega_A$ of divisorial valuations of $K$.
\end{coro}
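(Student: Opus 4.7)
The plan is to imitate the proof of Lemma~\ref{lemma2p1temp}, shifted up by one degree: the role of the Arason invariant $e_3:I^3\to H^3(\cdot,\,\mathbb{Z}/2)$ will be played by the degree $4$ invariant $e_4:I^4\to H^4(\cdot,\,\mathbb{Z}/2)$, and the role of the $H^3$-Hasse principle (Theorem~\ref{thm1p3temp}) will be played by the $H^4$-Hasse principle for function fields of conics just proved as Theorem~\ref{thm2p2TEMP}. Concretely, I would contemplate the commutative square
\[
\begin{CD}
I^4(K(C)) @>e_4>> H^4(K(C),\,\mathbb{Z}/2)\\
@VVV @VVV\\
\prod_{v\in\Omega_A}I^4(K_v(C)) @>e_4>> \prod_{v\in\Omega_A}H^4(K_v(C),\,\mathbb{Z}/2)
\end{CD}
\]
and read off the desired injectivity of the left vertical arrow from the injectivity of the top horizontal arrow and of the right vertical arrow.

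The right vertical arrow is injective by Theorem~\ref{thm2p2TEMP}. For the injectivity of the top arrow, I would invoke the Milnor conjecture (proved in this range by Orlov--Vishik--Voevodsky), which identifies the kernel of $e_n:I^n(F)\to H^n(F,\,\mathbb{Z}/2)$ with $I^{n+1}(F)$ for every field $F$ of characteristic $\neq 2$. It is thus enough to establish $I^5(K(C))=0$. In both settings $\mathrm{cd}_2(K)\le 3$ (with $p\neq 2$ in the henselian case), and since $K(C)$ has transcendence degree $1$ over $K$, we have $\mathrm{cd}_2(K(C))\le 4$, whence $H^5(K(C),\,\mathbb{Z}/2)=0$. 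The isomorphism $e_5:I^5/I^6\simto H^5(K(C),\,\mathbb{Z}/2)$ then gives $I^5(K(C))=I^6(K(C))=\cdots$, and the Arason--Pfister Hauptsatz forces this common value to vanish.

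Putting the pieces together: if $[q]\in I^4(K(C))$ is locally trivial with respect to $\Omega_A$, then by commutativity $e_4([q])\in H^4(K(C),\,\mathbb{Z}/2)$ is locally trivial, hence zero by Theorem~\ref{thm2p2TEMP}, and finally $[q]=0$ by the injectivity of $e_4$ established above. I do not anticipate any genuine obstacle here: the substantive content sits in Theorem~\ref{thm2p2TEMP}, and the remainder is the standard \emph{Milnor conjecture $+$ Arason--Pfister Hauptsatz} package already used one degree lower in Lemma~\ref{lemma2p1temp}.
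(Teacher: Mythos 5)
Your proposal is correct and takes essentially the same approach as the paper: the paper also reduces to Theorem~\ref{thm2p2TEMP} via the identification $I^4(F)\cong H^4(F,\mathbb{Z}/2)$ for $F=K(C)$ or $K_v(C)$, citing Voevodsky and Orlov--Vishik--Voevodsky. You have merely unpacked the standard reason why $e_4$ is injective here (namely $I^5(K(C))=0$ via $\mathrm{cd}_2\le 4$ and Arason--Pfister), which the paper leaves implicit.
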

\begin{proof}
For $F=K(C)$ or $K_v(C)$, we have $\mathrm{cd}_2(F)\le 4$. By the degree 4 case of the Milnor conjecture (cf. \cite{Voe03} and \cite{OVV}), we have an isomorphism $I^4(F)\cong H^4(F,\,\mathbb{Z}/2)$. (In the $p$-adic arithmetic case, we can also deduce this isomorphism from
\cite[p.655, Prop.$\;$2]{AEJ86} together with \cite[Thm.$\;$3.4]{Le10}.) The result then follows immediately from Theorem$\;$\ref{thm2p2TEMP}.
\end{proof}

\subsection{Case of even index}

\begin{prop}\label{prop3p2TEMP}
Let  $K$ be the function field of a $p$-adic arithmetic surface or a local henselian surface with finite residue field of characteristic $p$. Let $L/K$ be a quadratic field extension, $(D,\,\tau)$ a central division algebra over $L$ with a unitary $L/K$-involution whose index is not divisible by $4$. Let $h$ be a nonsingular hermitian form over $(D,\,\tau)$ which has even rank, trivial discriminant and trivial Rost invariant $($cf. $(\ref{para3p1TEMP}))$.
Assume $p\neq 2$ if $\ind(D)$ is even. In the local henselian case, assume further that the Hasse principle with respect to divisorial valuations holds for quadratic forms of rank $6$ over $K$.

Then we have $[h]=0\in W(D,\,\tau)$  if and only if $[h\otimes_KK_v]=0\in W(D\otimes_KK_v\,,\,\tau)$ for every $v\in\Omega_A$.
\end{prop}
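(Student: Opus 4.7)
My plan is to split into cases according to the structure of $D$. Under the hypothesis $4\nmid\ind(D)$, one has $\ind(D)$ odd, or $\ind(D)=2$, or $\ind(D)=2k$ with $k>1$ odd. I would first reduce the mixed case $\ind(D)=2k$, $k>1$ odd, to the quaternion case $\ind(D)=2$: writing $D=H\otimes_L D'$ with $H$ quaternion and $D'$ of odd index, pass to an odd-degree extension $K'/K$ splitting $D'$, use Morita equivalence to transport $h_{K'}$ to a hermitian form over a quaternion algebra with unitary involution, apply the quaternion case there, and descend via the Bayer-Fluckiger--Lenstra injectivity theorem \cite[p.~80, Coro.~6.18]{KMRT}. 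The odd-index case is immediate from Proposition~\ref{prop5p1temp}: let $h_2$ be a hyperbolic form of the same rank as $h$; both have trivial discriminant, and local triviality of $[h]$ combined with Witt's cancellation gives $h_{K_v}\cong (h_2)_{K_v}$ for every $v\in\Omega_A$, whence Proposition~\ref{prop5p1temp} yields $h\cong h_2$, i.e., $[h]=0$.

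The substantive case is $\ind(D)=2$. Write $D=D_0\otimes_K L$ and $\tau=\tau_0\otimes\iota$ with $\tau_0$ the canonical symplectic involution on $D_0$ as in (\ref{para6p4TEMP}), and $L=K(\sqrt{d})$. The main tool will be Suresh's exact sequence
\[
W(L)\overset{\wt{\pi}_1}{\lra}W(D_0,\tau_0)\overset{\wt{\rho}}{\lra}W(D,\tau)\overset{p_2}{\lra}W^{-1}(D_0,\tau_0).
\]
I will proceed in three substeps: (a) show $p_2([h])=0$; (b) lift to $[h_0]\in W(D_0,\tau_0)$ with $\wt{\rho}([h_0])=[h]$ by exactness; (c) show $[h_0]\in\mathrm{Im}(\wt{\pi}_1)$, which then forces $[h]=0$. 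For (a), I will choose a pure quaternion $\mu\in D_0^*$ with $\tau_0(\mu)=-\mu$; the scaling $\phi_\mu$ converts $p_2([h])\in W^{-1}(D_0,\tau_0)$ to a class $h^{\#}\in W(D_0,\sigma)$ with $\sigma=\Int(\mu)\circ\tau_0$ orthogonal on $D_0$, and $h^{\#}$ is locally hyperbolic by functoriality. The plan is then to verify that $h^{\#}$ has even rank, trivial discriminant, trivial Clifford invariant, and trivial Rost invariant, so that Proposition~\ref{prop4p5temp} forces $h^{\#}$ to be hyperbolic and hence $p_2([h])=0$. For (c), since $u(K)=8<12$ (by \cite{PaSu}, \cite{Le10} or \cite{HHK} in the $p$-adic arithmetic case, and by \cite{Hu11} in the local henselian case), Lemma~\ref{lemma3TEMP} reduces the question to showing $\mathrm{Pf}(h_0)\in N_{L/K}(L^*)\cdot\Nrd(D_0^*)$.

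The hard parts will be (a) and (c). In (a), I will need to propagate the trivial discriminant and trivial Rost invariant of $h$ through $p_2$ and the scaling $\phi_\mu$, along the lines of the compatibility analysis in (\ref{para2p12NEW}), to deduce the vanishing of the three invariants of $h^{\#}$; the Rost-invariant computation will be the most delicate because the Rost invariant of $h$ is only defined modulo $\mathrm{Cores}_{L/K}((L^{*1})\cup (D))$. In (c), the pfaffian-norm condition will translate to the isotropy over $K$ of a rank-$6$ quadratic form built out of $\mathrm{Pf}(h_0)$, the norm form $n_{D_0}$ of $D_0$ and $\langle 1,-d\rangle$; local isotropy will follow from the local triviality of $[h]$, and global isotropy is exactly where the assumed rank-$6$ Hasse principle over $K$ is invoked in the local henselian case (in the arithmetic case it is automatic by \cite[Thm.~3.1]{CTPaSu}).
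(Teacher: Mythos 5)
Your reduction to the quaternion case $\ind(D)=2$ (via an odd-degree extension splitting the odd part and Bayer-Fluckiger--Lenstra), your treatment of the odd-index case via Proposition~\ref{prop5p1temp}, your use of Suresh's exact sequence as the organizing frame, and your step~(c) (translating $\mathrm{Pf}(h_0)\in N_{L/K}(L^*)\Nrd(D_0^*)$ into isotropy of the rank-$6$ form $\lambda.n_{D_0}\bot\langle -1,d\rangle$, applying the rank-$6$ Hasse principle, then Lemma~\ref{lemma3TEMP}) all match the paper's argument. The divergence is in step~(a), and there you have a real gap.

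Your plan for showing $p_2([h])=0$ is to scale $p_2(h)$ by a skew element $\mu\in D_0^*$ to get a form $h^\#$ over $(D_0,\sigma)$ with $\sigma=\Int(\mu)\circ\tau_0$ orthogonal, then to ``propagate'' the triviality of $\disc(h)$ and $\mathscr{R}(h)$ to conclude that $h^\#$ has trivial discriminant, trivial Clifford invariant and trivial Rost invariant, so that Proposition~\ref{prop4p5temp} forces it to be hyperbolic. This propagation is not available. The compatibility statements you invoke from (\ref{para2p12NEW}) concern the map $\wt{\rho}$ in the Parimala--Sridharan--Suresh exact sequence (with \cite[Prop.\;3.2.2--3.2.3]{BP1} and \cite[Prop.\;3.20]{BP2} behind them); there is no analogous formula in the paper (or in the cited literature) relating the discriminant/Clifford/Rost invariants of $h$ over $(D,\tau)$ to those of $p_2(h)$, and the invariants live in different groups ($K^*/N_{L/K}(L^*)$ versus $K^*/K^{*2}$, quotients by different subgroups for the Rost invariant). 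Nor can you fall back on a local-global argument for these invariants: you do know $h^\#$ is locally hyperbolic, hence has locally trivial discriminant, but $K^*/K^{*2}\to\prod_v K_v^*/K_v^{*2}$ has an enormous kernel, so local triviality of $\disc(h^\#)$ gives you nothing over $K$. In short, you cannot get Proposition~\ref{prop4p5temp} to apply to $h^\#$ without already knowing something close to what you are trying to prove.

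The paper's route around this is quite different and is where Theorem~\ref{thm2p2TEMP}/Corollary~\ref{coro2p3TEMP} (Hasse principle for $H^4$ and $I^4$ over the function field of a conic) are used. One passes to $K(C)$, the function field of the conic of $D_0$. Over $K(C)$ the algebra $D=D_0\otimes_KL$ splits, and Morita theory turns $h\otimes_KK(C)$ into a hermitian form over $(L(C),\iota)$, whose trace form $q_{h,C}$ is a quadratic form over $K(C)$. Because $D$ is split over $K(C)$, the standard identification of invariants (\cite[Example\;31.44]{KMRT}) applies cleanly: the triviality of $\disc(h)$ and $\mathscr{R}(h)$ forces $q_{h,C}\in I^4(K(C))$. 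Local hyperbolicity of $h$ then gives $[q_{h,C}]=0$ by Corollary~\ref{coro2p3TEMP}, hence $[h\otimes K(C)]=0$, and finally the injectivity of $W^{-1}(D_0,\tau_0)\to W^{-1}(D_0\otimes K(C),\tau_0)$ --- the hermitian analogue of Springer's theorem from \cite{PSS} --- descends this to $p_2([h])=0$ over $K$. This detour through $K(C)$ is precisely what lets one avoid any direct computation of invariants of $p_2(h)$, and it is the technical content missing from your step~(a).
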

\begin{proof}If the index $\mathrm{ind}(D)$ is odd, the result is already proved in Proposition$\;$\ref{prop5p1temp}. We assume next that $\ind(D)$
is even and not divisible by 4.

We first consider the case where $D$ is a quaternion algebra.
As in (\ref{para6p4TEMP}), we write $D=D_0\otimes_KL$ with $D_0$ a quaternion division algebra over $K$ and $L=K(\sqrt{d})$ with
$d\in K^*$, and we have Suresh's exact sequence
\begin{equation}\label{eq3p2p1TEMP}
W(L)\overset{\wt{\pi}_1}{\lra}W(D_0\,,\,\tau_0)\overset{\wt{\rho}}{\lra}W(D\,,\,\tau)\overset{p_2}{\lra}W^{-1}(D_0,\,\tau_0)
\end{equation}

Let $C\subseteq \mathbb{P}^2_K$ be the smooth projective conic associated to the quaternion algebra $D_0$. Then the algebra $D\otimes_KK(C)=D_0\otimes_KL(C)$ is a split central simple algebra over $L(C)$ with a unitary $L(C)/K(C)$-involution $\tau$. By Morita theory, the hermitian form $h\otimes_KK(C)$ over $(D\otimes_KK(C)\,,\,\tau)$
corresponds to a hermitian form $h'_C$ over $(L(C)\,,\,\iota)$, where $\iota$ denotes the nontrivial element of the Galois group $\mathrm{Gal}(L(C)/K(C))$.
The trace form $q_{h,\,C}$ of $h'_C$ gives a quadratic form over $K(C)$. By \cite[Example$\;$31.44]{KMRT}, the quadratic form $q_{h,\,C}$ has even rank,
trivial discriminant, trivial Clifford invariant and trivial Rost invariant, since $h'_C$ has even rank, trivial discriminant and trivial Rost invariant (these invariants being invariant under Morita equivalence). Hence in the Witt group $W(K(C))$ we have $[q_{h,\,C}]\in I^4(K(C))$. Since $h$ is locally hyperbolic, it follows from Corollary$\;$\ref{coro2p3TEMP} that $[q_{h,\,C}]=0\in W(K(C))$, whence $[h\otimes_KK(C)]=0\in W(D\otimes_KK(C)\,,\,\tau)$. In the commutative diagram
\[
\begin{CD}
W(D,\,\tau) @>p_2>> W^{-1}(D_0,\,\tau_0)\\
@VVV @VVV\\
W(D\otimes_KK(C)\,,\,\tau) @>p_2>>  W^{-1}(D_0\otimes_KK(C)\,,\,\tau_0)
\end{CD}
\]the right vertical map is injective by \cite{PSS}. So we have $p_2(h)=0\in W^{-1}(D_0,\,\tau_0)$. The exactness of the sequence
\eqref{eq3p2p1TEMP} implies that $[h]=\tilde{\rho}([h_0])$ for some hermitian form $h_0$ over $(D_0,\,\tau_0)$ of even rank.

Let $\lambda=\mathrm{Pf}(h_0)\in K^*/\Nrd(D_0^*)$ be the pfaffian norm of $h_0$. Since $h$ is locally hyperbolic, by considering Suresh's exact sequence locally, we see that $(h_0)_v$ lies in the image of $(\widetilde{\pi}_1)_v$ for every $v$. By Lemma$\;$\ref{lemma1TEMP}, this implies that $\lambda\in \Nrd((D_0)^*_v).N_{L_v/K_v}(L_v^*)$ for every $v$. In other words, the quadratic form
\[
\phi:=\lambda.n_{D_0}-\langle 1\,,\,-d \rangle\,,
\]where $n_{D_0}$ denotes the norm form of the quaternion algebra $D_0$, is isotropic over every $K_v$.
By the assumption on the Hasse principle for quadratic forms of rank 6 (and \cite[Thm.$\;$3.1]{CTPaSu} in the $p$-adic arithmetic case), $\phi$ is isotropic over $K$, which shows
$\lambda\in \Nrd(D_0^*).N_{L/K}(L^*)$. As was mentioned in the proof of Corollary$\;$\ref{coro3p4temp}, the field $K$ has $u$-invariant 8. So by Lemma$\;$\ref{lemma3TEMP}, we have $[h_0]\in \mathrm{Im}(\widetilde{\pi}_1)$.
Hence $[h]=\tilde{\rho}([h_0])=0\in W(D,\,\tau)$ as desired.

Consider next the general case where $\ind(D)$ is even and not divisible by 4. In this case we have $D=Q\otimes_LD'$ for some quaternion division algebra $Q$
over $L$ and some central division algebra $D'$ of odd index over $L$. By \cite[Lemma$\;$3.3.1]{BP1}, there is an odd degree separable extension $K'/K$ such that $D'\otimes_KK'=D'\otimes_LLK'$ is split. By Morita theory, there is a unitary $LK'/K'$-involution $\sigma$ on $H\otimes_LLK'$ and a hermitian form $f$
over $(H\otimes_LLK'\,,\,\sigma)$ such that the involution $\tau$ on $D\otimes_LLK'$ is adjoint to $f$, and moreover, the form $h_{K'}$ over $(D\otimes_LLK'\,,\,\tau)$ corresponds to a hermitian form $h'$ over $(H\otimes_LLK'\,,\,\sigma)$, which has even rank, trivial discriminant and trivial Rost invariant. The hypothesis that $h$ is locally hyperbolic over every $K_v$ implies that $h'$ is locally hyperbolic over every $K'_w$, where $w$ runs over the set of divisorial valuations of $K'$. By the previous case, $[h']=0\in W(H\otimes_LLK'\,,\,\sigma)$ and hence $[h]=0\in W(D\otimes_LLK'\,,\,\tau)$. Since the degree
$[LK': L]=[K': K]$ is odd, the natural map $W(D,\,\tau)\to W(D\otimes_LLK'\,,\,\tau)$ is injective by a theorem of Bayer-Fluckiger and Lenstra (cf. \cite[p.80,
Coro.$\;$6.18]{KMRT}). So we get
$[h]=0\in W(D,\,\tau)$. This completes the proof.
\end{proof}

\begin{thm}\label{thm3p3TEMP}
Let $K$ be the function field of a $p$-adic arithmetic surface or a local henselian surface with finite residue field of characteristic $p$. Let $L/K$ be a quadratic field extension, $(D,\,\tau)$ a central division algebra over $L$ with a unitary $L/K$-involution whose index $\mathrm{ind}(D)$ is square-free. Let $h$ be a nonsingular hermitian form over $(D,\,\tau)$.

Assume $p\neq 2$ if $\ind(D)$ is even. In the local henselian case, assume further that  $p\nmid \mathrm{ind}(D)$ and that the Hasse principle with respect to divisorial valuations holds for quadratic forms of rank $6$ over $K$.

Then the natural map
\[
H^1(K\,,\,\bfSU(h))\lra \prod_{v\in \Omega_A}H^1(K_v\,,\,\bfSU(h))
\]has trivial kernel.
\end{thm}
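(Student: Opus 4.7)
The proof follows the strategy of Theorem \ref{thm5p3temp}, with Proposition \ref{prop3p2TEMP} playing the role of the hermitian-form Hasse principle. Let $\xi\in H^1(K,\bfSU(h))$ be a class that is locally trivial at every $v\in\Omega_A$. Let $h'$ be a hermitian form over $(D,\tau)$ representing the image of $\xi$ in $H^1(K,\bfU(h))$; then $h$ and $h'$ have the same rank and discriminant, and they are locally isomorphic. The first part of the argument aims to prove that $h\cong h'$, i.e.\ that $f:=h\perp(-h')$ is hyperbolic, so that $\xi$ dies in $H^1(K,\bfU(h))$. The odd-index case is already covered by Theorem \ref{thm5p3temp}, so I concentrate on the case in which $\ind(D)$ is even; since $\ind(D)$ is square-free, we then have $\ind(D)/2$ odd, which matches the hypothesis of Proposition \ref{prop3p2TEMP}.

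To invoke Proposition \ref{prop3p2TEMP} on $f$, I must verify that $f$ has trivial Rost invariant $\mathscr{R}(f)$ in the sense of \ref{para3p1TEMP}. By definition, $\mathscr{R}(f)$ is the class modulo $\mathrm{Cores}_{L/K}((L^{*1})\cup (D))$ of $R_{\bfSU_{2n}(D,\tau)}(\zeta)$ for any lift $\zeta$ of $[f]\in H^1(K,\bfU_{2n}(D,\tau))$, where $2n=\rank(f)$. Imitating the computation in the proof of Theorem \ref{thm4p7temp}, one checks via the natural map $\bfSU(h)\to\bfSU_{2n}(D,\tau)$ (sending $\xi$ to such a lift of $[h\perp(-h')]$) that $\mathscr{R}(f)$ coincides with the class of $R_{\bfSU(h)}(\xi)$ in the appropriate quotient. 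Since $\xi$ is locally trivial, so is $R_{\bfSU(h)}(\xi)\in H^3(K,\mathbb{Q}/\mathbb{Z}(2))$; the hypotheses $p\neq 2$ and $p\nmid\ind(D)$ ensure that this Rost invariant lies in $H^3(K,\mu_N^{\otimes 2})$ for some $N$ prime to $p$, so Theorem \ref{thm1p3temp} forces $R_{\bfSU(h)}(\xi)=0$, and hence $\mathscr{R}(f)=0$. Proposition \ref{prop3p2TEMP} then yields $[f]=0\in W(D,\tau)$, so $h\cong h'$ and $\xi$ maps to the trivial class in $H^1(K,\bfU(h))$.

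To conclude, consider the exact sequence of algebraic $K$-groups
\[
1\lra \bfSU(h)\lra \bfU(h)\overset{\Nrd}{\lra} R^1_{L/K}\mathbb{G}_m\lra 1,
\]
which gives, upon passage to Galois cohomology, the commutative diagram with exact rows
\[
\begin{CD}
1 @>>> \frac{(R^1_{L/K}\mathbb{G}_m)(K)}{\Nrd(\bfU(h)(K))} @>\varphi>> H^1(K,\bfSU(h)) @>>> H^1(K,\bfU(h)) \\
&& @V\eta VV @VVV @VVV \\
1 @>>> \prod_{v\in\Omega_A}\frac{(R^1_{L/K}\mathbb{G}_m)(K_v)}{\Nrd(\bfU(h)(K_v))} @>>> \prod_{v\in\Omega_A} H^1(K_v,\bfSU(h)) @>>> \prod_{v\in\Omega_A} H^1(K_v,\bfU(h)).
\end{CD}
\]
Since the image of $\xi$ in $H^1(K,\bfU(h))$ is trivial, the exactness of the top row gives $\xi=\varphi(\theta)$ for some $\theta$ in the left-hand norm quotient; the local triviality of $\xi$ and exactness of the bottom row then force $\theta$ to be locally trivial. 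Lemma \ref{lemma5p2temp} gives $\theta=1$, hence $\xi=1$, as required.

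The main obstacle is the compatibility claim $\mathscr{R}(f)=[R_{\bfSU(h)}(\xi)]$ modulo $\mathrm{Cores}_{L/K}((L^{*1})\cup(D))$: this requires tracing carefully through the definition in \ref{para3p1TEMP} and the functorial behaviour of Rost invariants under the inclusion $\bfSU(h)\hookrightarrow\bfSU_{2n}(D,\tau)$ induced by the hyperbolic embedding $h\perp(-h)\cong H_{2n}$, in the style of the orthogonal computation recorded in \ref{para2p12NEW}. Once this is in place, the remaining ingredients---the $H^3$-Hasse principle, Proposition \ref{prop3p2TEMP}, and Lemma \ref{lemma5p2temp}---assemble routinely into the proof.
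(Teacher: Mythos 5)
Your proposal follows the paper's argument closely and is essentially correct. The only fine point worth noting is in your treatment of the Rost-invariant compatibility, which you flag yourself as the main obstacle. You claim that $\mathscr{R}(f)$ \emph{coincides} with the class of $R_{\bfSU(h)}(\xi)$ modulo the corestriction subgroup. The paper proves something weaker and simpler: for the homomorphism $\alpha\colon\bfSU(h)\to\bfSU(h\perp(-h))$, $f\mapsto(f,\mathrm{id})$, one has $R_{\bfSU(h\perp(-h))}(\alpha(\xi))=n_\alpha R_{\bfSU(h)}(\xi)$ for \emph{some} integer $n_\alpha$ (the Rost multiplier), and since $\alpha(\xi)$ lifts $[h'\perp(-h)]\in H^1(K,\bfU(h\perp(-h)))$, this gives $\mathscr{R}(f)=n_\alpha[R_{\bfSU(h)}(\xi)]$. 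That suffices because the argument proceeds in the opposite order from yours: the paper first uses local triviality of $\xi$ plus Theorem~\ref{thm1p3temp} to conclude $R_{\bfSU(h)}(\xi)=0$ outright, and only then reads off $\mathscr{R}(f)=0$, so the precise value of $n_\alpha$ is irrelevant. Your version requires verifying $n_\alpha=1$, which is a stronger statement than the proof needs and is not established in the paper; if you instead invoke the general Rost-multiplier property as the paper does, the obstacle you single out disappears. The remaining steps — the appeal to Proposition~\ref{prop3p2TEMP}, the norm-one exact sequence, and Lemma~\ref{lemma5p2temp} — agree with the paper's proof.
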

\begin{proof}
Let $\xi\in H^1(K\,,\,\bfSU(h))$ be a class that is locally trivial. Let the image of $\xi$ in $H^1(K,\,\bfU(h))$ correspond to a hermitian form
$h'$. The form $h'\bot (-h)$ has even rank, trivial discriminant and is locally hyperbolic. We claim that the Rost invariant $\mathscr{R}(h'\bot(-h))$
is trivial. Indeed, as $\xi$ is locally trivial, $R_{\bfSU(h)}(\xi)$ is locally trivial in $H^3(K_v,\,\mathbb{Q}/\mathbb{Z}(2))$ for every $v$. By Theorem$\;$\ref{thm1p3temp},
$R_{\bfSU(h)}(\xi)=0$. There is a group homomorphism
\[
\bfSU(h)\lra \bfSU(h\bot(-h))\,,\;\; f\longmapsto (f,\,\mathrm{id})
\]which induces a map
\[
\alpha\,:\;\;H^1(K,\,\bfSU(h))\lra H^1(K\,,\,\bfSU(h\bot(-h)))\,.
\]The image $\al(\xi)$ of $\xi$ lifts the class $[h'\bot (-h)]\in H^1(K,\,\bfU(h\bot(-h)))$.
By general property of the (usual) Rost invariant, there is an integer $n_{\al}$ such that
\[
R_{\bfSU(h\bot(-h))}(\al(\xi))=n_{\al}R_{\bfSU(h)}(\xi)\,.
\]We have thus $\mathscr{R}(h'\bot(-h))=R_{\bfSU(h\bot(-h))}(\al(\xi))=0$ since $\xi$ has trivial Rost invariant.
Now Proposition$\;$\ref{prop3p2TEMP} implies that the two forms $h', h$ over $(D,\,\tau)$ are isomorphic.

Consider the cohomology exact sequence
\begin{equation}\label{eq3p3p1TEMP}
1\lra \frac{R^1_{L/K}\mathbb{G}_m(K)}{\Nrd(\bfU(h)(K))}\overset{\varphi}{\lra}H^1(K,\,\bfSU(h))\lra H^1(K,\,\bfU(h))
\end{equation}arising from the exact sequence of algebraic groups
\[
1\lra \bfSU(h)\lra \bfU(h)\overset{\Nrd}{\lra}R^1_{L/K}\mathbb{G}_m\lra 1\,.
\]The fact that $h'\cong h$ implies that $\xi$ lies in the image of the map $\varphi$ in the above cohomology exact sequence \eqref{eq3p3p1TEMP}.
Considering the sequence \eqref{eq3p3p1TEMP} locally and using Lemma$\;$\ref{lemma5p2temp}, we conclude that
$\xi$ is trivial in $H^1(K,\,\bfSU(h))$, thus proving the theorem.
\end{proof}

\noindent \emph{Acknowledgements.} The author thanks Prof.\! Jean-Louis Colliot-Th\'{e}l\`{e}ne for helpful discussions.

\addcontentsline{toc}{section}{\textbf{References}}

\clearpage \thispagestyle{empty}

\end{document}